\newcommand{\mathbbm}{\mathds}
\newcommand{\ZZ}{\mathbb{Z}}
\newcommand{\RR}{\mathbb{R}}
\newcommand{\SL}{\operatorname{SL}}
\newcommand{\BCZ}{\operatorname{BCZ}}
\newcommand{\FTR}{\operatorname{FTR}}
\newcommand{\slope}{\operatorname{slope}}
\newcommand{\area}{\operatorname{area}}
\newcommand{\Unif}{\operatorname{Unif}}
\newcommand{\FareyStat}{\operatorname{FareyStat}}
\newcommand{\slopegap}
{\operatorname{slopegap}}
\newcommand{\SlopeGap}
{\operatorname{SlopeGap}}
\newcommand{\centdist}{\operatorname{centdist}}
\newcommand{\CentDist}{\operatorname{CentDist}}
\newtheorem{definition}{Definition}[section]
\newtheorem{remark}{Remark}[section]
\newtheorem{theorem}{Theorem}[section]
\newtheorem{proposition}{Proposition}[section]
\newtheorem{corollary}{Corollary}[section]
\newtheorem{lemma}{Lemma}[section]
\title{The Boca-Cobeli-Zaharescu Map Analogue for the Hecke Triangle Groups $G_q$}
\author{Diaaeldin Taha \\ University of Washintgon in Seattle, \href{mailto:dtaha@uw.edu}{dtaha@uw.edu}}
\begin{document}
\maketitle

\abstract{The Farey sequence $\mathcal{F}(Q)$ at level $Q$ is the sequence of irreducible fractions in $[0, 1]$ with denominators not exceeding $Q$, arranged in increasing order of magnitude. A simple ``next-term'' algorithm exists for generating the elements of $\mathcal{F}(Q)$ in increasing or decreasing order. That algorithm, along with a number of other properties of the Farey sequence, was encoded by F. Boca, C. Cobeli, and A. Zaharescu into what is now known as the Boca-Cobeli-Zaharescu (BCZ) map, and used to attack several problems that can be described using the statistics of subsets of the Farey sequence. In this paper, we derive the Boca-Cobeli-Zaharescu map analogue for the discrete orbits $\Lambda_q = G_q(1, 0)^T$ of the linear action of the Hecke triangle groups $G_q$ on the plane $\RR^2$ starting with a Stern-Brocot tree analogue for the said orbits (\cref{theorem: G_q BCZ maps}). We derive the next-term algorithm for generating the elements of $\Lambda_q$ in vertical strips in increasing order of slope, and present a number of applications to the statistics of $\Lambda_q$.}

\section{Introduction}

For any integer $Q \geq 1$, the \emph{Farey sequence at level $Q$} is the set
\begin{equation*}
\mathcal{F}(Q) = \{a/q \mid a,q \in \ZZ,\ 0 \leq a \leq q \leq Q,\ \gcd(a, q) = 1\}
\end{equation*}
of irreducible fractions in the interval $[0, 1]$ with denominators not exceeding $Q$, arranged in increasing order. The Farey sequence is one of the famous enumerations of the rationals, and its applications permeate mathematics. Some of the fundamental properties of $\mathcal{F}(Q)$ are the following:
\begin{enumerate}
\item If $a_1/q_1, a_2/q_2 \in \mathcal{F}(Q)$ are two consecutive fractions, then $0 < q_1, q_2 \leq Q$, and $q_1 + q_2 \geq Q$.
\item If $a_1/q_1, a_2/q_2 \in \mathcal{F}(Q)$ are two consecutive fractions, then they satisfy the \emph{Farey neighbor} identity $a_2 q_1 - a_1 q_2 = 1$.
\item If $a_1/q_1, a_2/q_2, a_3/q_3 \in \mathcal{F}(Q)$ are three consecutive fractions, then they satisfy the \emph{next-term} identities
\begin{equation*}
a_3 = k a_2 - a_1,
\end{equation*}
and
\begin{equation*}
q_3 = k q_2 - q_1
\end{equation*}
where $k = \left\lfloor\frac{Q + q_1}{q_2}\right\rfloor$.
\end{enumerate}
Around the turn of the new millennium, F. P. Boca, C. Cobeli, and A. Zaharescu \cite{Boca2001-he} encoded the above properties of the Farey sequence as the \emph{Farey triangle}
\begin{equation*}
\mathscr{T} := \{(a, b) \mid 0 < a, b \leq 1,\ a + b > 1\},
\end{equation*}
and what is now increasingly known as the Boca-Cobeli-Zaharescu (BCZ) map $T : \mathscr{T} \to \mathscr{T}$\footnote{In the remainder of this paper, we will denote the BCZ maps we compute for the Hecke triangle groups $G_q$ by $\BCZ_q$, reserving the symbols $T_q$ for particular generators of $G_q$.}
\begin{equation*}
T(a, b) := \left(b, -a + \left\lfloor\frac{1 + a}{b}\right\rfloor b \right)
\end{equation*}
which satisfies the property that
\begin{equation*}
T\left(\frac{q_1}{Q}, \frac{q_2}{Q}\right) = \left(\frac{q_2}{Q}, \frac{q_3}{Q}\right)
\end{equation*}
for any three consecutive fractions $a_1/q_1, a_2/q_2, a_3/q_3 \in \mathcal{F}(Q)$. Since then, quoting R. R. Hall, and P. Shiu \cite{Hall2003-it}, the aforementioned trio have ``made some very interesting applications'' of $\mathscr{T}$ and $T$ (and the weak convergence of particular measures on $\mathscr{T}$ supported on the orbits of $T$ to the Lebesgue probability measure $dm = 2 da db$) to the study of distributions related to Farey fractions.

Earlier in the current decade, J. Athreya, and Y. Cheung \cite{Athreya2013-ql} showed that the Farey triangle $\mathscr{T}$, the BCZ map $T : \mathscr{T} \to \mathscr{T}$, and the Lebesgue probability measure $dm = 2\,da\,db$ on $\mathscr{T}$ form a Poincar\'{e} section with roof function $R(a,b) = \frac{1}{ab}$ to the horocycle flow $h_s = \begin{pmatrix}1 & 0\\-s & 1\end{pmatrix}$, $s \in \RR$, on $X_2 = \SL(2,\RR)/\SL(2,\ZZ)$, with (a scalar multiple of) the Haar probability measure $\mu_2$ inherited from $\SL(2,\RR)$. Following that, analogues of the BCZ map have been computed for the golden L translation surface (whose $\SL(2, \RR)$ orbit corresponds to $\SL(2, \RR)/G_5$, where $G_5$ is the Hecke triangle group $(2, 5, \infty)$) by J. Athreya, J. Chaika, and S. Lelievre\footnote{By \cref{theorem: G_q BCZ maps}, we get for $q = 5$ the indices $k_2(a, b) = \left\lfloor\frac{1-(a+\varphi b)}{\varphi^2(a + b)}\right\rfloor$, $k_3(a, b) = \left\lfloor \frac{1-b}{\varphi(a+\varphi b)} \right\rfloor$, and $k_4(a, b) = \left\lfloor \frac{1+a}{\varphi b} \right\rfloor$. This corrects the indices given in theorem 3.1 of \cite{Athreya2015-nq}.} in \cite{Athreya2015-nq}, and later for the regular octagon by C. Uyanick, and G. Work in \cite{Uyanik2016-gx}. In both cases, the sought for application was determining the slope gap distributions for the holonomy vectors of the golden L and the regular octagon. Soon after, B. Heersink \cite{Heersink2016-hg} computed the BCZ map analogues for finite covers of $\SL(2, \RR)/\SL(2, \ZZ)$ using a process developed by A. M. Fisher, and T. A. Schmidt \cite{Fisher2014-ke} for lifting Poincar\'{e} sections of the geodesic flow on $\SL(2, \RR)/SL(2, \ZZ)$ to covers of thereof. In that case, the sought for application was studying statistics of various subsets of the Farey sequence.

In this paper, we derive the BCZ map analogue for the Hecke triangle groups $G_q$, $q \geq 3$, which are the subgroups of $\SL(2, \RR)$ with generators
\begin{equation*}
S := \begin{pmatrix}0 & -1 \\ 1 & 0\end{pmatrix}, \text{ and } T_q := \begin{pmatrix}1 & \lambda_q \\ 0 & 1\end{pmatrix},
\end{equation*}
where $\lambda_q := 2 \cos\left(\frac{\pi}{q}\right) \geq 1$. Along the way, we investigate the discrete orbits
\begin{equation*}
\Lambda_q = G_q(1, 0)^T
\end{equation*}
of the linear action of $G_q$ on the plane $\RR^2$, and present some results on the geometry of numbers, Diophantine properties, and statistics of $\Lambda_q$. Our starting point is showing that the orbits $\Lambda_q$ have a tree structure that extend the famous Stern-Brocot trees for the rationals. The said trees were studied a bit earlier by C. L. Lang and M. L. Lang in \cite{Lang2016-qs}, though their focus was on the M\"{o}bius action of $G_q$ on the hyperbolic plane.

\begin{figure}
\centering
\includegraphics[scale=0.7]{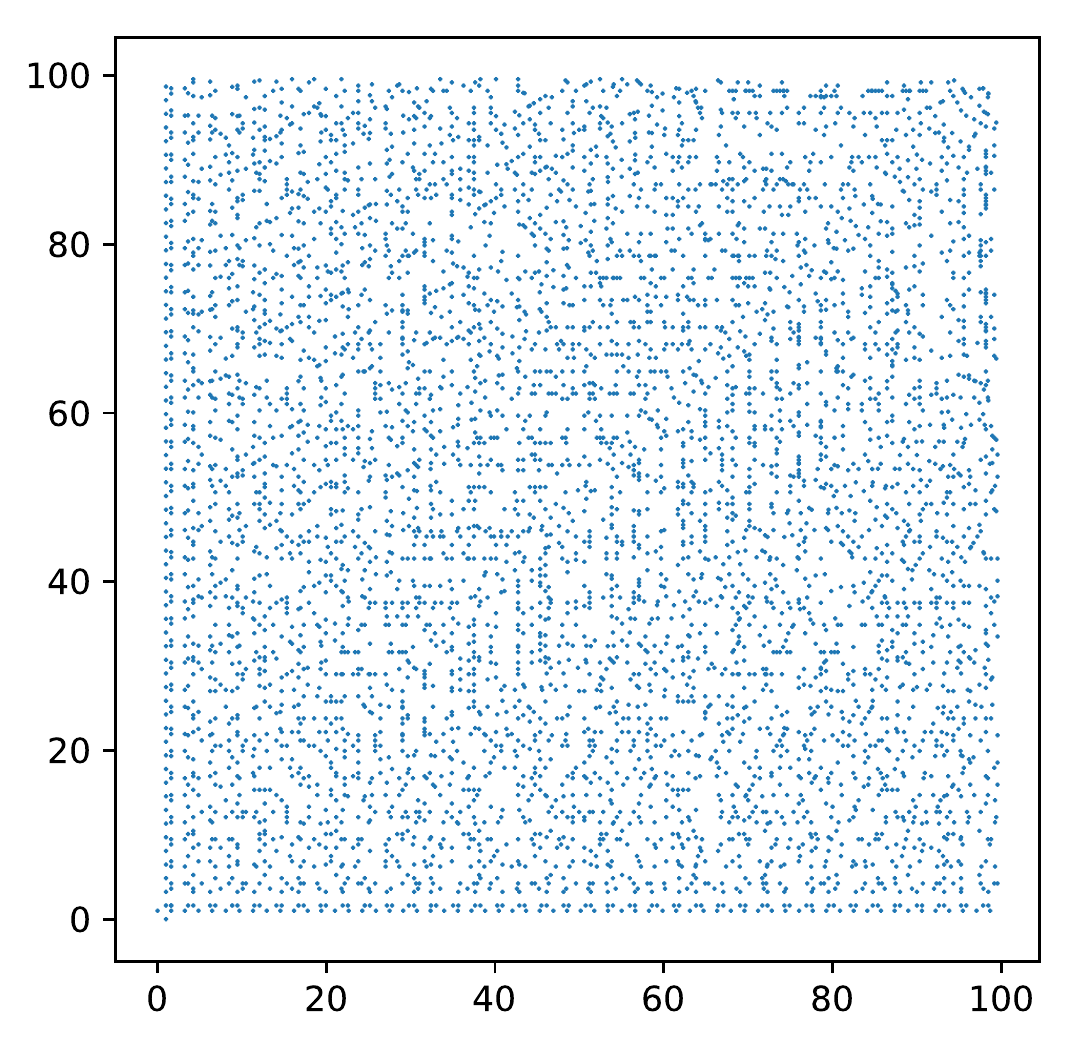}
\caption{The elements of $\Lambda_5$ in the square $[0, 100]^2$ generated using \cref{theorem: next term algorithm} and \cref{remark: practical next term algorithm}.}
\label{figure: G_5 vectors}
\end{figure}

An earlier version of this paper was announced in October 2018 under the title ``The Golden L Ford Circles'', which only considered $G_5$ and its Ford circles. An excellent paper \cite{Davis2018-al} by D. Davis and S. Lelievre that investigates the $G_5$-Stern-Brocot tree as a tool for studying the periodic paths on the pentagon, double pentagon, and golden L surfaces was announced at the same time. We strongly recommend the aforementioned paper as a more geometrically flavored application of the said trees.

\subsection{Organization}

This paper is organized as follows:

\begin{itemize}
\item In \cref{section: discrete orbits}, we characterize and study the discrete orbits of the linear action of $G_q$ on the plane $\RR^2$ (\cref{proposition: column - unimodular pair identification}), show that those discrete orbits have a tree structure analogous to the Stern-Brocot trees for the rationals (\cref{theorem: G_q Stern Brocot process is well-defined and exhaustive}), and derive the Boca-Cobeli-Zaharescu map analogues for $G_q$ (\cref{theorem: G_q BCZ maps}). We also characterize the periodic points for the $G_q$-BCZ map analogues (\cref{corollary: characterizing BCZ periodic points}), and present an algorithm for generating the elements of $\Lambda_q$ in increasing order of slope (\cref{theorem: next term algorithm}). We also collect some consequences of the existence of $G_q$-Stern-Brocot trees that we use throughout the paper in \cref{corollary: odds and ends}.
\item In \cref{section: cross section}, we give the Poincar\'{e} cross sections to the horocycle flow on the quotients $\SL(2, \RR)/G_q$ corresponding to the $G_q$ BCZ map analogues we have in section 2 (\cref{theorem: cross section}). As a consequence, we get an equidistribution result (\cref{theorem: weak limit and asymptotic growth}) that we use for the applications in \cref{section: applications}.
\item In \cref{section: applications}, we present a number of applications of the results in this paper to the statistics of subsets of $\Lambda_q$. In particular, we give the main asymptotic term for the number of elements of $\Lambda_q$ in homothetic dilations of triangles (\cref{proposition: counting in triangles}), equidistribution of homothetic dilations $\frac{1}{\tau}\Lambda_q$ in the square $[-1, 1]^2$ as $\tau \to \infty$ (\cref{corollary: equidistribution in the square}), the slope gap distribution for the elements of $\Lambda_q$ (\cref{corollary: limiting distribution of slopegap}), and the distribution of the Euclidean distance between successive $G_q$-Ford circles (\cref{corollary: limiting distribution of centdist}). We also get a weak form of the Dirichelet approximation theorem for $\Lambda_q$ for free (\cref{proposition: weak Dirichelet approximation}).
\end{itemize}

\subsection{Notation}

As is customary when working with the groups $G_q$, we write
\begin{equation*}
U_q := T_q S = \begin{pmatrix}\lambda_q & -1 \\ 1 & 0\end{pmatrix}.
\end{equation*}
The matrix $U_q$ is conjugate to a rotation with angle $\pi/q$, and can be easily seen to preserve the quadratic form
\begin{equation*}
  Q_q((x,y)^T) = x^2 - \lambda_q xy + y^2
\end{equation*}
when $U_q$ acts linearly on the plane $\RR^2$.

The main object that we study in this paper is the orbit of the vector $(1, 0)^T \in \RR^2$ under the linear action of $G_q$ on the plane
\begin{equation*}
\Lambda_q = G_q(1, 0)^T.
\end{equation*}
The set $\Lambda_q$ is symmetric against the lines $y = \pm x$, $x = 0$, and $y = 0$ since $G_q$ contains $S^3 T_q^{-1} S = T_q^T$, $S^3 = S^T$, and $(T_qS)^q = -\operatorname{Id}_2$.

Of special significance to us are the elements
\begin{equation*}
\mathfrak{w}_i^q = (x_i^q, y_i^q) = U_q^i (1, 0)^T,
\end{equation*}
where $i = 0 , 1, \cdots, 2 q - 1$. Note that $\mathfrak{w}_0^q = (1, 0)^T$, $\mathfrak{w}_1^q = (\lambda_q, 1)^T$, $\mathfrak{w}_{q-2}^q = (1, \lambda_q)^T$, $\mathfrak{w}_{q-1}^q = (0, 1)^T$, and $\mathfrak{w}_q^q = (-1, 0)^T$. (Since $U_q$ is conjugate to a $\pi/q$-rotation, $U_q^q = -\operatorname{Id}_2$. This gives the last equality.) Moreover, the vectors $\{\mathfrak{w}_i^q\}_{i = 0}^{2q - 1}$ lie on the ellipse $Q_q((x, y)^T) = x^2 - \lambda_q x y + y^2 = 1$.

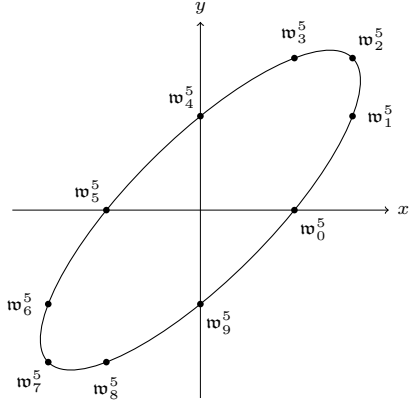
\begin{figure}
\centering
\begin{tikzpicture}[scale=1.25]
\draw[->,ultra thin] (-2,0)--(2,0) node[right]{$\scriptstyle x$};
\draw[->,ultra thin] (0,-2)--(0,2) node[above]{$\scriptstyle y$};
\draw[rotate=45] (0,0) ellipse (2.289cm and 0.743cm);
\fill (1,0) circle[radius=1pt];
\fill (1.618,1) circle[radius=1pt];
\fill (1.618,1.618) circle[radius=1pt];
\fill (1,1.618) circle[radius=1pt];
\fill (0,1) circle[radius=1pt];
\fill (-1,0) circle[radius=1pt];
\fill (-1.618,-1) circle[radius=1pt];
\fill (-1.618,-1.618) circle[radius=1pt];
\fill (-1,-1.618) circle[radius=1pt];
\fill (0,-1) circle[radius=1pt];
\node at (1+0.2,0-0.2) {$\scriptstyle \mathfrak{w}_0^5$};
\node at (1.618+0.3,1) {$\scriptstyle \mathfrak{w}_1^5$};
\node at (1.618+0.2,1.618+0.2) {$\scriptstyle \mathfrak{w}_2^5$};
\node at (1,1.618+0.275) {$\scriptstyle \mathfrak{w}_3^5$};
\node at (0-0.2,1+0.2) {$\scriptstyle \mathfrak{w}_4^5$};
\node at (-1-0.2,0+0.2) {$\scriptstyle \mathfrak{w}_5^5$};
\node at (-1.618-0.3,-1) {$\scriptstyle \mathfrak{w}_6^5$};
\node at (-1.618-0.2,-1.618-0.2) {$\scriptstyle \mathfrak{w}_7^5$};
\node at (-1,-1.618-0.275) {$\scriptstyle \mathfrak{w}_8^5$};
\node at (0+0.2,-1-0.2) {$\scriptstyle \mathfrak{w}_9^5$};
\end{tikzpicture}

\caption{The vectors $\{\mathfrak{w}_i^5\}_{i=0}^9$ along with the ellipse $Q_5((x, y)^T) = x^2 - \lambda_5 x y + y^2 = 1$. Note that $\lambda_5$ is the golden ratio $\varphi = (1+\sqrt{5})/2$.}
\end{figure}

Given two vectors $\mathbf{u}_0 = (x_0, y_0)^T, \mathbf{u}_1 = (x_1, y_1)^T \in \RR^2$, we denote their \emph{(scalar) wedge product} by
\begin{equation*}
  \mathbf{u}_0 \wedge \mathbf{u}_1 = x_0 y_1 - x_1 y_0,
\end{equation*}
and their \emph{dot product} by
\begin{equation*}
  \mathbf{u}_0 \cdot \mathbf{u}_1 = x_0 x_1 + y_0 y_1.
\end{equation*}
One useful inequality that we use more than once in this paper is that if $\mathbf{u}_0, \mathbf{u}_1, \mathbf{v}$ are non-zero vectors in $\RR^2$, with the angle $\angle \mathbf{u}_0 \mathbf{u}_1$ not exceeding $\pi/2$, and $\mathbf{v}$ belonging to the sector $(0, \infty)\mathbf{u}_0 + (0, \infty)\mathbf{u}_1 = \{\alpha \mathbf{u}_0 + \beta \mathbf{u}_1 \mid \alpha, \beta > 0\}$, then
\begin{equation}
\label{equation: sin inequalities}
0 < \frac{\mathbf{u}_0 \wedge \mathbf{v}}{\|\mathbf{u}_0\| \|\mathbf{v}\| }, \frac{\mathbf{v} \wedge \mathbf{u}_1}{\|\mathbf{v}\| \|\mathbf{u}_1\|} < \frac{\mathbf{u}_0 \wedge \mathbf{u}_1}{\|\mathbf{u}_0\| \|\mathbf{u}_1\|}.
\end{equation}
This follows from the identities $\mathbf{u}_0 \wedge \mathbf{v} = \|u_0\| \|\mathbf{v}\| \sin(\angle \mathbf{u}_0 \mathbf{v})$, $\mathbf{v} \wedge \mathbf{u}_1 = \|\mathbf{v}\| \|\mathbf{u}_1\| \sin(\angle\mathbf{v}\mathbf{u}_1)$, and $\mathbf{u}_0 \wedge \mathbf{u}_1 = \|\mathbf{u}_0\| \|\mathbf{u}_1\| \sin(\angle\mathbf{u}_0\mathbf{u}_1)$, in addition to the inequalities $\sin(\angle\mathbf{u}_0\mathbf{v}), \sin(\angle\mathbf{v}\mathbf{u}_1) < \sin(\angle\mathbf{u}_0\mathbf{u}_1)$. Finally, we say that the two vectors $\mathbf{u}_0, \mathbf{u}_1 \in \RR^2$ are \emph{unimodular} if $\mathbf{u}_0 \wedge \mathbf{u}_1 = 1$. For readability, we sometimes will denote the usual product on $\RR$ by $\times$. So $2 \times 3 = 6$, and so on.

Finally, we write
\begin{equation*}
h_s := \begin{pmatrix}1 & 0 \\ -s & 1\end{pmatrix},
\end{equation*}
for $s \in \RR$,
\begin{equation*}
s_\tau = \begin{pmatrix}\tau & 0 \\ 0 & \tau^{-1}\end{pmatrix},
\end{equation*}
for $\tau > 0$, and
\begin{equation*}
g_{a, b} = \begin{pmatrix}a & b \\ 0 & a^{-1}\end{pmatrix},
\end{equation*}
for $a > 0$, and $b \in \RR$. The above matrices satisfy the identities $g_{\tau, 0} = s_\tau$, $h_s h_t = h_{s + t}$, and $h_s s_\tau = s_\tau h_{s\tau^2}$.

\section{The Discrete Orbits, Stern-Brocot Trees, and Boca-Cobeli-Zaharescu Map Analogue for $\Lambda_q = G_q(1, 0)^T$}
\label{section: discrete orbits}

\subsection{The Discrete Orbits of the Linear Action of $G_q$ on the Plane $\RR^2$}

\begin{proposition}
\label{proposition: column - unimodular pair identification}
The following are true.
\begin{enumerate}
\item If the orbit of $\mathbf{u} \in \RR^2$ under the linear action of $G_q$ is a discrete subset of $\RR^2$, then either $\mathbf{u} = (0, 0)^T$, or $G_q \mathbf{u}$ is a homothetic dilation of $\Lambda_q =  G_q(1,0)^T$.

\item The ellipse $Q_q((x, y)^T) = x^2 - \lambda_q x y + y^2 = 1$ does not contain any elements of $\Lambda_q$ in its interior.

\item The elements $\mathfrak{w}_0^q, \mathfrak{w}_1^q, \cdots, \mathfrak{w}_{q-1}^q$ of $\Lambda_q$ satisfy the \emph{Farey neighbor identities}
\begin{equation*}
\mathfrak{w}_i^q \wedge \mathfrak{w}_{i+1}^q = 1,
\end{equation*}
for $i = 0, 1, \cdots, q - 2$, in addition to
\begin{equation*}
\mathfrak{w}_0^q \wedge \mathfrak{w}_{q-1}^q = 1.
\end{equation*}

\item If $\mathbf{u}_0, \mathbf{u}_1 \in \Lambda_q$ are two unimodular vectors (i.e $\mathbf{u}_0 \wedge \mathbf{u}_1 = 1$), then there exists $A \in G_q$ such that $A\mathbf{u}_0 = \mathfrak{w}_0^q = (1, 0)^T$ and $A\mathbf{u}_1 = \mathfrak{w}_{q-1}^q = (0, 1)^T$. That is, the pairs of unimodular vectors of $\Lambda_q$ are in a one-to-one correspondence with the columns of the matrices in $G_q$.

\end{enumerate}
\end{proposition}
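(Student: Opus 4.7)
The plan is to handle the four assertions roughly in the order \emph{3, 1, 2, 4}, since the last two build on what precedes. Part 3 is essentially a one-line computation: writing $\mathfrak{w}_{i+1}^q = U_q\mathfrak{w}_i^q$ and expanding $U_q\mathbf{v}$ directly gives the identity $\mathbf{v}\wedge U_q\mathbf{v} = Q_q(\mathbf{v})$ for any $\mathbf{v}\in\RR^2$; combining this with the $U_q$-invariance of $Q_q$ noted in the notation section yields
\begin{equation*}
  \mathfrak{w}_i^q \wedge \mathfrak{w}_{i+1}^q \;=\; Q_q(\mathfrak{w}_i^q) \;=\; Q_q\bigl(U_q^i(1,0)^T\bigr) \;=\; Q_q((1,0)^T) \;=\; 1
\end{equation*}
for $i = 0, 1, \ldots, q-2$, while the closing identity $\mathfrak{w}_0^q \wedge \mathfrak{w}_{q-1}^q = (1,0)\wedge(0,1) = 1$ is immediate from the tabulated values.

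For part 1, I would invoke the dichotomy for lattice actions on the punctured plane. Identifying $\RR^2\setminus\{0\}$ with $\SL(2,\RR)/N_\mathbf{u}$ (where $N_\mathbf{u}$ is the one-parameter unipotent $\SL(2,\RR)$-stabilizer of $\mathbf{u}$), the $G_q$-orbit of $\mathbf{u}$ is discrete if and only if $G_q\cap N_\mathbf{u}$ is nontrivial, equivalently $\mathbf{u}$ is fixed by some nonidentity parabolic element of $G_q$. Since $G_q$ is a Hecke triangle group whose single cusp is $\infty$ with cusp stabilizer $\langle T_q,-\operatorname{Id}_2\rangle$, every parabolic element of $G_q$ is a $G_q$-conjugate of a power of $T_q$, and its fixed line in $\RR^2$ lies in $G_q\cdot\RR\cdot(1,0)^T$. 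This forces $\mathbf{u}$ to be a nonzero real multiple of an element of $\Lambda_q$, and hence $G_q\mathbf{u}$ is a homothetic dilation of $\Lambda_q$.

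The main work is part 2, which I attack by a descent argument on the sector geometry supplied by part 3. Since $\Lambda_q$ is discrete (by part 1) and $Q_q$ is positive definite (its discriminant is $\lambda_q^2 - 4 < 0$ for $q \geq 3$), the minimum $m = \min_{\mathbf{v}\in\Lambda_q} Q_q(\mathbf{v})$ is achieved at some $\mathbf{v}^\ast$. The $2q$ closed sectors $S_i = [0,\infty)\mathfrak{w}_i^q + [0,\infty)\mathfrak{w}_{i+1}^q$ (indices mod $2q$) cover $\RR^2$, and $U_q$ cyclically permutes them while preserving $Q_q$, so after replacing $\mathbf{v}^\ast$ by $U_q^{-i}\mathbf{v}^\ast$ I may assume $\mathbf{v}^\ast \in S_0$. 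If $\mathbf{v}^\ast$ lies on the positive $x$-axis, writing $\mathbf{v}^\ast = A(1,0)^T$ forces $A$ to fix $\infty$ in the M\"obius action, so $A\in\langle T_q,-\operatorname{Id}_2\rangle$; this pins down $\mathbf{v}^\ast = \mathfrak{w}_0^q$ with $Q_q = 1$. The other bounding ray is handled by first applying $U_q^{-1}$ to reduce to this case. Otherwise $\mathbf{v}^\ast = \alpha\mathfrak{w}_0^q + \beta\mathfrak{w}_1^q$ with $\alpha,\beta > 0$; since $T_q^{-1}\mathfrak{w}_0^q = \mathfrak{w}_0^q$ and $T_q^{-1}\mathfrak{w}_1^q = (0,1)^T$, we have $T_q^{-1}\mathbf{v}^\ast = (\alpha,\beta)^T \in \Lambda_q$, and expanding $Q_q$ in both the $(\mathfrak{w}_0^q,\mathfrak{w}_1^q)$-basis and the standard basis yields
\begin{equation*}
  Q_q(\mathbf{v}^\ast) \;=\; \alpha^2 + \lambda_q\alpha\beta + \beta^2 \;>\; \alpha^2 - \lambda_q\alpha\beta + \beta^2 \;=\; Q_q(T_q^{-1}\mathbf{v}^\ast),
\end{equation*}
contradicting minimality. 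Hence $m = 1$.

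Finally, part 4 follows cleanly from part 2. Given unimodular $\mathbf{u}_0,\mathbf{u}_1\in\Lambda_q$, pick $B\in G_q$ with $B\mathbf{u}_0 = (1,0)^T$; then $(1,0)^T\wedge B\mathbf{u}_1 = \mathbf{u}_0\wedge\mathbf{u}_1 = 1$ forces $B\mathbf{u}_1 = (x,1)^T$ for some $x\in\RR$. Choose $n\in\ZZ$ so that $b := x - n\lambda_q \in [0,\lambda_q)$ and set $A := T_q^{-n}B \in G_q$, giving $A\mathbf{u}_0 = (1,0)^T$ and $A\mathbf{u}_1 = (b,1)^T \in \Lambda_q$. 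If $b \in (0,\lambda_q)$, then $Q_q((b,1)^T) = 1 + b(b-\lambda_q) < 1$ contradicts part 2; hence $b = 0$, and $A$ is the sought element. The main obstacle in the whole proof is the descent step for part 2: it hinges on the observation that $T_q^{-1}$ is precisely the $G_q$-element implementing the change of basis from $(\mathfrak{w}_0^q,\mathfrak{w}_1^q)$ to the standard basis, which is ultimately why the elementary inequality between $\alpha^2 + \lambda_q\alpha\beta + \beta^2$ and $\alpha^2 - \lambda_q\alpha\beta + \beta^2$ descends inside $\Lambda_q$.
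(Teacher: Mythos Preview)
Your proof is correct, but it routes through the four parts differently than the paper does.

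For part 1, the paper gives an explicit, self-contained descent: rotate into the sector $\Sigma_0^q$ by a power of $U_q$, then apply $T_q^{-1}$ to strictly decrease $Q_q$ unless the vector already lies on the $x$-axis; discreteness forces termination on a multiple of $(1,0)^T$. You instead invoke the horocycle-orbit dichotomy (closed versus dense $N$-orbits on $G_q\backslash\SL(2,\RR)$) together with the fact that $G_q$ has a single cusp with stabilizer $\langle T_q,-\operatorname{Id}_2\rangle$. This is valid, but it imports Hedlund/Dani and the cusp structure of Hecke groups as black boxes, whereas the paper's argument is entirely elementary and simultaneously sets up the machinery for part 2.

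For part 2, you essentially rediscover the paper's descent: your minimizer argument with the key inequality $Q_q(\alpha\mathfrak{w}_0^q+\beta\mathfrak{w}_1^q)=\alpha^2+\lambda_q\alpha\beta+\beta^2>\alpha^2-\lambda_q\alpha\beta+\beta^2=Q_q(T_q^{-1}(\alpha\mathfrak{w}_0^q+\beta\mathfrak{w}_1^q))$ is exactly the mechanism the paper uses in its proof of part 1, and the paper then deduces part 2 in one sentence from that same descent. So the descent idea appears in both proofs, just attached to different parts.

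Parts 3 and 4 are essentially the same in both. Your identity $\mathbf{v}\wedge U_q\mathbf{v}=Q_q(\mathbf{v})$ is a pleasant variant of the paper's $\det(U_q^i)$ computation. Your part 4 is actually cleaner than the paper's: you make the use of part 2 explicit via $Q_q((b,1)^T)=1+b(b-\lambda_q)<1$ for $b\in(0,\lambda_q)$, whereas the paper appeals to the positions of $\mathfrak{w}_1^q$ and $\mathfrak{w}_{q-1}^q$ on the ellipse in a way that implicitly uses the same inequality.
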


\begin{proof}
For the first claim: Assume without loss of generality that $\mathbf{u} \neq (0, 0)^T$. Let $\Sigma_i^q = (0,\infty) \mathfrak{w}_i^q + [0,\infty) \mathfrak{w}_{i+1 \mod 2q}^q = \{\alpha \mathfrak{w}_i^q + \beta \mathfrak{w}_{i+1 \mod 2q}^q \mid \alpha, \beta > 0\}$, $i = 0, 1, \cdots, 2q - 1$, be the radial sectors of $\RR^2 \setminus \{(0, 0)^T\}$ defined by the directions $\{\mathfrak{w}_i^q\}_{i=0}^{2q-1}$. Note that the matrix $U_q$ bijectively maps each sector $\Sigma_i^q$ to the sector $\Sigma_{i + 1 \mod 2q}^q$ for $i = 0, 1, \cdots, 2q-1$, and maintains the values of the quadratic form $Q_q$ at each point. Also, $T_q^{-1}$ maps the sector $\Sigma_0^q$ to $\cup_{i=0}^{q-2} \Sigma_i = [0,\infty)(1,0)^T + (0, \infty)(1,0)^T$, decreasing the $Q_q$-values of all the points in the interior of $\Sigma_0$, and fixing all the points on the ray in the direction of $\mathfrak{w}_0^q = (1,0)^T$. (This follows from $T_q^{-1} \mathfrak{w}_0^q = \mathfrak{w}_0^q = (1, 0)^T$, and $T_q^{-1}\mathfrak{w}_1^q = \mathfrak{w}_{q-1}^q = (0,1)^T$.) Starting with the vector $\mathbf{u}$ whose $G_q$-orbit is being considered, we repeatedly apply the following process:
\begin{enumerate}
\item If $\mathbf{u} \in \Sigma_i^q$ for some $1 \leq i \leq 2q - 1$, then replace $\mathbf{u}$ with $U_q^{-i} \mathbf{u} \in \Sigma_0^q$. This maintains the $Q_q$-value of $\mathbf{u}$.

\item Replace $\mathbf{u} \in \Sigma_0^q$ with $T_q^{-1}\mathbf{u} \in \cup_{i=0}^{q-2}\Sigma_i^q$. This fixes $\mathbf{u}$ if it lies on the ray in the direction of $\mathfrak{w}_0^q = (1, 0)^T$, and otherwise reduces the $Q_q$-value of $\mathbf{u}$.
\end{enumerate}
After each iteration of this process, either the point $\mathbf{u}$ lands on the line $y = 0$ and is fixed by further applications of the process, or is mapped to another point in $G_q\mathbf{u}$ with a strictly smaller $Q_q$-value. By the discreteness of $G_q\mathbf{u}$, the point $\mathbf{u}$ will eventually land on the line $y = 0$. This implies that there exists a non-zero $\alpha \in \RR$ such that $\mathbf{u} \in \alpha G_q (1, 0)^T$, from which follows that $G_q\mathbf{u} = \alpha \Lambda_q$. This proves the first claim.

The second claim follows from the fact that $(1, 0)^T \in \Lambda_q$ lies on the ellipse $Q_q((x, y)^T) = x^2 - \lambda_q x y + y^2 = 1$. No point in $\Lambda_q$ can have a $Q_q$-value smaller than $1$, as the iterative process used above will produce an element of $\Lambda_q$ that is parallel to $(1, 0)$ and shorter than it, which cannot happen by the discreteness of $\Lambda_q$.

For the third claim: we have for all $i = 0, 1, \cdots, q - 2$ that
\begin{equation*}
\mathfrak{w}_i^q \wedge \mathfrak{w}_{i+1}^q = (U_q^i \mathfrak{w}_0^q) \wedge (U_q^i \mathfrak{w}_1^q) = \det(U_q^i) \times \mathfrak{w}_0^q \wedge \mathfrak{w}_1^q = 1 \times (1,0)^T \wedge (\lambda_q, 1)^T = 1.
\end{equation*}
We also have that
\begin{equation*}
\mathfrak{w}_0^q \wedge \mathfrak{w}_{q-1}^q = (1,0)^T \wedge (0, 1)^T = 1.
\end{equation*}

For the fourth claim: By definition, there exists $B \in G_q$ such that $B\mathbf{u}_0 = (1, 0)^T$. Acting by $B^{-1}$, the two vectors $\widetilde{\mathbf{u}}_0 = B^{-1}\mathbf{u}_0 = (1, 0)^T$ and $\widetilde{\mathbf{u}}_1 = B^{-1}\mathbf{u}_1$ satisfy
\begin{equation*}
\widetilde{\mathbf{u}}_0 \wedge \widetilde{\mathbf{u}}_1 = \det(B^{-1}) \times \mathbf{u}_0 \wedge \mathbf{u}_1 = 1.
\end{equation*}
If $\widetilde{\mathbf{u}}_1 = (x, y)$, then $y = 1$. Shearing by $T_q$, we have $T_q^n\widetilde{\mathbf{u}}_0 = \widetilde{\mathbf{u}}_0$, and $T_q^n \widetilde{\mathbf{u}}_1 = (x + n \lambda_q, 1)^T$ for all $n \in \ZZ$. Since $\mathfrak{w}_1^q = (\lambda_q, 1)^T$ and $\mathfrak{w}_{q-1}^q = (0,1)^T$ are two elements of $\Lambda_q$ on the ellipse $Q_q = 1$, are at height $y = 1$, are a horizontal distance $\lambda_q$ away from each other, and $T_q^{-1}\mathfrak{w}_0^q = \mathfrak{w}_{q-1}^q$, then there exists $n_0 \in \ZZ$ such that $T_q^{n_0} \widetilde{\mathbf{u}}_1 = \mathfrak{w}_{q-1}^q$. Now, taking $A = T_q^{n_0}B^{-1}$ proves the claim.
\end{proof}

\subsection{The Stern-Brocot Trees for $\Lambda_q = G_q(1,0)^T$}

\begin{definition}
We refer to the process of iteratively replacing a pair of vectors $\mathbf{u}_0, \mathbf{u}_1 \in \Lambda_q$ that are unimodular (i.e. $\mathbf{u}_0 \wedge \mathbf{u}_1 = 1$) with the vectors
\begin{equation*}
x_0^q \mathbf{u}_0 + y_0^q \mathbf{u}_1 = \mathbf{u}_0, x_1^q \mathbf{u}_0 + y_1^q \mathbf{u}_1, \cdots, x_{q-2}^q \mathbf{u}_0 + y_{q-2}^q \mathbf{u}_1, x_{q-1}^q \mathbf{u}_0 + y_{q-1}^q \mathbf{u}_1 = \mathbf{u}_1
\end{equation*}
as the \emph{$G_q$-Stern-Brocot process}. We refer to the vectors $\{x_i^q \mathbf{u}_0 + y_i^q \mathbf{u}_1\}_{i=1}^{q-2}$ as the \emph{($G_q$-Stern-Brocot) children} of $\mathbf{u}_0, \mathbf{u}_1$, and successive children of the children of $\mathbf{u}_0, \mathbf{u}_1$ as the \emph{($G_q$-Stern-Brocot) grandchildren} of $\mathbf{u}_0, \mathbf{u}_1$.
\end{definition}

\begin{theorem}
\label{theorem: G_q Stern Brocot process is well-defined and exhaustive}
Let $\mathbf{u}_0, \mathbf{u}_1 \in \Lambda_q$ be two unimodular vectors (i.e. $\mathbf{u}_0 \wedge \mathbf{u}_1 = 1$). The $G_q$-Stern-Brocot process applied to $\mathbf{u}_0$ and $\mathbf{u}_1$ generates a well-defined tree of elements of $\Lambda_q$, and exhausts the elements of $\Lambda_q$ in the sector $[0,\infty)\mathbf{u}_0 + [0,\infty)\mathbf{u}_1 = \{\alpha \mathbf{u}_0 + \beta \mathbf{u}_1 \mid \alpha, \beta \geq 0\}$.
\end{theorem}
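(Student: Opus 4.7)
The plan is to treat both assertions --- well-definedness of the tree and exhaustion of $\Lambda_q$ in the sector --- by first using \cref{proposition: column - unimodular pair identification}(4) to find $A \in G_q$ with $A\mathbf{u}_0 = \mathfrak{w}_0^q = (1,0)^T$ and $A\mathbf{u}_1 = \mathfrak{w}_{q-1}^q = (0,1)^T$. Since $A^{-1} \in G_q$ is a linear bijection of $\Lambda_q$ onto itself which sends the closed first quadrant onto the sector $[0,\infty)\mathbf{u}_0 + [0,\infty)\mathbf{u}_1$ and the children of the canonical pair onto the children of $(\mathbf{u}_0, \mathbf{u}_1)$, it suffices to prove the theorem for the canonical pair $(\mathfrak{w}_0^q, \mathfrak{w}_{q-1}^q)$.

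Well-definedness in the canonical case is essentially immediate from earlier pieces: the children $\mathfrak{w}_1^q, \ldots, \mathfrak{w}_{q-2}^q$ lie in the open first quadrant and in $\Lambda_q$ by definition, and consecutive pairs $(\mathfrak{w}_i^q, \mathfrak{w}_{i+1}^q)$ are themselves unimodular by \cref{proposition: column - unimodular pair identification}(3). Recursing into each sub-pair, \cref{proposition: column - unimodular pair identification}(4) again reduces to the canonical setup, and the resulting tree is built up consistently with all vertices being \emph{bona fide} elements of $\Lambda_q$.

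For exhaustion I plan to strong-induct on the ``reduction depth'' $\rho(\mathbf{v})$ of $\mathbf{v} \in \Lambda_q$ in the closed first quadrant, defined as the number of applications of $T_q^{-1}$ used by the deterministic $Q_q$-minimization algorithm in the proof of \cref{proposition: column - unimodular pair identification}(1) when reducing $\mathbf{v}$ to $(1,0)^T$. The base case $\rho(\mathbf{v}) = 0$ covers the $\mathbf{v}$ whose reduction uses only $U_q^{-j}$'s; these are precisely $\mathbf{v} = \mathfrak{w}_j^q$ for $j \in \{0, \ldots, q-1\}$, each of which already appears as the root or as a first-level child. Boundary-ray cases $\mathbf{v} = c\mathfrak{w}_j^q$ with $c>0$ collapse into this case because \cref{proposition: column - unimodular pair identification}(2) forces $c=1$: otherwise $c^{-1}(1,0)^T = c^{-1}U_q^{-j}\mathbf{v} \in \Lambda_q$ would have $Q_q$-value $c^{-2}<1$. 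For the inductive step, $\mathbf{v}$ lies in the interior of some sub-sector $\Sigma_i^q$ with $i \in \{0, \ldots, q-2\}$; the key algebraic observation is that $[\mathfrak{w}_i^q \mid \mathfrak{w}_{i+1}^q] = U_q^i T_q$, so the matrix $A_i := T_q^{-1} U_q^{-i} \in G_q$ sends $(\mathfrak{w}_i^q, \mathfrak{w}_{i+1}^q)$ onto $((1,0)^T, (0,1)^T)$ and coincides with the first compound step of the reduction algorithm applied to $\mathbf{v}$. Hence $A_i\mathbf{v} \in \Lambda_q$ lies in the open first quadrant with $\rho(A_i\mathbf{v}) = \rho(\mathbf{v}) - 1$; by the induction hypothesis, $A_i\mathbf{v}$ is in the canonical tree, so $\mathbf{v} = A_i^{-1}(A_i\mathbf{v})$ is in the sub-tree rooted at $(\mathfrak{w}_i^q, \mathfrak{w}_{i+1}^q)$, which is part of the canonical tree.

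The main obstacle I expect is the precise identification of one level of Stern-Brocot descent with one compound step of the $Q_q$-reduction --- concretely, checking $[\mathfrak{w}_i^q \mid \mathfrak{w}_{i+1}^q] = U_q^i T_q$ so that $A_i = T_q^{-1}U_q^{-i}$, and observing that $\rho$ drops by exactly one at each such descent. Once this alignment is secured, the induction closes cleanly, with the $Q_q$-minimality argument handling the remaining boundary-ray bookkeeping.
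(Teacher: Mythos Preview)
Your argument is essentially correct and follows a genuinely different route from the paper. The paper proves exhaustion by first establishing an auxiliary lower bound $\epsilon_q>0$ on the absolute wedge product of non-parallel pairs in $\Lambda_q$, and then runs a growth argument: writing $\varsigma(r,s)^T=r+s$, one has $\varsigma(\mathbf{v})\ge \epsilon_q(\varsigma(\mathbf{u}_0)+\varsigma(\mathbf{u}_1))$ whenever $\mathbf{v}$ lies strictly inside the sector of a unimodular pair, while the $\varsigma$-values of the parents grow without bound as the process iterates; hence every $\mathbf{v}$ is eventually born. Your approach instead identifies one level of Stern--Brocot descent with one compound step of the $Q_q$-reduction from \cref{proposition: column - unimodular pair identification}(1) via the clean identity $[\mathfrak{w}_i^q\,\vert\,\mathfrak{w}_{i+1}^q]=U_q^iT_q$, and inducts on the reduction depth. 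This is more direct and avoids the $\epsilon_q$ detour; on the other hand, the paper's wedge bound is of independent interest (it is later sharpened to $|\mathbf{v}_0\wedge\mathbf{v}_1|\ge 1$ in \cref{corollary: odds and ends}).

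One small repair: your boundary-ray sentence is miswritten. You have $c^{-1}U_q^{-j}\mathbf{v}=c^{-1}\cdot c\,\mathfrak{w}_0^q=(1,0)^T$, not $c^{-1}(1,0)^T$, so the displayed equality is false, and even the intended conclusion $Q_q<1$ only treats $c>1$. A clean fix: if $c\mathfrak{w}_j^q\in\Lambda_q$ then $U_q^{-j}$ gives $(c,0)^T\in\Lambda_q$; for $c<1$ this already has $Q_q$-value $c^2<1$, contradicting \cref{proposition: column - unimodular pair identification}(2); for $c>1$, pick $B\in G_q$ with $B(1,0)^T=(c,0)^T$, so $B$ is upper-triangular with diagonal $(c,c^{-1})$, whence $B^{-1}(1,0)^T=(c^{-1},0)^T\in\Lambda_q$ has $Q_q$-value $c^{-2}<1$, again a contradiction. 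With this patch your induction closes.
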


\begin{proof}
That the Stern-Brocot process is well-defined for any two unimodular elements $\mathbf{u}_0$ and $\mathbf{u}_1$ of $\Lambda_q$ follows from \cref{proposition: column - unimodular pair identification}. In particular, since $\mathbf{u}_0$ and $\mathbf{u}_1$ are unimodular, then there exists $A \in G_q$ whose columns are $\mathbf{u}_0$ and $\mathbf{u}_1$ (i.e. $A(1,0)^T = \mathbf{u}_0$ and $A(0,1)^T = \mathbf{u}_1$). The vectors $\mathfrak{w}_i^q = (x_i^q, y_i^q)^T = x_i^q (1, 0)^T + y_i^q (0, 1)^T$, with $i = 0, 1, \cdots, q - 1$, are unimodular in pairs (by the Farey neighbor identities from \cref{proposition: column - unimodular pair identification}), and so their images $A\mathfrak{w}_i^q = x_i^q \mathbf{u}_0 + y_i^q \mathbf{u}_1$, $i = 0, 1, \cdots, q - 1$, satisfy the same Farey neighbor identities, are all elements of $\Lambda_q$, and all belong to the sector $[0,\infty)\mathbf{u}_0 + [0,\infty)\mathbf{u}_1$. It remains to prove that the Stern-Brocot process is exhaustive, and our proof is similar to that of the classical proof for Farey fractions.

We first need to show that the wedge products of pairs of non-parallel elements of $\Lambda_q$ are bounded away from zero.\footnote{This can be trivially extended into a proof that the set of wedge products of the elements of $\Lambda_q$ is discrete, similar to a characterization of lattice surfaces from \cite{Smillie2010-nb}.} Given two elements $\mathbf{w}_0, \mathbf{w}_1$ of $\Lambda_q$, we assume that if $0 < \mathbf{w}_0 \wedge \mathbf{w}_1 < \epsilon$, then $\epsilon$ cannot be arbitrarily small. Pick any $A \in G_q$ with $A\mathbf{u}_0 = (1, 0)^T$. Writing $A\mathbf{u}_1 = (x, y)^T$, then $0 < A\mathbf{u}_0 \wedge A\mathbf{u}_1 = y < \epsilon$. Shearing by $T_q^{\pm} = \begin{pmatrix}1 & \pm\lambda_q \\ 0 & 1\end{pmatrix}$, we can find $n \in \mathbb{Z}$ such that $T_q^n A\mathbf{u}_1 = (x + n\lambda_qy, y)^T$ has an $x$-component $0 \leq x + n \lambda_q y < \lambda_q \epsilon$. From this follows that $\|T_q^n A \mathbf{u}_1\| \leq \epsilon\sqrt{1 + \lambda_q^2}$, and so $\epsilon$ cannot be arbitrarily small by the discreteness of $\Lambda_q$. It thus follows that for all $q \geq 3$, there exists $\epsilon_q$ such that the wedge product of any non-parallel pair of elements of $\Lambda_q$ is bounded below by $\epsilon_q$ in absolute value.

Now, we write $\mathbf{u}_0 = (q_0,a_0)^T$, and $\mathbf{u}_1 = (q_1,a_1)^T$, and assume that $\mathbf{u}_0, \mathbf{u}_1$ belong to the first quadrant. (We can safely do that by the last claim of \cref{proposition: column - unimodular pair identification}.) If $(x,y)^T \in \Lambda_q$ belongs to the sector $(0,\infty)\mathbf{u}_0 + (0,\infty)\mathbf{u}_1$, the orientation of the vectors gives $\mathbf{u}_0 \wedge (x,y)^T, (x,y)^T \wedge \mathbf{u}_1 > 0$, and so $\mathbf{u}_0 \wedge (x,y)^T, (x,y)^T \wedge \mathbf{u}_1 \geq \epsilon_q$. We define the component sum function $\varsigma : \RR^2 \to \RR$ by $\varsigma(r,s)^T = r + s$ for all $(r,s)^T \in \RR^2$. We thus get
\begin{eqnarray*}
\varsigma(\mathbf{u}_1) \left(\mathbf{u}_0 \wedge (x,y)^T\right) + \varsigma(\mathbf{u}_0) \left((x,y)^T \wedge \mathbf{u}_1\right) &=& \begin{aligned} & (a_1 + q_1)(y q_0 - x a_0) \\ & \ + (a_0 + q_0)(a_1 x - q_1 y) \end{aligned} \\
 &=& (a_1 q_0 - a_0 q_1) (x + y) \\
 &=& \mathbf{u}_0 \wedge \mathbf{u}_1 \times \varsigma(x,y)^T \\
 &=& \varsigma(x,y)^T,
\end{eqnarray*}
and so
\begin{equation}
\label{equation: varsigma bound}
\varsigma(x,y)^T \geq \epsilon_q \left(\varsigma(\mathbf{u}_0) + \varsigma(\mathbf{u}_1)\right).
\end{equation}
Assuming without loss of generality that we are starting the Stern-Brocot process with $(1,0)^T$ and $(0,1)^T$, we have that the $\varsigma$ value of any vector that is generated at the $n$th step, $n \geq 0$, is bounded below by $n + 1$. (We demonstrate this fact at the end of this proof.) At any step, if $(x,y)^T$ is not one of the $q-2$ Stern-Brocot children of $\mathbf{u}_0$ and $\mathbf{u}_1$, then it belongs to a sector defined by one of the $q-1$ pairs of successive unimodular vectors that have been generated at this step. This cannot take place forever as each step of Stern-Brocot increases the right hand side of \cref{equation: varsigma bound} by at least $\epsilon_q$. This implies that $(x,y)^T$ eventually shows up as a child, and we are done.

Now we prove the lower bound on the $\varsigma$ value. If $\mathbf{c}$ is the $G_q$-Stern-Brocot child of two vectors $\mathbf{p}_1, \mathbf{p}_2$ in the first quadrant, then $\mathbf{c} = x_{i_0}^q \mathbf{p}_1 + y_{i_0}^q \mathbf{p}_2$ for some $1 \leq i_0 \leq q - 2$, and so $\varsigma(\mathbf{c}) = x_{i_0}^q \varsigma(\mathbf{p}_1) + y_{i_0}^q \varsigma(\mathbf{p}_2) \geq \varsigma(\mathbf{p}_1) + \varsigma(\mathbf{p}_2)$, since $x_{i_0}^q, y_{i_0}^q \geq 1$. It is easy to see that each of the vectors that are generated at one stage must have at least one parent that was generated at the previous stage. Since $\varsigma((1, 0)^T), \varsigma((0, 1)^T) = 1$, it now follows by induction that the $\varsigma \geq n + 1$ for all the vectors that are generated at the $n$th stage for $n \geq 0$.
\end{proof}

In the following corollary, we collect some consequences of the existence of Stern-Brocot tree for $\Lambda_q$ that we use in the remainder of this paper.

\begin{corollary}
\label{corollary: odds and ends}
The following are true.
\begin{enumerate}
\item If $\mathbf{v}_0, \mathbf{v}_1 \in \Lambda_q$ are such that $\mathbf{v}_0 \neq \pm \mathbf{v}_1$, then $|\mathbf{v}_0 \wedge \mathbf{v}_1| \geq 1$.

\item Let $\mathbf{v} \in \RR^2 \setminus \{(0, 0)^T\}$ be an arbitrary non-zero vector in the plane. Then either $\mathbf{v}$ is parallel to a vector in $\Lambda_q$, or for any unimodular pair $\mathbf{u}_0, \mathbf{u}_1 \in \Lambda_q$, if $\mathbf{v}$ belongs to the sector $(0,\infty)\mathbf{u}_0 + (0, \infty)\mathbf{u}_1$, then there exists a pair of unimodular $G_q$-Stern-Brocot grandchildren $\mathbf{w}_0, \mathbf{w}_1$ of $\mathbf{u}_0, \mathbf{u}_1$ such that $\mathbf{v}$ belongs to the sector $(0,\infty)\mathbf{w}_0 + (0, \infty)\mathbf{w}_1$, and $\mathbf{w}_0, \mathbf{w}_1$ are different from $\mathbf{u}_0, \mathbf{u}_1$.

\item Let $\mathbf{u}_0, \mathbf{u}_1 \in \Lambda_q$ be two unimodular vectors, and $\{\mathbf{w}_n\}_{n=1}^\infty$ be any sequence of elements of $\Lambda_q$ such that for each $n \geq 1$, $\mathbf{w}_n$ is generated at the $n$th iteration of the $G_q$-Stern-Brocot process applied to the two unimodular vectors $\mathbf{u}_0, \mathbf{u}_1$. Then $\lim_{n \to \infty} \|\mathbf{w}_n\| = \infty$.

\item The slopes of the non-vertical vectors in $\Lambda_q$ are dense in $\RR$.
\end{enumerate}
\end{corollary}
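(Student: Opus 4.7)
For (1), the plan is to reduce to a statement purely about the $y$-coordinates of elements of $\Lambda_q$. Given non-parallel $\mathbf{v}_0, \mathbf{v}_1 \in \Lambda_q$, I would first swap them if necessary so that $\mathbf{v}_0 \wedge \mathbf{v}_1 > 0$, and then pick $B \in G_q$ with $B(1, 0)^T = \mathbf{v}_0$; writing $B^{-1}\mathbf{v}_1 = (x, y)^T \in \Lambda_q$, one has $y = \mathbf{v}_0 \wedge \mathbf{v}_1$, so it suffices to show that every element $(x, y)^T \in \Lambda_q$ with $y > 0$ has $y \geq 1$. Shearing by a suitable power of $T_q$ (which preserves $y$) places the vector in the closed first quadrant. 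If the sheared vector lies on the $y$-axis, then \cref{proposition: column - unimodular pair identification}(2) applied to $Q_q((0, y)^T) = y^2$ immediately gives $y \geq 1$; otherwise \cref{theorem: G_q Stern Brocot process is well-defined and exhaustive} applied to the unimodular pair $(1, 0)^T, (0, 1)^T$ produces the vector at some stage $n \geq 1$ of the $G_q$-Stern-Brocot tree, and I would induct on $n$. The base case consists of the first-stage children $\mathfrak{w}_i^q = (x_i^q, y_i^q)^T$ with $y_i^q \geq 1$; for the inductive step, a stage-$(n+1)$ child $x_i^q \mathbf{p}_1 + y_i^q \mathbf{p}_2$ has $y$-coordinate equal to a non-negative combination (with coefficients $x_i^q, y_i^q \geq 1$) of the parents' $y$-coordinates, at least one of which is $\geq 1$ since the two parents cannot both equal $(1, 0)^T$.

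For (2), the plan is to iterate the Stern-Brocot process on $\mathbf{u}_0, \mathbf{u}_1$ and track the unique sub-sector containing the direction of $\mathbf{v}$. Once this sub-sector avoids $\mathbf{u}_0$ and $\mathbf{u}_1$ as edges we are done, so the only thing to rule out is that $\mathbf{v}$ always lands in the extremal sub-sector having $\mathbf{u}_0$ (or $\mathbf{u}_1$) as an edge. Writing $\mathbf{v} = \alpha \mathbf{u}_0 + \beta \mathbf{u}_1$ with $\alpha, \beta > 0$ finite (since by hypothesis $\mathbf{v}$ is not parallel to any element of $\Lambda_q$, including $\mathbf{u}_0$ and $\mathbf{u}_1$), one iteration along the $\mathbf{u}_0$ side replaces the opposite edge by $\lambda_q \mathbf{u}_0 + \mathbf{u}_1$, so the ratio $\alpha/\beta$ drops by exactly $\lambda_q \geq 1$; hence $\mathbf{v}$ must leave the extremal $\mathbf{u}_0$-sector after at most $\lceil(\alpha/\beta)/\lambda_q\rceil$ iterations, and symmetrically on the $\mathbf{u}_1$ side. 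For (3), I would pick $A \in G_q$ with $A\mathbf{u}_0 = (1, 0)^T$ and $A\mathbf{u}_1 = (0, 1)^T$ (via \cref{proposition: column - unimodular pair identification}), transporting the Stern-Brocot tree rooted at $\mathbf{u}_0, \mathbf{u}_1$ to the standard one, and then invoke the $\varsigma$-growth bound established inside the proof of \cref{theorem: G_q Stern Brocot process is well-defined and exhaustive} to get $\varsigma(A\mathbf{w}_n) \geq n + 1$, hence $\|A \mathbf{w}_n\| \geq (n+1)/\sqrt{2}$, and then $\|\mathbf{w}_n\| \to \infty$ via $\|\mathbf{w}_n\| \geq \|A\mathbf{w}_n\|/\|A\|_{\mathrm{op}}$.

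For (4), fix $m \in \RR$; if $(1, m)^T$ is parallel to an element of $\Lambda_q$ the statement is immediate, so assume not. Starting with the unimodular pair in $\Lambda_q$ whose open sector contains the direction of $(1, m)^T$ (e.g.\ $(1, 0)^T$ and $(0, 1)^T$ for $m > 0$, or $(0, -1)^T$ and $(1, 0)^T$ for $m < 0$), I would iterate (2) to obtain bounding unimodular pairs $\mathbf{w}_0^{(n)}, \mathbf{w}_1^{(n)}$ appearing at Stern-Brocot stages tending to infinity. By (3) their norms diverge, and since $\mathbf{w}_0^{(n)} \wedge \mathbf{w}_1^{(n)} = 1$, the identity $\sin(\angle \mathbf{w}_0^{(n)}\mathbf{w}_1^{(n)}) = 1/(\|\mathbf{w}_0^{(n)}\|\|\mathbf{w}_1^{(n)}\|)$ forces the angular width of the sub-sectors to collapse to zero, so the slopes of $\mathbf{w}_0^{(n)}$ must converge to $m$.

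The main obstacle I anticipate is the Stern-Brocot induction inside (1): the delicate point is that one parent at some stage may equal $(1, 0)^T$ and contribute zero to the child's $y$-coordinate, so the lower bound must be carried entirely by the other parent. Keeping track of the fact that only $(1, 0)^T$ itself has $y$-coordinate zero (so at most one parent of any given child can be the offender), combined with the uniform lower bound $x_i^q, y_i^q \geq 1$ on the first-stage coefficients, is what makes the induction go through.
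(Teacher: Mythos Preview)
Your proposal is correct and follows essentially the same route as the paper: the same reductions for (1) and (3) via \cref{proposition: column - unimodular pair identification}, the same Stern-Brocot exhaustion for (2), and the same sector-collapse argument for (4). The only packaging difference is that the paper first isolates an auxiliary fact---that after $n$ steps the extremal grandchildren are $\mathbf{w}_n^r = n\lambda_q\mathbf{u}_0+\mathbf{u}_1$ and $\mathbf{w}_n^l = \mathbf{u}_0+n\lambda_q\mathbf{u}_1$, with all stage-$\leq n$ grandchildren trapped in their sector---and then reads off (1) and (2) from it, whereas you run the Stern-Brocot induction and the $\alpha/\beta$-tracking directly; these are equivalent. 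One small point to tighten in (4): your appeal to (3) needs the observation that each application of (2) strictly increases the minimum Stern-Brocot stage of the bounding pair (since strict grandchildren of a pair appear no earlier than one stage after the later parent), so both $\|\mathbf{w}_0^{(n)}\|$ and $\|\mathbf{w}_1^{(n)}\|$ diverge.
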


\begin{proof}
We first prove the following: If $\mathbf{u}_0, \mathbf{u}_1 \in \Lambda_q$ are unimodular (i.e. $\mathbf{u}_0 \wedge \mathbf{u}_1 = 1$), then after $n \geq 1$ applications of the $G_q$-Stern-Brocot process, the two vectors $\mathbf{w}_n^r = n \lambda_q \mathbf{u}_0 + \mathbf{u}_1 = (n\lambda_q, 1)^T$ and $\mathbf{w}_n^l = \mathbf{u}_0 + n \lambda_q \mathbf{u}_1 = (1, n\lambda_q)^T$ are $G_q$-Stern-Brocot grandchildren of $\mathbf{u}_0$ and $\mathbf{u}_1$, and all the grandchildren of $\mathbf{u}_0$ and $\mathbf{u}_1$ that have been generated by the $n$th step belong to the sector $[0,\infty)\mathbf{w}_n^r + [0,\infty)\mathbf{w}_n^l$. Now, since $(x_1^q, y_1^q)^T = \mathfrak{w}_1^q = U_q(1, 0)^T = (\lambda_q, 1)^T$, and $(x_{q-2}^q, y_{q-2}^q)^T = \mathfrak{w}_{q-2}^q = U_q^{-1}(0,1)^T = (1, \lambda_q)^T$, it follows from \cref{theorem: G_q Stern Brocot process is well-defined and exhaustive} that the two vectors $\mathbf{w}_1^r = x_1^q \mathbf{u}_0 + y_1^q \mathbf{u}_1 = \lambda_q \mathbf{u}_0 + \mathbf{u}_1$ and $\mathbf{w}_1^l = x_{q-2}^q \mathbf{u}_0 + y_{q-2}^q \mathbf{u}_1 = \mathbf{u}_0 + \lambda_q \mathbf{u}_1$ are Stern-Brocot children of $\mathbf{u}_0$ and $\mathbf{u}_1$, and that all the children of $\mathbf{u}_0$ and $\mathbf{u}_1$ that were generated after one iteration are contained in the sector corresponding to $\mathbf{w}_1^r$ and $\mathbf{w}_1^l$. The remainder of the claim follows by repeatedly applying the Stern-Brocot process to the unimodular pair $\mathbf{u}_0$ and $\mathbf{w}_n^r$, and the unimodular pair $\mathbf{w}_n^l$ and $\mathbf{u}_1$, for all $n \geq 2$.

For the first claim: Since $-\operatorname{Id}_2 = U_q^q$ is in $G_q$, we can assume that the angle between $\mathbf{v}_0$ and $\mathbf{v}_1$ does not exceed $\pi/2$. We also permute $\mathbf{v}_0$ and $\mathbf{v}_1$ if need be so that $\mathbf{v}_0 \wedge \mathbf{v}_1 > 0$. Furthermore, we can assume that $\mathbf{v}_0 = (1, 0)^T$. (There exists $A \in G_q$ such that $\mathbf{v}_0 = A(1, 0)^T$, and so we can replace $\mathbf{v}_0$ and $\mathbf{v}_1$ with $\widetilde{\mathbf{v}}_0 = A^{-1}\mathbf{v}_0$ and $\widetilde{\mathbf{v}}_1 = A^{-1}\mathbf{v}_1$, and preserve the wedge product $\widetilde{\mathbf{v}}_0 \wedge \widetilde{\mathbf{v}}_1 = \det(A^{-1}) \times \mathbf{v}_0 \wedge \mathbf{v}_1 = \mathbf{v}_0 \wedge \mathbf{v}_1$.) We now have that $\mathbf{v}_0 = (1, 0)^T$, and that $\mathbf{v}_1$ is in the first quadrant. That is, $\mathbf{v}_1$ is either $\mathfrak{w}_{q-1}^q = (0, 1)^T$, or a $G_q$-Stern-Brocot grandchild of $\mathfrak{w}_0^q = (1, 0)^T$ and $\mathfrak{w}_{q-1}^q = (0, 1)^T$. Writing $\mathbf{v}_1 = (x_{\mathbf{v}_1}, y_{\mathbf{v}_1})^T$, we have $\mathbf{v}_0 \wedge \mathbf{v}_1 = y_{\mathbf{v}_1}$. The $y$ components of the vectors $\{\mathbf{w}_n^r\}_{n=1}^\infty$ from the first claim in the corollary all are all $y = 1$, and so $y_{\mathbf{v}_1} = \mathbf{v}_0 \wedge \mathbf{v}_1 \geq 1$ as required.

For the second claim: The unit vectors in the directions of $\{\mathbf{w}_n^r\}_{n=1}^\infty$ and $\{\mathbf{w}_n^l\}_{n=1}^\infty$ converge to $\mathbf{u}_0$ and $\mathbf{u}_1$ as $n \to \infty$. As such, if the vector $\mathbf{v}$ is not in $\Lambda_q$, then it will eventually be contained in the sector bounded by $\mathbf{w}_{n_0}^l$ and $\mathbf{w}_{n_0}^l$ for some $n_0 \geq 1$, and consequently belongs to the sector bounded by a pair of unimodular grandchildren of $\mathbf{u}_0$ and $\mathbf{u}_1$.

For the third claim: By the fourth claim in \cref{proposition: column - unimodular pair identification}, and the boundedness of the elements of $G_q$ as linear operators on $\RR^2$, we can assume without loss of generality that $\mathbf{u}_0 = (1, 0)^T$ and $\mathbf{u}_1 = (0, 1)^T$. At the end of the proof of \cref{theorem: G_q Stern Brocot process is well-defined and exhaustive}, we showed that if $\mathbf{w}_n$ is generated at the $n$th stage of the $G_q$-Stern-Brocot process applies to $(1, 0)^T$ and $(0, 1)^T$, then $\mathbf{w}_n \geq n + 1$. If $\mathbf{w}_n = (r, s)^T$, then $\varsigma(\mathbf{w}_n) = r + s \leq \sqrt{2} \sqrt{r^2 + s^2} \leq \sqrt{2} \|\mathbf{w}_n\|$, which proves the claim.

For the fourth claim: It suffices to show that if $\alpha \geq 0$ is not the slope of a vector in $\Lambda_q$, then $\alpha$ can be approximated by slopes of vectors in $\Lambda_q$. Writing $\mathbf{v} = (1, \alpha)^T$, we note that if $\mathbf{u}_0, \mathbf{u}_1 \in \Lambda_q$ are two unimodular vectors in the first quadrant whose sector contains $\mathbf{v}$, then by \cref{equation: sin inequalities} we have
\begin{equation*}
0 < \mathbf{u}_0 \wedge \mathbf{v} < \frac{\|\mathbf{v}\|}{\|\mathbf{u}_1\|} \mathbf{u}_0 \wedge \mathbf{u}_1 = \mathbf{v} < \frac{\|\mathbf{v}\|}{\|\mathbf{u}_1\|}.
\end{equation*}
Writing $\mathbf{u}_0 = (x, y)^T$, and assuming that $x > 0$, we thus get
\begin{equation}
\label{equation: Diophantine approximation}
0 \leq \alpha - \frac{y}{x} \leq \frac{\sqrt{1+\alpha^2}}{x\|\mathbf{u}_1\|}.
\end{equation}
Now, we can start with $\mathbf{u}_0 = (1, 0)^T$ and $\mathbf{u}_1 = (0, 1)^T$ as two vectors in the first quadrant whose sector contains $\mathbf{v}$, and by the second claim in this corollary, we can repeatedly replace $\mathbf{u}_0$ and $\mathbf{u}_1$ with unimodular pairs that are generated at later stages of the Stern-Brocot process. In \cref{equation: Diophantine approximation}, $x \geq 1$, and $\lim_{n \to \infty} \|\mathbf{u}_1\| = \infty$, and we are done. 
\end{proof}

\subsection{The Boca-Cobeli-Zaharescu Map Analogue for $\Lambda_q = G_q(1,0)^T$}

In the following theorem, we present the BCZ map analogue for $\Lambda_q$. In essence, this theorem along with the next-term algorithm (\cref{theorem: next term algorithm}) extend the properties of the Farey sequence alluded to in the introduction using the BCZ map formalization.

\begin{theorem}
\label{theorem: G_q BCZ maps}
The following are true.
\begin{enumerate}
\item For any $A \in \SL(2, \RR)$, if $A \Lambda_q$ has a horizontal vector of length not exceeding $1$ (i.e. a horizontal vector in $A \Lambda_q \cap S_1$), then $A \Lambda_q$ can be uniquely identified with a point $(a_A, b_A)$ in the \emph{$G_q$-Farey triangle}
\begin{equation*}
\mathscr{T}^q = \{(a, b) \in \RR^2 \mid 0 < a \leq 1,\ 1 - \lambda_q a < b \leq 1\}
\end{equation*}
through $B\Lambda_q = g_{a_A,b_A}\Lambda_q$. Moreover, the value $a_A$ agrees with the length of the horizontal vector in $A\Lambda_q \cap S_1$.

\item Let $(a, b) \in \mathscr{T}^q$ be any point in the $G_q$-Farey triangle. The set $g_{a, b} \Lambda_q \cap S_1$ has a vector with smallest positive slope. Consequently, there exists a smallest $s = R_q(a, b) > 0$ such that $h_s g_{a,b} \Lambda_q$ has a horizontal vector of length not exceeding $1$, and hence $h_s g_{a, b} \Lambda_q$ corresponds to a unique point $\BCZ_q(a, b) \in \mathscr{T}^q$ in the $G_q$-Farey triangle. The function $R_q : \mathscr{T}^q \to \RR_+$ is referred to as the \emph{$G_q$-roof function}, and the map $\BCZ_q(a, b) : \mathscr{T}^q \to \mathscr{T}^q$ is referred to as the \emph{$G_q$-BCZ map}.

\item The $G_q$-Farey triangle $\mathscr{T}^q$ can be partitioned into the union of
\begin{equation*}
\mathscr{T}_i^q := \{(a, b) \in \mathscr{T}^q \mid (a, b)^T \cdot \mathfrak{w}_{i - 1} > 1,\ (a, b)^T\cdot\mathfrak{w}_i \leq 1\},
\end{equation*}
with $i = 2, 3, \cdots, q - 1$, such that if $(a, b) \in \mathscr{T}_i^q$, then $g_{a, b}\mathfrak{w}_i^q$ is the vector of least positive slope in $g_{a,b}\Lambda_q \cap S_1$, and
\begin{itemize}
\item the value of the roof function $R_q(a, b)$ is given by
\begin{equation*}
R_q(a, b) = \frac{y_i^q}{a \times (a,b)^T \cdot \mathfrak{w}_i^q}, \text{ and }
\end{equation*}
\item the value of the BCZ map $\BCZ_q(a, b)$ is given by
\begin{equation*}
\BCZ_q(a, b) = \left((a,b)^T \cdot \mathfrak{w}_i^q, (a,b)^T \cdot \mathfrak{w}_{i+1}^q + k_i^q(a, b) \times \lambda_q \times (a, b)^T \cdot \mathfrak{w}_i^q\right),
\end{equation*}
where the \emph{$G_q$-index} $k_i^q(a,b)$ is given by
\begin{equation*}
k_i^q(a,b) = \left\lfloor \frac{1 - (a,b)^T \cdot \mathfrak{w}_{i+1}^q}{\lambda_q \times (a,b)^T \cdot \mathfrak{w}_i^q} \right\rfloor.
\end{equation*}
\end{itemize}
\end{enumerate}
\end{theorem}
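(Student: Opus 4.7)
For parts (1) and (2), I would start by showing that the horizontal vectors in $\Lambda_q$ are exactly $\pm(1,0)^T$: an element $M \in G_q$ sends $(1,0)^T$ to a horizontal vector iff $M$ is upper-triangular, and the upper-triangular elements of the Hecke group $G_q$ are $\{\pm T_q^n\}_{n \in \ZZ}$ (the stabilizer of $\infty$ under the M\"obius action on the upper half-plane). Hence the horizontal vector $(a_A, 0)^T \in A\Lambda_q \cap S_1$ is unique with $0 < a_A \le 1$. Picking any $M \in G_q$ with $AM(1,0)^T = (a_A, 0)^T$ forces $AM = g_{a_A, b}$ for some $b \in \RR$, and the residual freedom $M \mapsto M T_q^n$ sends $b$ to $b + n\lambda_q a_A$, so the length-$\lambda_q a_A$ interval $(1 - \lambda_q a_A, 1]$ picks out a unique $b_A$. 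For part (2), I would observe that $h_s$ maps $(x, y)^T$ to $(x, y - sx)^T$, so horizontal vectors of $h_s g_{a,b}\Lambda_q$ lying in $S_1$ correspond exactly to vectors of $g_{a,b}\Lambda_q \cap S_1$ of slope $s$; hence $R_q(a, b)$ equals the smallest positive slope realized in $g_{a,b}\Lambda_q \cap S_1$, which exists because the vectors of positive slope at most any cutoff $C$ inside $S_1$ lie in a bounded triangle and so form a finite subset of the discrete set $g_{a,b}\Lambda_q$. Part (1) applied to $A = h_{R_q(a,b)} g_{a,b}$ then yields $\BCZ_q(a, b) \in \mathscr{T}^q$.

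For part (3), the key engine is the Chebyshev-type recurrence $\mathfrak{w}_{j+1}^q = \lambda_q \mathfrak{w}_j^q - \mathfrak{w}_{j-1}^q$, which follows immediately from the identity $U_q + U_q^{-1} = \lambda_q I$. Setting $f_j := (a,b)^T \cdot \mathfrak{w}_j^q$, the sequence satisfies $f_{j+1} = \lambda_q f_j - f_{j-1}$; since the characteristic polynomial $z^2 - \lambda_q z + 1$ has roots $e^{\pm i\pi/q}$, the general solution is $f_j = R\cos(j\pi/q - \phi)$, a shifted cosine. Equivalently, $g_{a,b}\mathfrak{w}_j^q$ traces the image of the ellipse $\{Q_q = 1\}$, and the line $x = 1$ meets any conic in at most two points. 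The defining inequalities of $\mathscr{T}^q$ translate to $f_0 = a \le 1$, $f_1 = \lambda_q a + b > 1$, and $f_{q-1} = b \le 1$, which together with the above unimodality force a unique downward crossing of level $1$ between indices $i-1$ and $i$ for some $i \in \{2, \ldots, q-1\}$. This establishes the partition $\{\mathscr{T}_i^q\}$.

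For the chosen $i$, I would identify $g_{a,b}\mathfrak{w}_i^q$ as the slope minimizer in two steps. First, a direct computation exploiting $\mathfrak{w}_j^q \wedge \mathfrak{w}_{j+1}^q = 1$ gives $y_{j+1}^q/(a f_{j+1}) - y_j^q/(a f_j) = 1/(f_j f_{j+1})$, so slopes of $\{g_{a,b}\mathfrak{w}_j^q : f_j > 0\}$ are monotonically increasing in $j$; hence $g_{a,b}\mathfrak{w}_i^q$ is the smallest-slope vector in $g_{a,b}\Lambda_q \cap S_1$ among the $g_{a,b}\mathfrak{w}_j^q$. Second, by \cref{theorem: G_q Stern Brocot process is well-defined and exhaustive} every remaining candidate in the sector $[\mathfrak{w}_{j-1}^q, \mathfrak{w}_j^q]$ is a Stern-Brocot descendant $\alpha \mathfrak{w}_{j-1}^q + \beta \mathfrak{w}_j^q$ with $\alpha, \beta \ge 1$ (by the first claim of \cref{corollary: odds and ends}); its projection onto $(a, b)$ is at least $f_{j-1} + f_j$, which exceeds $1$ for every $j \le i$, ruling out $S_1$, while for $j > i$ its slope exceeds that of $g_{a,b}\mathfrak{w}_i^q$ and so is irrelevant for minimization. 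This pins down $R_q(a, b) = y_i^q/(a f_i)$. For the $\BCZ_q$ formula, applying part (1) to $h_{R_q(a, b)} g_{a, b}$, a short calculation gives $h_{R_q(a,b)} g_{a,b} U_q^i = g_{f_i, -f_{i-1}}$; finding the unique power $T_q^{k+1}$ bringing $-f_{i-1}$ into $(1 - \lambda_q f_i, 1]$, combined with the recurrence identity $-f_{i-1} = f_{i+1} - \lambda_q f_i$, recovers $\BCZ_q(a, b) = (f_i,\ f_{i+1} + k_i^q \lambda_q f_i)$ with $k_i^q(a, b) = \lfloor(1 - f_{i+1})/(\lambda_q f_i)\rfloor$. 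The hard part will be the partition argument of part (3), which requires combining the cosine/conic structure of $f_j$ with the Stern-Brocot machinery to simultaneously fix the index $i$ and rule out every descendant as a competing slope minimizer.
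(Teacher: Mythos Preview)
Your strategy is sound and takes a different route from the paper for part (3). Where the paper argues geometrically---introducing the lines $\mathcal{L}_i^{q,\text{top}} = \{(a,b)\cdot\mathfrak{w}_i^q = 1\}$ and $\mathcal{L}_i^{q,\text{bot}} = \{(a,b)\cdot\mathfrak{w}_i^q = 0\}$ and checking via explicit computations on the ellipse $Q_q = 1$ and on the side $a = 1$ that these segments stack monotonically inside $\mathscr{T}^q$---your Chebyshev recurrence $f_{j+1} = \lambda_q f_j - f_{j-1}$ and sampled-cosine interpretation deliver the unimodality (hence the unique downward crossing) more directly. Your slope-increment identity $\slope(g_{a,b}\mathfrak{w}_{j+1}^q) - \slope(g_{a,b}\mathfrak{w}_j^q) = 1/(f_j f_{j+1})$ is a clean substitute for the paper's slope comparison, and conjugating by $U_q^i$ (columns $\mathfrak{w}_i^q,\,-\mathfrak{w}_{i-1}^q$) in place of the paper's $[\mathfrak{w}_i^q\ \mathfrak{w}_{i+1}^q]$ is equivalent after one use of the recurrence.

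One point you must address but do not: that $f_i > 0$, so that $g_{a,b}\mathfrak{w}_i^q$ actually lies in $S_1$. Your claim that descendants in the sector between $\mathfrak{w}_{i-1}^q$ and $\mathfrak{w}_i^q$ have projection at least $f_{i-1} + f_i > 1$ needs this (and so does the slope monotonicity), and the cosine picture alone does not rule out $f_{i-1} > 1$ with $f_i \le 0$. This is easily patched within your framework: if $f_{i-1} > 1$ and $f_i \le 0$, then $f_{i+1} = \lambda_q f_i - f_{i-1} < -1$, and inductively $f_j < -1$ for all $j > i$, contradicting $f_{q-1} = b > 1 - \lambda_q a \ge 1 - \lambda_q \ge -1$. (The paper handles the same issue via its ``step 2'', showing $\mathcal{L}_i^{q,\text{bot}}$ lies below $\mathcal{L}_{i-1}^{q,\text{top}}$ in $\mathscr{T}^q$, which reduces to $y_i^q \ge 1$.) A second small omission, shared with the paper, is that $\Lambda_q$-vectors in the second quadrant also have positive $y$-component and must be excluded as slope-minimizers; but for such $\mathbf{v}$ either $g_{a,b}\mathbf{v}\notin S_1$ (when $b\le 0$) or its slope exceeds that of $g_{a,b}\mathfrak{w}_{q-1}^q$, so this is immediate.
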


\begin{figure}
\centering
\begin{tikzpicture}[scale=4]
\draw[->,ultra thin] (-0.5,0)--(1.25,0) node[right]{$\scriptstyle a$};
\draw[->,ultra thin] (0,-0.5)--(0,1.25) node[above]{$\scriptstyle b$};
\draw[thick] (0, 1) -- (1, -0.618) -- (1, 1) -- (0, 1);
\draw[thick] (0.382, 0.382) -- (1, 0);
\draw[thick] (0.618, 0) -- (1, -0.382);
\node at (1.2, 0.5) {$\mathscr{T}_4^5$};
\node at (1.2, -0.191) {$\mathscr{T}_3^5$};
\node at (1.2, -0.5) {$\mathscr{T}_2^5$};
\node at (1.1, 0.191) [rotate=90] {$a = 1$};
\node at (0.5, 1.09) {$\mathcal{L}_4^{5, \text{top}}$};
\node [rotate=-31.27] at (0.69, 0.3) {$\mathcal{L}_3^{5, \text{top}}$};
\node [rotate=-45] at (0.81, -0.05) {$\mathcal{L}_2^{5, \text{top}}$};
\node [rotate=-58.28] at (0.4, 0.1) {$\mathcal{L}_1^{5, \text{top}}$};
\end{tikzpicture}

\caption{The $G_5$-Farey triangle $\mathscr{T}^5$ with the subregions $\mathscr{T}_2^5$, $\mathscr{T}_3^5$, and $\mathscr{T}_4^5$ from \cref{theorem: G_q BCZ maps} indicated. The figure also shows the lines $\mathcal{L}_i^{5, \text{top}} = \{(a, b) \in \mathscr{T}^5 \mid (a, b)^T \cdot \mathfrak{w}_i^q = 1\}$, $i = 1, 2, 3, 4$, that bound the aforementioned subregions in the proof of \cref{theorem: G_q BCZ maps}.}
\end{figure}
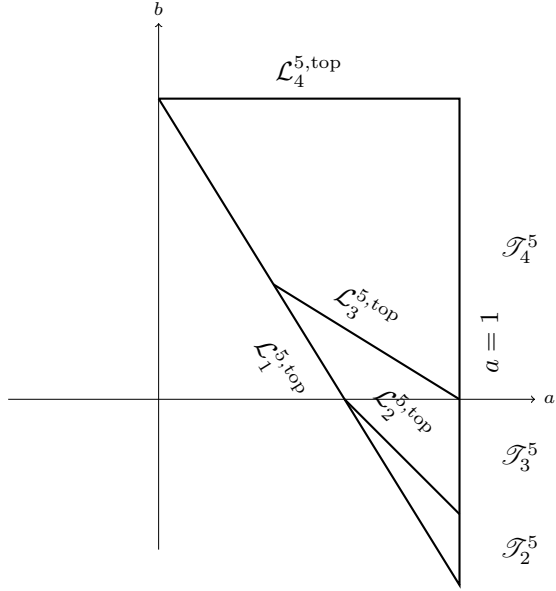

We first need the following lemma.

\begin{lemma}
\label{lemma: orbit-stabilizer lemma}
Given $A \in \SL(2, \RR)$, if $A\Lambda_q$ contains both $(1, 0)^T$ and $(0, 1)^T$, then $A\Lambda_q = \Lambda_q$. In particular, the following are true.

\begin{enumerate}
\item  For any $B \in \SL(2, \RR)$, $B \Lambda_q = \Lambda_q$ if and only if $B \in G_q$. From this follows that the sets $C\Lambda_q$, with $C$ varying over $\SL(2, \RR)$, can be identified with the elements of $\SL(2, \RR)/G_q$.

\item For any $B \in \SL(2, \RR)$, if $B \Lambda_q$ contains a horizontal vector of length $a > 0$, then there exists $b \in \RR$ such that $B \Lambda_q = g_{a,b} \Lambda_q$.
\end{enumerate}
\end{lemma}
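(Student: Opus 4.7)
The strategy is to reduce the main claim to the fourth part of \cref{proposition: column - unimodular pair identification}, which identifies unimodular pairs in $\Lambda_q$ with columns of matrices in $G_q$; once that is in hand, both enumerated sub-claims follow in essentially one line each.

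For the main claim I would first pull back the distinguished vectors through $A$: the hypothesis produces $\mathbf{u}_0, \mathbf{u}_1 \in \Lambda_q$ with $A\mathbf{u}_0 = (1,0)^T$ and $A\mathbf{u}_1 = (0,1)^T$, and because $A \in \SL(2,\RR)$ preserves the wedge product we get $\mathbf{u}_0 \wedge \mathbf{u}_1 = 1$. The fourth part of \cref{proposition: column - unimodular pair identification} then yields an $M \in G_q$ with $M\mathbf{u}_0 = (1,0)^T$ and $M\mathbf{u}_1 = (0,1)^T$, and since $M$ and $A$ agree on the linearly independent pair $\mathbf{u}_0, \mathbf{u}_1$ they coincide as linear maps. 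Thus $A = M \in G_q$, which already gives a strengthening of the stated conclusion (namely $A \in G_q$, hence $A\Lambda_q = \Lambda_q$).

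Claim 1 is then immediate: the ``if'' direction is the trivial $BG_q(1,0)^T = G_q(1,0)^T$, and the ``only if'' direction follows because $B\Lambda_q = \Lambda_q$ certainly contains $(1,0)^T$ and $(0,1)^T$, so the main claim (in its strengthened form) applies to $B$. For Claim 2 the plan is to exhibit $b$ explicitly and then appeal to the main claim. I would pick $\mathbf{u}_0 \in \Lambda_q$ with $B\mathbf{u}_0 = (a,0)^T$, write $\mathbf{u}_0 = M_0(1,0)^T$ for some $M_0 \in G_q$, and take $\mathbf{u}_1 := M_0(0,1)^T \in \Lambda_q$, so that $\mathbf{u}_0 \wedge \mathbf{u}_1 = \det M_0 = 1$. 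Writing $B\mathbf{u}_1 = (x,y)^T$, the identity $B\mathbf{u}_0 \wedge B\mathbf{u}_1 = 1$ forces $ay = 1$, so $y = a^{-1}$; setting $b := x$ a direct check gives $g_{a,b}(1,0)^T = B\mathbf{u}_0$ and $g_{a,b}(0,1)^T = B\mathbf{u}_1$. Applying the main claim to $g_{a,b}^{-1}B$ then yields $g_{a,b}^{-1}B\Lambda_q = \Lambda_q$, i.e. $B\Lambda_q = g_{a,b}\Lambda_q$.

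There is no genuine obstacle here once \cref{proposition: column - unimodular pair identification} is available. The only subtlety worth flagging is that any $\mathbf{u}_0 \in \Lambda_q$ automatically admits a unimodular partner in $\Lambda_q$; this is baked into the orbit description $\Lambda_q = G_q(1,0)^T$ rather than being an additional hypothesis, and it is what powers the explicit construction of $\mathbf{u}_1$ in Claim 2.
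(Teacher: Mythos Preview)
Your proof is correct and follows essentially the same route as the paper: both reduce the main claim to the fourth part of \cref{proposition: column - unimodular pair identification} by observing that $A^{-1}(1,0)^T$ and $A^{-1}(0,1)^T$ are a unimodular pair in $\Lambda_q$, and both handle Claim~2 by producing a unimodular partner $\mathbf{u}_1$ for $\mathbf{u}_0$ via an element of $G_q$ and then reading off $b$ from $B\mathbf{u}_1$. Your Claim~2 argument is marginally more direct than the paper's (you identify $g_{a,b}$ in one step rather than factoring through $s_{1/a}$ and a shear), but the content is identical.
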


\begin{proof}
We first prove the main claim. Let $A \in \SL(2, \RR)$ be such that $A \Lambda_q$ contains both $(1, 0)^T$ and $(0, 1)^T$. Then there exists $\mathbf{u}_0, \mathbf{u}_1 \in \Lambda_q$ such that $\mathbf{u}_0 = A^{-1}(1, 0)^T$ and $\mathbf{u}_1 = A^{-1}(0, 1)^T$, and $\mathbf{u}_0 \wedge \mathbf{u}_1 = \det(A^{-1}) \times (1, 0)^T \times (0, 1)^T = 1$. The columns of $A^{-1}$ thus form a unimodular pair of elements of $\Lambda_q$, and so by the last claim of \cref{proposition: column - unimodular pair identification}, the matrix $A^{-1}$, and by necessity $A$, belong to the group $G_q$.

The first claim now follows from the fact that if $B \in \SL(2, \RR)$ is such that $B\Lambda_q = \Lambda_q$, then $B\Lambda_q$ contains both $(1, 0)^T$ and $(0, 1)^T$.

We now prove the second claim. Let $\mathbf{u}_0 \in \Lambda_q$ be such that $B\mathbf{u}_0 = (a, 0)^T \in B\Lambda_q$ is parallel to the horizontal vector in question. If $A \in \SL(2, \RR)$ is such that $\mathbf{u}_0 = A(1, 0)^T$, then $\mathbf{u}_1 = A(0, 1)^T$ is an element of $\Lambda_q$ with $\mathbf{u}_0 \wedge \mathbf{u}_1 = 1$. Writing $\widetilde{\mathbf{u}}_0 = \begin{pmatrix}a^{-1} & 0 \\ 0 & a\end{pmatrix}B\mathbf{u}_0 = (1, 0)^T$, and $\widetilde{\mathbf{u}}_1 = \begin{pmatrix}a^{-1} & 0 \\ 0 & a\end{pmatrix}B\mathbf{u}_1 = (x, y)^T$, we have that $\widetilde{\mathbf{u}}_0 \wedge \widetilde{\mathbf{u}}_1 = 1$, and so $y = 1$. Shearing by $T_{-x} = \begin{pmatrix}1 & -x \\ 0 & 1\end{pmatrix}$, we have that $T_{-x} \widetilde{\mathbf{u}}_0 = (1, 0)^T$, and $T_{-x} \widetilde{\mathbf{u}}_1 = (0, 1)^T$. That is, the set $\begin{pmatrix}1 & -x \\ 0 & 1\end{pmatrix} \begin{pmatrix}a^{-1} & 0 \\ 0 & a\end{pmatrix} B \Lambda_q$ contains both $(1, 0)^T$, and $(0,1)^T$, and so is equal to $\Lambda_q$. From this follows that
\begin{eqnarray*}
B\Lambda_q &=& \begin{pmatrix}a^{-1} & 0 \\ 0 & a\end{pmatrix}^{-1} \begin{pmatrix}1 & -x \\ 0 & 1\end{pmatrix}^{-1} \Lambda_q \\
  &=& \begin{pmatrix}a & ax \\ 0 & a^{-1}\end{pmatrix} \Lambda_q \\
  &=& g_{a, ax} \Lambda_q,
\end{eqnarray*}
and taking $b = ax$ proves the claim.
\end{proof}

We now proceed to prove \cref{theorem: G_q BCZ maps}.

\begin{proof}
We first derive the explicit values of the roof function $R_q(a, b)$ and BCZ map $\BCZ_q(a, b)$ in the second half of the third claim for a given point $(a, b) \in \mathscr{T}_i^q$, $i = 2, 3, \cdots, q - 1$, assuming the remainder of the theorem, and then resume the proof of the theorem from the beginning.

If $(a, b) \in \mathscr{T}_i^q$, with $2 \leq i \leq q - 1$, then $g_{a, b} \mathfrak{w}_i^q$ has the smallest positive slope in $g_{a, b} \Lambda_q \cap S_1$ by our (yet to be proven) assumption. This gives
\begin{eqnarray*}
R_q(a, b) &=& \slope\left(g_{a,b} \mathfrak{w}_i^q\right) \\
 &=& \slope\left(\begin{pmatrix}a & b \\ 0 & a^{-1}\end{pmatrix} \begin{pmatrix}x_i^q \\ y_i^q\end{pmatrix}\right) \\
 &=& \frac{y_i^q}{a(a x_i^q + b y_i^q)} \\
 &=& \frac{y_i^q}{a \times (a, b)^T \cdot \mathfrak{w}_i^q}.
 \end{eqnarray*}
Now, let $[\mathfrak{w}_i^q\ \mathfrak{w}_{i+1}^q]$ be the matrix whose columns are $\mathfrak{w}_i^q$ and $\mathfrak{w}_{i + 1}^q$. The matrix $[\mathfrak{w}_i^q\ \mathfrak{w}_{i+1}^q]$ is in $G_q$ by \cref{proposition: column - unimodular pair identification} since its columns are two unimodular elements of $\Lambda_q$. We show that $h_{R_q(a, b)} g_{a, b} [\mathfrak{w}_i^q\ \mathfrak{w}_{i+1}^q] = g_{(a, b)^T \cdot \mathfrak{w}_i^q, (a, b)^T \cdot \mathfrak{w}_{i + 1}^q}$, and follow that by finding the representative of $g_{(a, b)^T \cdot \mathfrak{w}_i^q, (a, b)^T \cdot \mathfrak{w}_{i + 1}^q}\Lambda_q$ in $\mathscr{T}^q$. (Note that $h_{R_q(a,b)}g_{a,b}\Lambda_q = g_{(a, b)^T \cdot \mathfrak{w}_i^q, (a, b)^T \cdot \mathfrak{w}_{i + 1}^q}\Lambda_q$ by \cref{lemma: orbit-stabilizer lemma}.) Keeping the Farey neighbor identity $\mathfrak{w}_i^q \wedge \mathfrak{w}_{i + 1}^q = x_i^q y_{i + 1}^q - x_{i + 1}^q y_i^q = 1$ in mind, we have
\begin{eqnarray*}
h_{R_q(a, b)} g_{a, b} [\mathfrak{w}_i^q\ \mathfrak{w}_{i+1}^q] &=& \begin{pmatrix}1 & 0 \\ -\frac{y_i^q}{a(a x_i^q + b y_i^q)} & 1\end{pmatrix} \begin{pmatrix} a x_i^q + b y_i^q & a x_{i + 1}^q + b y_{i + 1}^q \\ a^{-1}y_i^q & a^{-1}y_{i + 1}^q\end{pmatrix} \\
 &=& \begin{pmatrix}a x_i^q + b y_i^q & a x_{i + 1}^q + b y_{i + 1}^q \\ 0 & \frac{1}{a x_i^q + b y_i^q}\end{pmatrix} \\
 &=& g_{a x_i^q + b y_i^q, a x_{i + 1}^q + b y_{i + 1}^q} \\
 &=& g_{(a, b)^T \cdot \mathfrak{w}_i^q, (a, b)^T \cdot \mathfrak{w}_{i + 1}^q}.
\end{eqnarray*}
Write $\alpha = (a, b)^T \cdot \mathfrak{w}_i^q$, and $\beta = (a, b)^T \cdot \mathfrak{w}_{i + 1}^a$. Since $T_q = \begin{pmatrix}1 & \lambda_q \\ 0 & 1\end{pmatrix}$ is in $G_q$, and $g_{\alpha, \beta} T_q^k = g_{\alpha, \beta + k \lambda_q \alpha}$ for all $k \in \ZZ$, then $g_{\alpha, \beta} \Lambda_q = g_{\alpha, \beta + k \lambda_q \alpha} \Lambda$ for all $k \in \ZZ$ by \cref{lemma: orbit-stabilizer lemma}. Taking $k_0 = \left\lfloor \frac{1 - \beta}{\lambda_q \alpha} \right\rfloor$, we get $1 - \lambda_q \alpha < \beta + k_0 \lambda_q \alpha \leq 1$. We will also see in a bit that $0 < \alpha \leq 1$ (which is equivalent to $(a, b)$ lying between the lines $\mathcal{L}_i^{q,\text{bot}}$ and $\mathcal{L}_i^{q, \text{top}}$ that we will be working with for the remainder of the proof). We thus have $h_{R_q(a, b)} g_{a, b} \Lambda_q = g_{\alpha, \beta + k_0 \lambda_q \alpha} \Lambda_q$, with $(\alpha, \beta + k_0 \lambda_q \alpha) \in \mathscr{T}^q$, and so
\begin{eqnarray*}
\BCZ_q(a, b) &=& (\alpha, \beta + k_0 \lambda_q \alpha) \\
 &=& \left((a,b)^T \cdot \mathfrak{w}_i^q, (a,b)^T \cdot \mathfrak{w}_{i+1}^q + k_i^q(a, b) \times \lambda_q \times (a, b)^T \cdot \mathfrak{w}_i^q\right)
\end{eqnarray*}
as required.

Now, for the first claim of the theorem: If $B \Lambda_q$ has a horizontal vector of length $a \in (0, 1]$, then there exists $b \in \RR$ such that $B \Lambda_q = g_{a, b} \Lambda_q$ by \cref{lemma: orbit-stabilizer lemma}. Since $T_q \in G_q$, and $g_{a,b}T_q^{\pm 1} = g_{a, b \pm \lambda_q a}$, then $B\Lambda_q = g_{a, b}\Lambda_q = g_{a, b + n \lambda_q a} \Lambda_q$ for all $n \in \ZZ$. From this follows that $B\Lambda_q = g_{(a_B, b_B)}\Lambda_q$, with $(a_B, b_B) = \left(a, b + \left\lfloor\frac{1-b}{\lambda_q a}\right\rfloor \lambda_q a\right) \in \mathscr{T}^q$ as required. It now remains to show that this identification is unique. That is, given $(a, b), (c, d) \in \mathscr{T}^q$, if $g_{a,b}\Lambda_q = g_{c, d}\Lambda_q$, then $(a, b) = (c, d)$. Now,
\begin{equation*}
g_{c,d}^{-1} g_{a,b} = \begin{pmatrix}a/c & b/c - d/a \\ 0 & c/a\end{pmatrix} \in G_q.
\end{equation*}
By the identification in \cref{proposition: column - unimodular pair identification}, we thus have $(a/c, 0)^T = g_{c,d}^{-1} g_{a,b}(1, 0)^T \in \Lambda_q$, and so $a/c = \pm 1$, from which $a = c$. We also have $(b/c - d/a, 1)^T = g_{c,d}^{-1} g_{a,b}(0, 1)^T \in \Lambda_q$. It can be easily seen from the second claim in \cref{proposition: column - unimodular pair identification} that all the points in $\Lambda_q$ at height $y = 1$ are of the form $(n\lambda_q, 1)^T = T_q^n (0, 1)^T$ with $n \in \ZZ$, and so $b/c - d/a = n\lambda_q$ for some $n_0 \in \ZZ$. That is, $b - d = n_0 \lambda_q a$. At the same time, $b, d \in (1 - \lambda_q a, 1]$, and so, since $a > 0$, we get that $b - d \in (-\lambda_q, \lambda_q)$. It now follows that $n_0 = 0$, and $b = d$.

Finally, for the second claim, and the beginning of the third claim of the theorem, we consider the lines
\begin{equation*}
\mathcal{L}_i^{q, \text{bot}} := \{(a, b) \in \RR^2 \mid (a, b)^T \cdot \mathfrak{w}_i^q = 0\},
\end{equation*}
and
\begin{equation*}
\mathcal{L}_i^{q, \text{top}} := \{(a, b) \in \RR^2 \mid (a, b)^T \cdot \mathfrak{w}_i^q = 1\}
\end{equation*}
for $i = 1, 2, \cdots, q - 1$. Note that the lines $\mathcal{L}_1^{q, \text{top}}$ and $\mathcal{L}_{q - 1}^{q, \text{top}}$ agree with the sides $\lambda_q a + b = 1$ and $b = 1$ of $\mathscr{T}^q$. We now show that for $i = 2, 3, \cdots, q - 1$, if $(a, b) \in \mathscr{T}^q$ is in $\mathscr{T}_i^q$ (i.e. above the line $\mathcal{L}_{i-1}^{q, \text{top}}$ and below, or on the line $\mathcal{L}_i^{q, \text{top}}$), then $g_{a, b} \mathfrak{w}_i^q$ belongs to the strip $S_1$, and has the smallest positive slope among the elements of $g_{a, b} \Lambda_q \cap S_1$. For any $i = 2, 3, \cdots, q- 1$, if $(a, b) \in \mathscr{T}^q$ lies in the region above the line $\mathcal{L}_i^{q, \text{bot}}$, and below or on the line $\mathcal{L}_i^{q, \text{top}}$, then the $x$-component $(a, b)^T \cdot \mathfrak{w}_i^q$ of $g_{a, b} \mathfrak{w}_i^q$ satisfies $0 < (a, b)^T \cdot \mathfrak{w}_i^q \leq 1$, and so $g_{a, b} \mathfrak{w}_i^q$ belongs to $g_{a, b}\Lambda_q \cap S_1$. As we will see in a bit, the regions $\mathscr{T}_i^q$, $i = 2, 3, \cdots, q - 1$, cover $\mathscr{T}^q$, and so $g_{a, b} \Lambda_q \cap S_1 \neq \emptyset$ for all $(a, b) \in \mathscr{T}^q$. Moreover, for any $(a, b) \in \mathscr{T}^q$, the elements of $g_{a, b}\Lambda_q \cap S_1$ do not accumulate by the discreteness of $\Lambda_q$, and so there must exist an element of $g_{a, b}\Lambda_q \cap S_1$ with smallest positive slope. This proves the second claim.

Finally, we prove that the regions in question cover the triangle $\mathscr{T}^q$, along with the first half of the third claim of the theorem. I.e., that for $i = 2, 3, \cdots, q - 1$, if $(a, b) \in \mathscr{T}^q$, then $g_{a, b} \mathfrak{w}_i^q$ has the smallest positive slope in $g_{a, b}\Lambda_q$. We break this down into three steps:
\begin{enumerate}
\item For $i = 1, 2, \cdots, q - 1$, the line segments $\mathcal{L}_i^{q, \text{top}} \cap \mathscr{T}^q$ lie above each other, and have increasing (non-positive) slopes. (That is, if $1 \leq i_1 < i_2 \leq q - 1$, then the line segment $\mathcal{L}_{i_1}^{q, \text{top}} \cap \mathscr{T}^q$ lies below the line segment $\mathcal{L}_{i_2}^{q, \text{top}} \cap \mathscr{T}^q$, and $\slope(\mathcal{L}_{i_2}^{q, \text{top}}) > \slope(\mathcal{L}_{i_1}^{q, \text{top}})$.)

\item For each $i = 2, 3, \cdots, q - 1$, the line segment $\mathcal{L}_i^{q, \text{bot}} \cap \mathscr{T}^q$ lies below the line segment $\mathcal{L}_{i - 1}^{q, \text{top}}$. (This proves the claim that the regions $\mathscr{T}_i^q$, $i = 2, 3, \cdots, q - 1$, cover $\mathscr{T}^q$.)

\item For $i = 1, 2, \cdots, q - 2$, if $(a, b) \in \mathscr{T}^q$ lies above the line $\mathcal{L}_i^{q, \text{top}}$, the the $g_{a,b}$ images of $\mathfrak{w}_i^q$ along with its $G_q$-Stern-Brocot children with $\mathfrak{w}_{i+1}^q$ have $x$-components that exceed $1$, and so are not in $g_{a, b} \Lambda_q \cap S_1$.
\end{enumerate}
The third step follows immediately from the fact that the $G_q$-Stern-Brocot children of $\mathfrak{w}_i^q$ and $\mathfrak{w}_{i+1}^q$, $i = 1, 2, \cdots, q - 2$, are all linear combinations of $\mathfrak{w}_i^q$ and $\mathfrak{w}_{i+1}^q$ with coefficients that are at least $1$. It thus remains to prove the first two steps.

For the first step: The lines $\mathcal{L}_i^{q, \text{top}}$, $i = 2, 3, \cdots, q - 1$, intersect the right side $a = 1$ of the Farey triangle $\mathscr{T}^q$ at $(1, b_i)^T$, where $b_i = \frac{1 - x_i}{y_i}$ (recall that $\mathfrak{w}_i^q = (x_i, y_i)^T$). It is easy to see that the heights $b_i$ increase as $i$ increases. (For instance, by acting on the vectors $\{\mathfrak{w}_i^q\}_{i=2}^{q-1}$, which go around the ellipse $Q_q((x, y)^T) = x^2 - \lambda_q x y + y^2 = 1$, by the linear function $T : (x, y)^T \mapsto (1 - x, y)^T$, and considering the inverse slopes of the images.) It now suffices to show that for $i = 2, 3, \cdots, q - 2$, the lines $\mathcal{L}_i^{q, \text{top}}$ and $\mathcal{L}_{i+1}^{q, \text{top}}$ intersect below on the left side $\lambda_q a + b = 1$ of the triangle $\mathscr{T}^q$ to show that the segment $\mathcal{L}_{i + 1}^{q, \text{top}} \cap \mathscr{T}^q$ lies entirely above the segment $\mathcal{L}_i^{q, \text{top}} \cap \mathscr{T}^q$, and that the former has a bigger slope than the latter. (Recall that the side $\lambda_q a + b = 1$ does \emph{not} belong to the set $\mathscr{T}^q$.) To find the sought for intersection, we solve the simultaneous system of equations $(a_0, b_0) \cdot \mathfrak{w}_i^q = 1$ and $(a_0, b_0) \cdot \mathfrak{w}_{i+1}^q = 1$, or equivalently $\begin{pmatrix}x_i^q & y_i^q \\ x_{i + 1}^q & y_{i + 1}^q\end{pmatrix} \begin{pmatrix}a_0 \\ b_0\end{pmatrix} = 1$, for $(a_0, b_0)^T$. Since $\mathfrak{w}_i^q \wedge \mathfrak{w}_{i+1}^q = x_i^q y_{i+1}^q - x_{i+1}^q y_i^q = 1$, we have $\begin{pmatrix}a_o \\ b_0\end{pmatrix} = \begin{pmatrix}y_{i+1}^q & -y_i^q \\ -x_{i+1}^q & x_i^q\end{pmatrix} \begin{pmatrix}1 \\ 1\end{pmatrix} = \begin{pmatrix}y_{i + 1}^q - y_i^q \\ -x_{i+1}^q + x_i^q\end{pmatrix}$. Recalling that $(x_{i+1}^q, y_{i+1}^q)^T = \mathfrak{w}_{i+1}^q = U_q \mathfrak{w}_i^q = (\lambda_q x_i^q - y_i^q, x_i^q)^T$, we have
\begin{eqnarray*}
\lambda_q a_0 + b_0 &=& \lambda_q (y_{i+1}^q - y_i^q) + (-x_{i+1}^q + x_i^q) \\
 &=& \lambda_q (x_i^q - y_i^q) + (-\lambda_q x_i^q + y_i^q + x_i^q) \\
 &=& (1 - \lambda_q) y_i^q + x_i^q.
\end{eqnarray*}
The intersection $(a_0, b_0)^T$ thus lies on or below $\lambda_q a + b = 1$ if $(1 - \lambda_q)y_i^q + x_i^q \leq 1$, or equivalently $\frac{x_i^q - 1}{\lambda_q - 1} \leq y_i^q$. (Recall that $\lambda_q = 2 \cos(\pi/q) \geq 1$ for $q \geq 3$.) Now we consider the ellipse $x^2 - \lambda_q x y + y^2 = 1$ and the line $y = \frac{x - 1}{\lambda_q - 1}$. The two points $\mathfrak{w}_0^q = (1, 0)^T$ and $\mathfrak{w}_1^q = (\lambda_q, 1)^T$ lie at the intersection of the aforementioned ellipse and line, and so the remaining points $\{\mathfrak{w}_i^q\}_{i=2}^{q-1}$ lie above the line $y = \frac{x - 1}{\lambda_q - 1}$, thus proving the inequality $y_i \geq \frac{x_i - 1}{\lambda_q - 1}$ for all $i = 2, 3, \cdots, q - 2$.

For the second step: If $i = 2, 3, \cdots, q - 1$, the slope of the line segment $\mathcal{L}_{i}^{q, \text{bot}}$ agrees with that of $\mathcal{L}_{i}^{q, \text{top}}$, and so exceeds that of $\mathcal{L}_{i-1}^{q, \text{top}}$. It thus suffices to show that for $i = 2, 3, \cdots, q - 1$, the lines $\mathcal{L}_i^{q, \text{bot}}$ and $\mathcal{L}_{i - 1}^{q, \text{top}}$ intersect at a point on the right of the side $a = 1$ of the triangle $\mathscr{T}^q$. Towards that end, we compare the heights $b_{i-1} = \frac{1-x_{i-1}^q}{y_{i-1}^q}$ and $b_i^\prime = \frac{-x_1^q}{y_1^q}$ at which the lines $\mathcal{L}_{i-1}^{q, \text{top}}$ and $\mathcal{L}_i^{q, \text{bot}}$ intersect the side $a = 1$ of $\mathcal{T}^q$. We have that $b_{i-1} \geq b_i^\prime$ if and only if $y_i^q \geq x_{i-1}^q y_i^q - x_i^q y_{i-1}^q = \mathfrak{w}_{i-1}^q \wedge \mathfrak{w}_i^q = 1$, which is true for all $i = 2, 3, \cdots, q - 1$ by \cref{proposition: column - unimodular pair identification}. This ends the proof.
\end{proof}

\subsection{The $h_\cdot$-Periodic Points in $\SL(2, \RR)/G_q$, and the $\BCZ_q$-Periodic Points in $\mathscr{T}^q$}

\begin{lemma}
\label{lemma: vertical vectors and periodicity}
For any $A \in \SL(2, \RR)$, the following are equivalent.
\begin{enumerate}
\item The set $A\Lambda_q$ contains a vertical vector.
\item There exists $s_0 > 0$ such that $h_{s_0}(A\Lambda_q) = A\Lambda_q$. That is, $A\Lambda_q$ is $h_\cdot$-periodic.
\item There exists $\tau_0 > 0$ such that $A\Lambda_q \cap S_{\tau_0} = \emptyset$.
\end{enumerate}
Moreover, if $A\Lambda_q$ contains a vertical vector of length $a$, then the $h_\cdot$-period of $A\Lambda_q$ is $\lambda_q a^2$.
\end{lemma}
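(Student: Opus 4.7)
The plan is to prove the cycle $(1)\Rightarrow (2) \Rightarrow (1)$ and $(1)\Rightarrow(3)\Rightarrow (2)$, extracting the period formula along the way from the first step.

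For $(1)\Rightarrow(2)$ with the period, I would use \cref{proposition: column - unimodular pair identification} to pick $B \in G_q$ with $B(1,0)^T = A^{-1}(0,a)^T$, then set $C = AB$. The constraints $C(1,0)^T = (0,a)^T$ and $\det C = 1$ force $C = \begin{pmatrix} 0 & -1/a \\ a & y\end{pmatrix}$ for some $y \in \RR$, and a direct computation gives $C^{-1}h_s C = \begin{pmatrix}1 & s/a^2 \\ 0 & 1\end{pmatrix}$. Since the only upper-triangular elements of $G_q$ with unit diagonal are the powers $T_q^n$, this matrix lies in $G_q$ exactly when $s \in \lambda_q a^2 \ZZ$; \cref{lemma: orbit-stabilizer lemma} (together with $B \in G_q$) translates this condition to $h_s A\Lambda_q = A\Lambda_q$, so the smallest positive period is $\lambda_q a^2$.

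The implication $(1)\Rightarrow(3)$ follows directly from \cref{corollary: odds and ends}(1) and the $\SL(2,\RR)$-invariance of the wedge product: for any $\mathbf{w} = (x, y)^T \in A\Lambda_q$ not proportional to $(0, a)^T$, $|ax| = |(0, a)^T \wedge \mathbf{w}| \geq 1$, so $|x| \geq 1/a$, giving $A\Lambda_q \cap S_{\tau_0} = \emptyset$ for any $\tau_0 < 1/a$. For $(2)\Rightarrow(1)$, the element $M := A^{-1}h_{s_0} A$ lies in $G_q$ by \cref{lemma: orbit-stabilizer lemma} and is parabolic (conjugate to $h_{s_0}$). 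The key input is the Fuchsian fact that $G_q$, as a Hecke triangle group of signature $(2, q, \infty)$, has a single $G_q$-equivalence class of cusps, represented by $T_q$ (whose fixed direction is $(1, 0)^T$); hence $M = B(\pm T_q)^n B^{-1}$ for some $B \in G_q$ and $n \in \ZZ$. The unique fixed direction of $M$ is then simultaneously $A^{-1}(0,1)^T$ (from $AMA^{-1} = h_{s_0}$) and $B(1, 0)^T$ (from the conjugation by $B$), forcing $AB(1,0)^T$ to be vertical; since $B(1,0)^T \in \Lambda_q$, this is the desired vertical vector in $A\Lambda_q$.

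For $(3)\Rightarrow(2)$, I show the horocycle orbit is precompact and invoke the dichotomy that non-periodic horocycle orbits on the finite-covolume quotient $\SL(2, \RR)/G_q$ escape into the cusp. Since $x$-components are $h_\cdot$-invariant, (3) gives $\|h_s \mathbf{v}\| \geq |x_\mathbf{v}| \geq \tau_0$ for every non-vertical $\mathbf{v} \in A\Lambda_q$ and every $s$, while vertical vectors are fixed by $h_\cdot$ and bounded below in length by the discreteness of $A\Lambda_q$; hence the systole of $h_s A\Lambda_q$ is uniformly bounded below in $s$, and Mahler compactness places $\{h_s AG_q\}_{s\in\RR}$ in a compact subset of $\SL(2,\RR)/G_q$. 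Since $G_q$ has only one cusp, a non-closed horocycle orbit must escape into it and hence cannot be precompact, so the orbit in question is periodic. The main obstacle is the parabolic-subgroup step in $(2)\Rightarrow(1)$, which depends on the single-cusp structure of $G_q$; a secondary concern is invoking the cusp-escape dichotomy in $(3)\Rightarrow(2)$, which can be replaced, if a self-contained treatment is preferred, by a direct finite-combinatorics argument on BCZ-map iterates trapped in the compact region $\{(a,b) \in \mathscr{T}^q : a > \tau_0\}$.
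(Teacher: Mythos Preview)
Your argument is correct, but the route differs substantially from the paper's. The paper proves the single cycle $(1)\Rightarrow(2)\Rightarrow(3)\Rightarrow(1)$ entirely with tools already developed: for $(1)\Rightarrow(2)$ it reduces, via \cref{lemma: orbit-stabilizer lemma}, to showing $A\Lambda_q = s_{1/a}h_{-s}\Lambda_q$ and then uses that $h_{\lambda_q}\in G_q$; for $(2)\Rightarrow(3)$ it observes that only finitely many horizontal lengths occur in one $h_\cdot$-period, so $x$-components are bounded away from zero; and for $(3)\Rightarrow(1)$ it argues by contradiction, using the Stern--Brocot machinery of \cref{corollary: odds and ends} to produce elements of $A\Lambda_q$ with arbitrarily small positive $x$-component whenever no vertical vector is present. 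The period formula then drops out of the relation $A\Lambda_q = s_{1/a}h_{-s}\Lambda_q$.

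Your version trades self-containment for conceptual transparency. The explicit conjugation $C^{-1}h_sC = \begin{pmatrix}1 & s/a^2\\ 0 & 1\end{pmatrix}$ gives the period $\lambda_q a^2$ more directly than the paper does, and your $(1)\Rightarrow(3)$ via the wedge-product bound of \cref{corollary: odds and ends} is slicker than the paper's $(2)\Rightarrow(3)$. The cost is that your $(2)\Rightarrow(1)$ (single $G_q$-conjugacy class of parabolics) and your $(3)\Rightarrow(2)$ (a Mahler-type compactness criterion on $\SL(2,\RR)/G_q$ together with the dichotomy that non-closed horocycles are not precompact) both import structural facts about Hecke triangle groups and horocycle dynamics that the paper has not set up and does not otherwise need here. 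The paper's $(3)\Rightarrow(1)$ via Stern--Brocot replaces both of those imports with a single elementary step; if you want to stay within the paper's framework, that is the implication worth reproving in place of your $(2)\Rightarrow(1)$ and $(3)\Rightarrow(2)$.
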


\begin{proof}
We prove $(1) \Rightarrow (2) \Rightarrow (3)$ directly, and $(3) \Rightarrow (1)$ by contradiction.

First, we note that $\Lambda_q$ is $h_\cdot$-periodic since $h_\lambda \in G_q$, and so $h_\lambda \Lambda_q = \Lambda_q$. We also note that for any $s, t \in \RR$, $\tau > 0$, and $B \in \SL(2, \RR)$ we have $h_t (s_\tau h_s B\Lambda_q) =s_\tau h_s (h_{t\tau^2} B\Lambda_q)$, and so $B\Lambda_q$ is $h_\cdot$-periodic iff $s_\tau h_s B\Lambda_q$ is $h_\cdot$-periodic.

For $(1) \Rightarrow (2)$: Let $A\Lambda_q$ contain a vertical vector $(0, a)^T$, with $a > 0$. Then $s_a A\Lambda_q$ contains the vertical vector $(0, 1)^T$. Pick any vector $\mathbf{u}_0 \in s_a A\Lambda_q$ such that $\mathbf{u}_0 \wedge (0, 1)^T = 1$ (and so the $x$-component of $\mathbf{u}_0$ is $1$). If $s = \slope(\mathbf{u}_0) \neq 0$, then $h_s\mathbf{u}_0$ is a horizontal vector with the same $x$-component as $\mathbf{u}_0$, i.e. $1$, and $h_s (0, 1)^T = (0, 1)^T$. By \cref{lemma: orbit-stabilizer lemma}, $h_s s_a A\Lambda_q = \Lambda_q$, and so $A\Lambda_q = s_\frac{1}{a} h_{-s} \Lambda_q$, from which $A\Lambda_q$ is $h_\cdot$-periodic.

For $(2) \Rightarrow (3)$: Let $\tau_1 > 0$ be such that $A\Lambda_q \cap S_{\tau_1} \neq \emptyset$. For any vector $\mathbf{u}_0 \in A\Lambda_q \cap S_{\tau_1}$, and any $s > 0$, the vector $h_s \mathbf{u}_0$ has the same $x$-component as $\mathbf{u}_0$, and $\slope(h_s \mathbf{u}_0) = \slope(\mathbf{u}_0) - s$. If $s_0$ is an $h_\cdot$-period of $A\Lambda_q$, then the set of lengths of the finitely many horizontal vectors that appear in $h_s(A\Lambda_q \cap S_{\tau_1})$ as $s$ goes from $0$ to $s_0$ agrees with the set of $x$-components of the vectors in $A\Lambda_q \cap S_{\tau_1}$. This implies that the $X$-components of vectors in $A\Lambda_q$ are bounded from below, and so there must exist a $\tau_0 > 0$ such that $A\Lambda_q \cap S_{\tau_0} = \emptyset$.

Finally, we prove $(3) \Rightarrow (1)$ by contradiction. If $A\Lambda_q$ contains no vertical vectors, then $\mathbf{v} = (0, 1)^T$ is not parallel to any vector in $A\Lambda_q$. By \cref{corollary: odds and ends}, there exists sequences of unimodular pairs $\{\mathbf{u}_{0, n}, \mathbf{u}_{1, n}\}_{n=1}^\infty$ such that for each $n \geq 2$, the vector $\mathbf{v}$ belongs to the sector $(0, \infty)\mathbf{u}_{0, n} + (0, \infty)\mathbf{u}_{1, n}$, and $\mathbf{u}_{0, n}, \mathbf{u}_{1, n}$ are $G_q$-Stern-Brocot children of $\mathbf{u}_{0, n - 1}, \mathbf{u}_{1, n - 1}$. From \cref{equation: sin inequalities}, we get
\begin{equation*}
0 < \mathbf{u}_{0, n} \wedge (0, 1)^T < \frac{1}{\|\mathbf{u}_{1, n}\|} \to 0.
\end{equation*}
That is, $A\Lambda_q$ contains vectors with arbitrarily small positive $x$-components.

Finally, if $A\Lambda_q$ contains a vertical vector of length $a$, we showed earlier in this proof that $A\Lambda_q$ must be of the form $s_\frac{1}{a} h_{-s} \Lambda_q$ for some $s \in \RR$. For any $t \in \RR$, we have $h_t\left(s_\frac{1}{a} h_{-s} \Lambda_q\right) = s_\frac{1}{a} h_{-s} \left(h_\frac{t}{a^2} \Lambda_q\right)$, which implies that the $h_\cdot$-period of $A\Lambda_q$ is $a^2$ times that of $\Lambda_q$.
\end{proof}

\begin{corollary}
\label{corollary: characterizing BCZ periodic points}
For any $(a, b) \in \mathscr{T}^q$, the following are equivalent.
\begin{enumerate}
\item The point $(a, b)$ is $\BCZ_q$-periodic.
\item The set $g_{a, b}\Lambda_q$ is $h_\cdot$-periodic.
\item The ratio $b/a$ is the (inverse) slope of a vector in $\Lambda_q$.
\end{enumerate}
\end{corollary}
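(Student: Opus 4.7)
The plan is to establish $(1)\Leftrightarrow(2)$ by iterating the BCZ--horocycle identity from \cref{theorem: G_q BCZ maps}, and then $(2)\Leftrightarrow(3)$ using \cref{lemma: vertical vectors and periodicity} together with the reflection symmetry of $\Lambda_q$ across the line $y=0$. The implication $(1)\Rightarrow(2)$ is immediate: if $\BCZ_q^n(a,b)=(a,b)$ for some $n\geq 1$, iterating the identity $h_{R_q(a,b)}g_{a,b}\Lambda_q = g_{\BCZ_q(a,b)}\Lambda_q$ a total of $n$ times gives $h_{\sigma_n}g_{a,b}\Lambda_q = g_{a,b}\Lambda_q$, where $\sigma_n = \sum_{k=0}^{n-1}R_q(\BCZ_q^k(a,b)) > 0$, so $g_{a,b}\Lambda_q$ is $h_\cdot$-periodic.

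For $(2)\Rightarrow(1)$: suppose $g_{a,b}\Lambda_q$ has $h_\cdot$-period $\tau_0 > 0$, and define $\sigma_k = \sum_{j=0}^{k-1} R_q(\BCZ_q^j(a,b))$ so that $g_{\BCZ_q^k(a,b)}\Lambda_q = h_{\sigma_k}g_{a,b}\Lambda_q$. One observes that $0=\sigma_0 < \sigma_1 < \sigma_2 < \cdots$ is the increasing enumeration of the non-negative slopes of vectors $\mathbf{w}\in g_{a,b}\Lambda_q$ with $x$-component in $(0,1]$, since the BCZ map is built from the smallest positive return time, which corresponds to the next such horizontal-vector event under $h_s$. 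The subset of such $\mathbf{w}$ with slope in $[0,\tau_0)$ lies inside the bounded rectangle $(0,1]\times[0,\tau_0)$ and is therefore finite by discreteness of $g_{a,b}\Lambda_q$; call its cardinality $N$. Because $h_{\tau_0}$ preserves $g_{a,b}\Lambda_q$ and shifts slopes by $-\tau_0$, the sequence of slopes satisfies $\sigma_{k+N} = \sigma_k + \tau_0$, whence $g_{\BCZ_q^N(a,b)}\Lambda_q = h_{\tau_0}g_{a,b}\Lambda_q = g_{a,b}\Lambda_q$, and the uniqueness of the $\mathscr{T}^q$-representative in \cref{theorem: G_q BCZ maps} forces $\BCZ_q^N(a,b) = (a,b)$.

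For $(2)\Leftrightarrow(3)$: by \cref{lemma: vertical vectors and periodicity}, $(2)$ is equivalent to $g_{a,b}\Lambda_q$ containing a vertical vector. For $(x,y)^T \in \Lambda_q$, the image $g_{a,b}(x,y)^T = (ax+by,\,a^{-1}y)^T$ is vertical iff $ax+by = 0$; the case $y=0$ would force $x=0$ (since $a>0$), contradicting $(x,y)^T \in \Lambda_q$, so $y\neq 0$ and the condition reduces to $x/y = -b/a$. Thus $(2)$ is equivalent to the existence of a vector in $\Lambda_q$ with inverse slope $-b/a$, and applying the reflection symmetry $(x,y)\mapsto(x,-y)$ of $\Lambda_q$ (noted right after the definition of $\Lambda_q$) this is in turn equivalent to $(3)$.

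The main obstacle is the implication $(2)\Rightarrow(1)$: the actual work lies in identifying the forward $\BCZ_q$-orbit of $(a,b)$ with the increasing list of horizontal-vector crossings under the horocycle flow, verifying that only finitely many crossings occur in one $h_\cdot$-period via a discreteness-in-a-bounded-region argument, and exploiting $h_{\tau_0}$-invariance of $g_{a,b}\Lambda_q$ to close the orbit. The remaining implications are short applications of the theorem and lemma already established.
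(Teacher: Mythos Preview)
Your proof is correct and follows the same overall structure as the paper's: both reduce $(2)\Leftrightarrow(3)$ to \cref{lemma: vertical vectors and periodicity} by computing when $g_{a,b}\Lambda_q$ contains a vertical vector and then invoking a symmetry of $\Lambda_q$ (you use reflection across $y=0$; the paper factors $g_{a,b}=s_a g_{1,b/a}$ and uses the $y=x$ symmetry, but these are cosmetic variations). The one substantive difference is that the paper dismisses $(1)\Leftrightarrow(2)$ as ``obvious,'' whereas you actually supply the argument for $(2)\Rightarrow(1)$---identifying the $\BCZ_q$-orbit with the sequence of horizontal-vector events under $h_\cdot$, using discreteness in a bounded box to get finitely many such events per $h_\cdot$-period, and closing the orbit via uniqueness of the $\mathscr{T}^q$-representative. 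This is the right way to justify that direction, and your write-up is more complete than the paper's at this point.
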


\begin{proof}
That the first two claims are equivalent is obvious, and so we proceed to characterize the points $(a, b) \in \mathscr{T}^q$ for which $g_{a, b}\Lambda_q$ is $h_\cdot$-periodic.

Note that $g_{a, b} = s_a g_{1, b/a}$, and that for any $s \in \RR$, $h_s (s_a g_{1, b/a}) = s_a (h_{a^2 s} g_{1, b/a})$. That is, $g_{a, b} \Lambda_q$ is $h_\cdot$-periodic if $g_{1, b/a}\Lambda_q$ is $h_\cdot$-periodic. By \cref{lemma: vertical vectors and periodicity}, $g_{1,b/a}\Lambda_q$ is $h_\cdot$-periodic iff it contains a vertical vector. Since $g_{1, b/a}$ is a horizontal shear, the set $g_{1,b/a}\Lambda_q$ contains a vertical vector exactly when $b/a$ is the inverse slope of a vector in $\Lambda_q$. The claim now follows from the symmetry of $\Lambda_q$ against the line $y = x$.
\end{proof}

\subsection{The $G_q$-Next-Term Algorithm}
\label{subsection: G_q next term algorithm}

\begin{theorem}
\label{theorem: next term algorithm}
Let $A \in \SL(2, \RR)$, $\tau > 0$ be such that $A \Lambda_q \cap S_\tau \neq \emptyset$, and $\{\mathbf{u}_n = (q_n, a_n)^T\}_{n=0}^\infty$ be elements of $A \Lambda_q \cap S_\tau$ with successive slopes. The set $s_\frac{1}{\tau} h_{\slope(\mathbf{u}_0)} A \Lambda_q$ has a horizontal vector $(q_0/\tau, 0)^T \in s_\frac{1}{\tau} h_{\slope(\mathbf{u}_0)} A \Lambda_q \cap S_1$, and hence corresponds to a unique point $(a, b) \in \mathscr{T}^q$ (i.e. $s_\frac{1}{\tau} h_{\slope(\mathbf{u}_0)} A \Lambda_q = g_{a,b}\Lambda_q$). The following are then true.

\begin{enumerate}

\item For each $n \geq 0$, the set $s_\frac{1}{\tau} h_{\slope(\mathbf{u}_n)} A \Lambda_q$ has a horizontal vector $(q_n/\tau, 0)^T \in s_\frac{1}{\tau} h_{\slope(\mathbf{u}_n)} A \Lambda_q \cap S_1$, and corresponds to $\BCZ_q^n(a, b)$.

\item If we denote the $x$-component of $\BCZ^n(a, b)$ by $L_n^q(a, b)$ for all $n \geq 0$, then the $x$-components of the vectors $\{\mathbf{u}_n\}_{n=0}^\infty$ are equal to
\begin{equation*}
q_n = \tau L_n^q(a, b) = \tau L_0^q(\BCZ_q^n(a, b)).
\end{equation*}
Moreover, the $y$-components of the vectors $\{\mathbf{u}_n\}_{n=0}^\infty$ can be recursively generated using the formula
\begin{equation*}
a_{n + 1} = q_{n + 1} \left(\frac{a_n}{q_n} + \frac{1}{\tau^2} R_q(\BCZ_q^n(a, b))\right)
\end{equation*}
for all $n \geq 0$.
\end{enumerate}
\end{theorem}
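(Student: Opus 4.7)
The plan is to argue by induction on $n$, with the base case $n=0$ supplied directly by the hypothesis. The inductive step rests on the commutation identity $h_s s_{1/\tau} = s_{1/\tau} h_{s/\tau^2}$ (a rearrangement of $h_s s_\tau = s_\tau h_{s\tau^2}$ from the Notation subsection), which shuttles between horocycle flow on $A\Lambda_q$ and on its renormalization $s_{1/\tau} h_\sigma A\Lambda_q$.

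The observation driving the argument is the following: for any $\sigma \in \RR$, a vector $\mathbf{v} = (q, a)^T \in A\Lambda_q \cap S_\tau$ has $\slope(\mathbf{v}) = \sigma$ if and only if $s_{1/\tau} h_\sigma \mathbf{v} = (q/\tau, 0)^T$ is a horizontal vector of length $q/\tau \in (0, 1]$ in $s_{1/\tau} h_\sigma A\Lambda_q$. Consequently, the horizontal vectors of length in $(0,1]$ in $s_{1/\tau} h_\sigma A\Lambda_q$ correspond bijectively, via $\mathbf{v} \mapsto s_{1/\tau} h_\sigma \mathbf{v}$, to elements of $A\Lambda_q \cap S_\tau$ of slope exactly $\sigma$, with $x$-components rescaled by $1/\tau$.

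For the inductive step, assume $s_{1/\tau} h_{\slope(\mathbf{u}_n)} A\Lambda_q = g_{\BCZ_q^n(a,b)}\Lambda_q$. By \cref{theorem: G_q BCZ maps}, $R_q(\BCZ_q^n(a,b))$ is the smallest positive $s$ for which $h_s g_{\BCZ_q^n(a,b)}\Lambda_q$ contains a horizontal vector of length in $(0,1]$, and at that value the set equals $g_{\BCZ_q^{n+1}(a,b)}\Lambda_q$. By the commutation identity, this same set equals $s_{1/\tau} h_{\slope(\mathbf{u}_n) + R_q(\BCZ_q^n(a,b))/\tau^2} A\Lambda_q$. The bijection of the previous paragraph then identifies $\slope(\mathbf{u}_n) + R_q(\BCZ_q^n(a,b))/\tau^2$ as the smallest slope strictly exceeding $\slope(\mathbf{u}_n)$ at which an element of $A\Lambda_q \cap S_\tau$ appears, which by the slope-ordering assumption on $\{\mathbf{u}_n\}$ must be $\slope(\mathbf{u}_{n+1})$. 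This verifies both that $s_{1/\tau}h_{\slope(\mathbf{u}_{n+1})}A\Lambda_q = g_{\BCZ_q^{n+1}(a,b)}\Lambda_q$ and, by reading off the length of the new horizontal vector, that $q_{n+1}/\tau = L_{n+1}^q(a,b)$. The recursion for $a_{n+1}$ then follows by substituting $\slope(\mathbf{u}_n) = a_n/q_n$ and $\slope(\mathbf{u}_{n+1}) = a_{n+1}/q_{n+1}$ into $\slope(\mathbf{u}_{n+1}) - \slope(\mathbf{u}_n) = R_q(\BCZ_q^n(a,b))/\tau^2$ and clearing the denominator.

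The main point of care will be tracking the commutation of $h_s$ with $s_{1/\tau}$ and with the $g_{\cdot,\cdot}$ matrices without arithmetic slips, since several rescalings and shears are composed. Once the bijection between horizontal vectors of length in $(0,1]$ in $s_{1/\tau} h_\sigma A\Lambda_q$ and slope-$\sigma$ vectors in $A\Lambda_q \cap S_\tau$ is in place, the identification of the $\BCZ_q$ iterates with the successive slope events, and the subsequent readout of $x$- and $y$-components, is essentially automatic from \cref{theorem: G_q BCZ maps}.
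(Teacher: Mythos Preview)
Your proposal is correct and follows essentially the same approach as the paper's proof: both use induction, both reduce the claim to the fact that $s_{1/\tau}h_{\slope(\mathbf{u}_n)}$ sends $\mathbf{u}_n$ to the short horizontal vector and $\mathbf{u}_{n+1}$ to the vector of least positive slope in $g_{\BCZ_q^n(a,b)}\Lambda_q\cap S_1$, and both then read off the roof-function identity $R_q(\BCZ_q^n(a,b))=\tau^2(\slope(\mathbf{u}_{n+1})-\slope(\mathbf{u}_n))$ to conclude. The only difference is orientation: the paper argues directly that the linear map $s_{1/\tau}h_{\slope(\mathbf{u}_n)}$ preserves the slope ordering $A\Lambda_q\cap S_\tau \to g_{c_n,d_n}\Lambda_q\cap S_1$ and hence carries the next-slope vector to the least-positive-slope vector, whereas you run the inference in reverse via the commutation identity $h_s s_{1/\tau}=s_{1/\tau}h_{s/\tau^2}$; this is a cosmetic rather than substantive distinction.
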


This motivates the following definition.

\begin{definition}
For any $A \in \SL(2, \RR)$, $\tau > 0$ with $A\Lambda_q \cap S_\tau \neq \emptyset$, and $\mathbf{u} \in A \Lambda_q \cap S_\tau$, we refer to the unique point in the Farey triangle $\mathscr{T}^q$ corresponding to $s_\frac{1}{\tau} h_{\slope(\mathbf{u})} A \Lambda_q$ from \cref{theorem: next term algorithm} as the \emph{$G_q$-Farey triangle representatitve} of the triple $(A, \tau, \mathbf{u})$, and denote it by $\FTR_q(A, \tau, \mathbf{u})$.
\end{definition}

Using this notation, we can succinctly rewrite the first claim in \cref{theorem: next term algorithm}
as
\begin{equation*}
\FTR_q(A, \tau, \mathbf{u}_n) = \BCZ_q^n\left(\FTR_q(A, \tau, \mathbf{u}_0)\right)
\end{equation*}
for all $n \geq 0$.

\begin{remark}
\label{remark: practical next term algorithm}
For any $\tau \geq 1$, the vector $\mathbf{u}_0 = (1, 0)^T$ belongs to $\Lambda_q \cap S_\tau$, and so $\FTR_q(I_2, \tau, \mathbf{u}_0)$ is well defined. We have
\begin{equation*}
s_\frac{1}{\tau} h_{\slope(\mathbf{u_0})} \Lambda_q = s_\frac{1}{\tau} \Lambda_q = s_\frac{1}{\tau} T_q^{\left\lfloor\frac{\tau}{\lambda_q}\right\rfloor} \Lambda_q = g_{\frac{1}{\tau}, \left\lfloor\frac{\tau}{\lambda_q}\right\rfloor\frac{\lambda_q}{\tau}}\Lambda_q,
\end{equation*}
with $0 < \frac{1}{\tau} \leq 1$, and $1 - \lambda_q \left( \frac{1}{\tau}\right) < \left\lfloor\frac{\tau}{\lambda_q}\right\rfloor\frac{\lambda_q}{\tau} \leq 1$, from which $\left(\frac{1}{\tau}, \left\lfloor\frac{\tau}{\lambda_q}\right\rfloor\frac{\lambda_q}{\tau}\right) \in \mathscr{T}^q$ is the $G_q$-Farey triangle representative of the triple $(\Lambda_q, \tau, \mathbf{u}_0)$. By the symmetry of $\Lambda_q$ against the lines $y = \pm x$, $x = 0$, and $y = 0$, it suffices to generate the vectors in $\Lambda_q \cap S_\tau$ with slopes in $[0, 1]$ to get all the vectors in $\Lambda_q \cap [-\tau, \tau]^2$.
\end{remark}

\begin{proof}[Proof of \cref{theorem: next term algorithm}]
For each $n \geq 0$, a direct calculation gives $s_\frac{1}{\tau} h_{\slope(\mathbf{u}_n)} \mathbf{u}_n = (q_n/\tau, 0)$, which is a horizontal vector of length not exceeding $1$ in $s_\frac{1}{\tau} h_{\slope(\mathbf{u}_n)} A\Lambda_q$. By the first claim in \cref{theorem: G_q BCZ maps}, the set $s_\frac{1}{\tau} h_{\slope(\mathbf{u}_n)} A\Lambda_q$ corresponds to a unique point $(c_n, d_n) \in \mathscr{T}^q$ with $q_n/\tau = a_n$, and $(c_0, d_0) = (a, b)$. The vectors $\mathbf{u}_n$ and $\mathbf{u}_{n+1}$ have consecutive slopes in $A\Lambda_q \cap S_\tau$, and so the two vectors $s_\frac{1}{\tau} h_{\slope(\mathbf{u}_n)} \mathbf{u}_n$ and $s_\frac{1}{\tau} h_{\slope(\mathbf{u}_n)} \mathbf{u}_{n+1}$ have consecutive slopes in $s_\frac{1}{\tau} h_{\slope(\mathbf{u}_n)} A\Lambda_q \cap S_1$. In other words, the vector $s_\frac{1}{\tau} h_{\slope(\mathbf{u}_n)} \mathbf{u}_{n+1}$ is the vector of smallest positive slope in $g_{c_n, d_n} \Lambda_q \cap S_1$, from which
\begin{eqnarray*}
R_q(c_n, d_n) &=& \slope\left(s_\frac{1}{\tau} h_{\slope(\mathbf{u}_n)} \mathbf{u}_{n+1}\right) \\
 &=& \tau^2 \left(\slope(\mathbf{u}_{n+1}) - \slope(\mathbf{u}_n)\right) \\
 &=& \tau^2 \left(\frac{a_{n+1}}{q_{n+1}} - \frac{a_n}{q_n}\right),
\end{eqnarray*}
and
\begin{equation*}
h_{R_q(c_n, d_n)} g_{c_n, d_n}\Lambda_q = s_\frac{1}{\tau} h_{\slope(\mathbf{u}_{n+1})} A\Lambda_q = g_{c_{n+1}, d_{n+1}}\Lambda_q,
\end{equation*}
and so
\begin{equation*}
\BCZ_q(c_n, d_n) = (c_{n+1}, d_{n+1})
\end{equation*}
by the second claim in \cref{theorem: G_q BCZ maps}. By induction, we get $(c_n, d_n) = \BCZ_q^n(c_0, d_0)$, $q_n = \tau c_n = \tau L_n^q(c_0, d_0)$, and the sought for recursive expression for $a_{n+1}$.

\end{proof}

\section{A Poincar\'{e} Cross Section for the Horocycle Flow on the Quotient $\SL(2, \RR)/G_q$}
\label{section: cross section}

Let $X_q$ be the homogeneous space $\SL(2, \RR)/G_q$, $\mu_q$ be the probability Haar measure on $X_q$ (i.e. $\mu_q(X_q) = 1$), and $\Omega_q$ be the subset of $X_q$ corresponding to sets $A\Lambda_q$, $A \in \SL(2, \RR)$, with a horizontal vector of length not exceeding $1$. Note that $\Omega_q$ can be identified with the Farey triangle $\mathscr{T}^q$ via $\left((a, b) \in \mathscr{T}^q\right) \mapsto (g_{a,b}G_q \in \Omega_q)$ by \cref{lemma: orbit-stabilizer lemma} and \cref{theorem: G_q BCZ maps}. Finally, let $m_q = \frac{2}{\lambda_q}dadb$ be the Lebesgue probability measure on $\mathscr{T}^q$. Following \cite{Athreya2013-ql}, we have the following.

\begin{theorem}
\label{theorem: cross section}
The triple $(\mathscr{T}^q, m_q, \BCZ_q)$, with $\mathscr{T}^q$ identified with $\Omega_q$, is a cross section to $(X_q, \mu_q, h_\cdot)$, with roof function $R_q$.
\end{theorem}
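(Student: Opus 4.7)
My plan is to verify three things: (i) the first-return map of $h_\cdot$ to $\Omega_q$ is $\BCZ_q$, (ii) the first-return time is $R_q$, and (iii) the Haar measure $\mu_q$ disintegrates on the suspension
$$\{(a,b,s) : (a,b)\in\mathscr{T}^q,\ 0 \le s < R_q(a,b)\}$$
as a constant multiple of $m_q \otimes ds$, with the constant fixed by $\mu_q(X_q) = 1$.

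Items (i) and (ii) follow almost immediately from the second and third parts of \cref{theorem: G_q BCZ maps}, which were crafted precisely so that $R_q(a,b)$ is the smallest positive $s$ for which $h_s g_{a,b}\Lambda_q$ contains a horizontal vector of length $\leq 1$, and so that the resulting lattice corresponds to $\BCZ_q(a,b)\in\mathscr{T}^q$. Combined with the uniqueness in the first part of \cref{theorem: G_q BCZ maps} and with \cref{lemma: orbit-stabilizer lemma}, this pins down the first-return structure on the bijection $\mathscr{T}^q \leftrightarrow \Omega_q$.

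Before the measure step, I would verify that $\mu_q$-almost every $h_\cdot$-orbit hits $\Omega_q$ in a discrete, non-empty time set. Using \cref{lemma: vertical vectors and periodicity}, the orbits missing $\Omega_q$ are exactly the $h_\cdot$-periodic ones, i.e., those for which $A\Lambda_q$ contains a vertical vector; this periodic locus is parameterised by the length and horizontal shear of such a vector, forming a two-parameter family of one-dimensional orbits and hence a codimension-one, $\mu_q$-null subset of the three-dimensional $X_q$. Off this null set, an accumulation $s_n \to s_*$ of return times would produce infinitely many length-$\leq 1$ horizontal vectors in $h_{s_*}g_{a,b}\Lambda_q$, forcing a vertical vector in $g_{a,b}\Lambda_q$ and contradicting non-periodicity.

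The main obstacle is item (iii). I would parameterise the suspension by $\Phi(a,b,s) = h_s g_{a,b}G_q$ and pull back $\mu_q$. In the coordinates $g = \begin{pmatrix}p & q \\ r & (1+qr)/p\end{pmatrix}$ on the open set $p > 0$ of $\SL(2,\RR)$, a direct right-invariance check shows that Haar measure is proportional to $dp\,dq\,dr/p$. Since $h_s g_{a,b} = \begin{pmatrix}a & b \\ -sa & 1/a - sb\end{pmatrix}$, the substitution $p=a,\ q=b,\ r=-sa$ gives $\Phi^*(dp\,dq\,dr/p) = da\,db\,ds$, so $\mu_q$ pulls back to $C\,da\,db\,ds$ for some constant $C > 0$, which together with (i) and (ii) furnishes the disintegration. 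The constant $C$ is then pinned down by $C\int_{\mathscr{T}^q}R_q\,da\,db = \mu_q(X_q) = 1$, and agreement with the prescribed $m_q = \frac{2}{\lambda_q}da\,db$ amounts to the explicit integral identity $\int_{\mathscr{T}^q}R_q\,da\,db = \lambda_q/2$. I would verify this identity by splitting the integral over the subregions $\mathscr{T}_i^q$ and performing the change of variables $(\alpha,\beta) = ((a,b)^T\cdot\mathfrak{w}_i^q,\ (a,b)^T\cdot\mathfrak{w}_{i+1}^q)$, whose Jacobian is the Farey neighbour wedge product $\mathfrak{w}_i^q\wedge\mathfrak{w}_{i+1}^q = 1$; the three-term recursion $\mathfrak{w}_{i+1}^q = \lambda_q\mathfrak{w}_i^q - \mathfrak{w}_{i-1}^q$ should then let the pieces telescope against the vertices of $\mathscr{T}^q$.
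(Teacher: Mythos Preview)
Your handling of (i) and (ii) matches the paper's: both simply invoke \cref{theorem: G_q BCZ maps} for the first-return data. The genuine divergence is in (iii). The paper does \emph{not} compute Haar measure in coordinates. Instead, it observes that the suspension $S_{R_q}\mathscr{T}^q$, equipped with the probability measure $dm_q^{R_q}=\frac{1}{m_q(R_q)}\,dm_q\,ds$, is $h_\cdot$-invariant and carries a non-closed horocycle (via \cref{lemma: vertical vectors and periodicity} and \cref{corollary: characterizing BCZ periodic points}); Dani--Smillie rigidity for the horocycle flow then forces $S_{R_q}\mathscr{T}^q$ to have full $\mu_q$-measure and $dm_q^{R_q}=\mu_q$. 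Your direct pullback $\Phi^*(dp\,dq\,dr/p)=da\,db\,ds$ is a perfectly valid and more elementary alternative: it avoids any appeal to ergodic theory at the cost of an explicit Jacobian computation, and it yields the $\BCZ_q$-invariance of Lebesgue measure on $\mathscr{T}^q$ for free.

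Two small corrections. First, your claimed integral identity $\int_{\mathscr{T}^q}R_q\,da\,db=\lambda_q/2$ is not needed: once you know $\mu_q$ pulls back to $C\,da\,db\,ds$, the paper's normalized suspension measure $\frac{1}{m_q(R_q)}\,dm_q\,ds$ automatically agrees with $\mu_q$ regardless of the value of $m_q(R_q)$, since both are the unique probability multiple of $da\,db\,ds$. You can drop the telescoping computation entirely. Second, your sentence ``the orbits missing $\Omega_q$ are exactly the $h_\cdot$-periodic ones'' overstates \cref{lemma: vertical vectors and periodicity}: periodic orbits can hit $\Omega_q$ (e.g.\ $\Lambda_q$ itself). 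What the lemma gives, and what you actually need, is the inclusion ``orbits missing $\Omega_q$'' $\subseteq$ ``periodic orbits'', which suffices for the null-set argument.
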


\begin{proof}
Consider the suspension space
\begin{equation*}
S_{R_q}\mathscr{T}^q := \{\left((a, b), s\right) \in \mathscr{T}^q \times \RR \mid 0 \leq s \leq R_q(a, b)\} / \sim_q,
\end{equation*}
with $\left((a, b), R_q(a, b)\right) \sim_q \left(\BCZ_q(a, b), 0\right)$ for all $(a, b) \in \mathscr{T}^q$, as a subset of $X_q$. The suspension flow of $S_{R_q}\mathscr{T}^q$ can be identified with the horocycle flow $h_\cdot$ on $S_{R_q}\mathscr{T}^q$ as a subset of $X_q$ by \cref{theorem: G_q BCZ maps}. The probability measure $dm_q^{R_q} = \frac{1}{m_q(R_q)} dm_qds$ is $h_\cdot$-invariant, and the suspension space $S_{R_q}\mathscr{T}^q$ contains non-closed horocycles (e.g by \cref{lemma: vertical vectors and periodicity} and \cref{corollary: characterizing BCZ periodic points}). By Dani-Smillie \cite{Dani1984-ji}, the subset $S_{R_q}\mathscr{T}^q$ has full $\mu_q$ measure in $X_q$, and the probability measures $dm_q^{R_q}$ and $\mu_q$ can be identified. This proves the claim.

\end{proof}

\subsection{Limiting Distributions of Farey Triangle Representatives, and Equidistribution of the Slopes of $\Lambda_q$}
\label{subsection: limiting distributions of FTR}

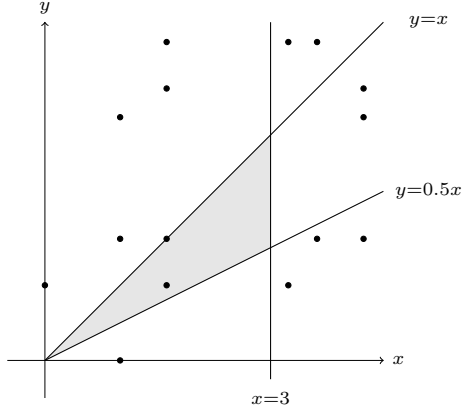
\begin{figure}
\centering
\begin{tikzpicture}
\draw[->,ultra thin] (-0.5,0)--(4.5,0) node[right]{$\scriptstyle x$};
\draw[->,ultra thin] (0,-0.5)--(0,4.5) node[above]{$\scriptstyle y$};
\fill[fill=gray!20] (0,0) -- (3,1.5) -- (3,3);
\draw [fill=black] (1, 0) circle [radius=0.1em];
\draw [fill=black] (3.236, 1) circle [radius=0.1em];
\draw [fill=black] (4.236, 1.618) circle [radius=0.1em];
\draw [fill=black] (3.618, 1.618) circle [radius=0.1em];
\draw [fill=black] (1.618, 1) circle [radius=0.1em];
\draw [fill=black] (4.236, 3.236) circle [radius=0.1em];
\draw [fill=black] (4.236, 3.618) circle [radius=0.1em];
\draw [fill=black] (1.618, 1.618) circle [radius=0.1em];
\draw [fill=black] (0, 1) circle [radius=0.1em];
\draw [fill=black] (1, 3.236) circle [radius=0.1em];
\draw [fill=black] (1.618, 4.236) circle [radius=0.1em];
\draw [fill=black] (1.618, 3.618) circle [radius=0.1em];
\draw [fill=black] (1, 1.618) circle [radius=0.1em];
\draw [fill=black] (3.236, 4.236) circle [radius=0.1em];
\draw [fill=black] (3.618, 4.236) circle [radius=0.1em];
\draw (3, -0.25) -- (3, 4.5);
\draw (0, 0) -- (4.5, 2.25);
\draw (0, 0) -- (4.5, 4.5);
\node at (3, -0.5) {$\scriptstyle x = 3$};
\node at (5.1, 2.25) {$\scriptstyle y = 0.5x$};
\node at (5.1, 4.5) {$\scriptstyle y = x$};
\end{tikzpicture}
\caption{The set $\mathcal{F}_{[0.5, 1]}(\Lambda_5, 3)$ is the collection of points from $\Lambda_5$ inside the shaded region bounded by the lines $y = 0.5x$, $y = x$, and $x  = 3$.}
\end{figure}

For any $A \in \SL(2, \RR)$, $\tau > 0$, and interval $I \subseteq \RR$, we denote by
\begin{equation*}
\mathcal{F}_I(A\Lambda_q, \tau) := \{\mathbf{u} \in A\Lambda_q \cap S_\tau \mid \slope(\mathbf{u}) \in I\}
\end{equation*}
the set of vectors in $A\Lambda_q$ with positive $x$-components not exceeding $\tau$, and slopes in $I$. If $I \subset \RR$ is a finite interval, we write
\begin{equation*}
N_I(A\Lambda_q, \tau) := \# \mathcal{F}_I(A\Lambda_q, \tau)
\end{equation*}
for the number of elements of $\mathcal{F}_I(A\Lambda_q, \tau)$. Note that if $I$ is a non-degenerate interval, then $\lim_{\tau \to \infty} N_I(A\Lambda_q, \tau) = \infty$ by the density of the slopes of $A\Lambda_q$ from \cref{corollary: odds and ends}.

For any $A \in \SL(2, \RR)$, finite, non-empty, non-degenerate interval $I \subset \RR$, and $\tau > 0$ with $\mathcal{F}_I(A\Lambda_q, \tau) = \{\mathbf{u}_i\}_{i=0}^{N_I(A\Lambda_q, \tau) - 1} \neq \emptyset$, we define the following probability measure on the Farey triangle $\mathscr{T}^q$
\begin{eqnarray*}
\rho_{A\Lambda_q, I, \tau} &:=& \frac{1}{N_I(A\Lambda_q, \tau)} \sum_{i = 0}^{N_I(A\Lambda_q, \tau) - 1} \delta_{\FTR_q(A, \tau, \mathbf{u}_i)} \\
 &=& \frac{1}{N_I(A\Lambda_q, \tau)} \sum_{i = 0}^{N_I(A\Lambda_q, \tau) - 1} \delta_{\BCZ_q^i\left(\FTR_q(A, \tau, \mathbf{u}_0)\right)}.
\end{eqnarray*}

\begin{theorem}
\label{theorem: weak limit and asymptotic growth}
Let $A \in \SL(2, \RR)$, and $I \subset \RR$ be a finite, non-empty, non-degenerate interval in $\RR$. Then as $\tau \to \infty$, the number of elements of $\mathcal{F}_I(A\Lambda_q, \tau)$ has the asymptotic growth
\begin{equation*}
N_I(A\Lambda_q, \tau) \sim \frac{|I|}{m_q(R_q)} \tau^2,
\end{equation*}
and the measures $\rho_{A\Lambda_q, I, \tau}$ converge weakly
\begin{equation*}
\rho_{A\Lambda_q, I, \tau} \rightharpoonup m_q
\end{equation*}
to the probability Lebesgue measure $m_q$.
\end{theorem}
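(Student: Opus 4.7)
The plan is to recognize $\rho_{A\Lambda_q, I, \tau}$ and $N_I(A\Lambda_q, \tau)$ as, respectively, the empirical return distribution and the return count of a horocycle orbit segment on $X_q$ to the cross section $\Omega_q \cong \mathscr{T}^q$ from \cref{theorem: cross section}, and then to invoke unipotent equidistribution on $X_q$. This is the $G_q$-analogue of the original derivation of \cite{Athreya2013-ql}.

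First I would set $x_0 := s_{1/\tau} A G_q \in X_q$ and use the identity $h_s s_{1/\tau} = s_{1/\tau} h_{s/\tau^2}$ (a rearrangement of one of the identities from the notation section). A direct calculation shows that for $\mathbf{u} = (q, a)^T \in A\Lambda_q$, the vector $h_s s_{1/\tau}\mathbf{u}$ is horizontal of length in $(0, 1]$ if and only if $\mathbf{u} \in A\Lambda_q \cap S_\tau$ and $s = \tau^2\slope(\mathbf{u})$. Under the identification $\mathscr{T}^q \cong \Omega_q$, this shows that the returns of $\{h_s x_0\}_{s \geq 0}$ to $\Omega_q$ occur at precisely the times $\{\tau^2\slope(\mathbf{u}) : \mathbf{u} \in A\Lambda_q \cap S_\tau\}$, with return positions equal to the Farey triangle representatives $\FTR_q(A, \tau, \mathbf{u})$. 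Restricting to the time window $\tau^2 I$ then puts the returns in bijection with $\mathcal{F}_I(A\Lambda_q, \tau)$, so that $N_I(A\Lambda_q, \tau)$ is exactly the return count and $\rho_{A\Lambda_q, I, \tau}$ is exactly the normalized empirical distribution of the return locations.

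Next I would invoke the Dani-Smillie equidistribution theorem for the horocycle flow on $X_q$: for every $x_0$ whose $h_\cdot$-orbit is not closed, the averages $\frac{1}{T}\int_0^T f(h_s x_0)\,ds$ converge to $\int f\,d\mu_q$ as $T \to \infty$. Pulling this back through the suspension isomorphism of \cref{theorem: cross section}---under which $\mu_q$ corresponds to $dm_q\,ds / m_q(R_q)$ on $S_{R_q}\mathscr{T}^q$---it translates directly into the statement that the number of returns of $\{h_s x_0\}$ to $\Omega_q$ up to time $T$ is asymptotic to $T / m_q(R_q)$ and that the empirical return distribution on $\mathscr{T}^q$ converges weakly to $m_q$. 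Specializing to $T = |I|\tau^2$ (with a shift of the starting time by $\tau^2 \inf I$, which does not affect either limit) yields both $N_I(A\Lambda_q, \tau) \sim |I|\tau^2 / m_q(R_q)$ and $\rho_{A\Lambda_q, I, \tau} \rightharpoonup m_q$.

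The main obstacle is the case in which $x_0$ is itself $h$-periodic, which by \cref{lemma: vertical vectors and periodicity} corresponds exactly to $A\Lambda_q$ containing a vertical vector; there Dani-Smillie a priori gives only equidistribution on the closed orbit of $x_0$. A short computation with $h_t s_{1/\tau} = s_{1/\tau} h_{t/\tau^2}$ shows that the length of the closed horocycle through $x_0$ grows like $\tau^2$ as $\tau \to \infty$, so I would finish by appealing to the classical fact---a standard consequence of mixing of the geodesic flow on $X_q$---that long closed horocycles on a finite-volume hyperbolic surface equidistribute with respect to $\mu_q$. Once this periodicity subtlety is dispatched, the rest is a bookkeeping translation between the horocycle-flow side and the cross-section side.
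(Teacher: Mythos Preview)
Your overall strategy---identifying $\rho_{A\Lambda_q,I,\tau}$ with the empirical distribution of returns of a horocycle segment to the section $\Omega_q$, and deducing the theorem from an equidistribution statement on $X_q$---is exactly the route the paper takes. However, the equidistribution input you name is not the right one. Dani--Smillie \cite{Dani1984-ji} asserts that for a \emph{fixed} non-periodic point $x_0$, the measures $\frac{1}{T}\int_0^T \delta_{h_s x_0}\,ds$ converge to $\mu_q$ as $T\to\infty$. But your $x_0 = s_{1/\tau}AG_q$ depends on $\tau$, and after the conjugation $h_s s_{1/\tau} = s_{1/\tau} h_{s/\tau^2}$ that you yourself perform, the segment $\{h_s x_0 : s\in\tau^2 I\}$ is precisely the geodesic translate $s_{1/\tau}\{h_t AG_q : t\in I\}$ of a \emph{fixed-length} horocycle arc. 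No large-$T$ limit along a single orbit is occurring; rather, a bounded arc is being pushed by the diagonal flow. Dani--Smillie provides no uniformity in the base point and does not cover this situation.

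The paper makes this same change of variables and then invokes the correct input: for any $A$, one has $\int_\alpha^\beta f(s_{1/\tau} h_t AG_q)\,dt \to (\beta-\alpha)\mu_q(f)$ as $\tau\to\infty$, i.e.\ equidistribution of expanding translates of a fixed horocycle arc, citing \cite{Kleinbock1996-ag}. This holds for every $A$, so no periodic/non-periodic case split is needed and your last paragraph becomes unnecessary. It is worth noting that your proposed fix in the periodic case---that long closed horocycles equidistribute, via mixing of the geodesic flow---is in spirit exactly the right mechanism; one simply needs its open-arc analogue (equidistribution of geodesic pushes of horocycle pieces) in place of Dani--Smillie, after which the argument goes through uniformly.
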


\begin{corollary}
\label{corollary: equidistribution of slopes}
For any $A \in \SL(2, \RR)$, and any finite interval $\emptyset \neq I \subset \RR$, the slopes of the vectors $\mathcal{F}_I(A\Lambda_q, \tau)$ equidistribute in $I$ as $\tau \to \infty$.
\end{corollary}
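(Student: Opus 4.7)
The plan is to obtain the corollary as an immediate consequence of the asymptotic count already established in \cref{theorem: weak limit and asymptotic growth}. Equidistribution of the slopes of the vectors in $\mathcal{F}_I(A\Lambda_q, \tau)$ in $I$ means that the empirical probability measure
\[
\frac{1}{N_I(A\Lambda_q, \tau)} \sum_{\mathbf{u} \in \mathcal{F}_I(A\Lambda_q, \tau)} \delta_{\slope(\mathbf{u})}
\]
converges weakly, as $\tau \to \infty$, to the uniform probability measure $\frac{1}{|I|}\,d\ell$ on $I$. Since the collection of subintervals generates the Borel $\sigma$-algebra on $I$, it suffices to show that for every subinterval $J \subseteq I$,
\[
\lim_{\tau \to \infty} \frac{\#\{\mathbf{u} \in \mathcal{F}_I(A\Lambda_q,\tau) : \slope(\mathbf{u}) \in J\}}{N_I(A\Lambda_q, \tau)} = \frac{|J|}{|I|}.
\]

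Because $J \subseteq I$, the numerator is precisely $N_J(A\Lambda_q, \tau)$. The main step is then to apply \cref{theorem: weak limit and asymptotic growth} separately to the intervals $I$ and $J$ (assuming $J$ is non-empty and non-degenerate), obtaining
\[
N_I(A\Lambda_q, \tau) \sim \frac{|I|}{m_q(R_q)}\,\tau^2 \quad \text{and} \quad N_J(A\Lambda_q, \tau) \sim \frac{|J|}{m_q(R_q)}\,\tau^2,
\]
and dividing the two asymptotics yields the claimed limit $|J|/|I|$.

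The only delicate point is the treatment of degenerate $J$, i.e.\ $J = \{s\}$ a single slope, to which the asymptotic growth theorem does not directly apply. Here I would invoke the first item of \cref{corollary: odds and ends}: any two non-parallel vectors of $\Lambda_q$ have wedge product of absolute value at least $1$, and since $A \in \SL(2,\RR)$ preserves wedge products, the same holds for $A\Lambda_q$. Consequently, two distinct vectors of $A\Lambda_q$ that share a slope must be of the form $\pm\mathbf{u}$, and within the strip $S_\tau$ (whose vectors have positive $x$-component) the slopes of distinct vectors are distinct. Therefore $\#\{\mathbf{u}\in \mathcal{F}_I(A\Lambda_q,\tau) : \slope(\mathbf{u})=s\}\le 1$, which contributes $o(1)$ to the empirical measure since $N_I(A\Lambda_q,\tau) \to \infty$. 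No step of the argument presents any real obstacle: once \cref{theorem: weak limit and asymptotic growth} is in hand, this corollary is essentially a book-keeping exercise.
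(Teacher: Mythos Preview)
Your proposal is correct and follows essentially the same approach as the paper: the paper simply notes that for any non-empty subinterval $J \subseteq I$ one has $\lim_{\tau\to\infty} N_J(A\Lambda_q,\tau)/N_I(A\Lambda_q,\tau) = |J|/|I|$ by the asymptotic count in \cref{theorem: weak limit and asymptotic growth}, and declares this to be the desired equidistribution. Your version is slightly more detailed in that you spell out why checking subintervals suffices and you separately dispose of degenerate $J$ via \cref{corollary: odds and ends}, but the core argument is identical.
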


\begin{proof}[Proof of \cref{theorem: weak limit and asymptotic growth} and \cref{corollary: equidistribution of slopes}]

For $\tau > 0$ with $\mathcal{F}_I(A\Lambda_q, \tau) \neq \emptyset$, we define the measures
\begin{equation*}
\sigma_{A\Lambda_q, I, \tau} = \frac{N_I(A\Lambda_q, \tau)}{\tau^2} \rho_{A\Lambda_q, I, \tau}
\end{equation*}
on the Farey triangle $\mathscr{T}^q$. Denote by $d\sigma_{A\Lambda_q, I, \tau}^{R_q}$ the measure $d\sigma_{g\Lambda_q, I, \tau}ds$ on the suspension space $S_{R_q}\mathscr{T}^q$ (which can identified with $X_q$ by \cref{theorem: cross section}). In what follows, we denote the elements of $\mathcal{F}_I(A\Lambda_q, \tau)$ by $\{\mathbf{u}_i = \mathbf{u}_i(A\Lambda_q, I, \tau)\}_{i=0}^{N_I(A\Lambda_q, \tau) - 1}$, and write $\mathbf{u}_{N_I(A\Lambda_q, \tau)} = \mathbf{u}_{N_I(A\Lambda_q, \tau)}(A\Lambda_q, I, \tau)$ for the element of $A\Lambda_q \cap S_\tau$ of smallest slope bigger than any value in $I$. By the density of the slopes of $A\Lambda_q$ from \cref{corollary: odds and ends}, we have that $\slope(\mathbf{u}_0)$ and $\slope(\mathbf{u}_{N_I(A\Lambda_q, \tau)})$ converge to the end points of the interval $I$, which we denote $\alpha$ and $\beta$ (i.e. $|I| = \beta - \alpha$). We show the convergence $\sigma_{A\Lambda_q, I, \tau} \rightharpoonup |I|/m_q(R_q) m_q$ by proving the convergence $\sigma_{A\Lambda_q, I, \tau}^{R_q} \rightharpoonup |I| \mu_q$. Given any continuous, bounded function $f : X_q \to \RR$, we have
\begin{eqnarray*}
\sigma_{g\Lambda_q, I, \tau}^{R_q}(f) &=& \frac{1}{\tau^2} \int_{\tau^2 \slope(\mathbf{u}_0)}^{\tau^2 \slope(\mathbf{u}_{N_I(A\Lambda_q, \tau)})} f\left(h_s \left(s_\frac{1}{\tau}AG_q\right)\right)\,ds \\
 &=& \frac{1}{\tau^2} \int_{\tau^2 \slope(\mathbf{u}_0)}^{\tau^2 \slope(\mathbf{u}_{N_I(A\Lambda_q, \tau)})} f\left(s_\frac{1}{\tau} h_\frac{s}{\tau^2}AG_q\right)\,ds \\
 &=& \int_{\slope(\mathbf{u}_0)}^{\slope(\mathbf{u}_{N_I(A\Lambda_q, \tau)})} f\left(s_\frac{1}{\tau} h_t AG_q\right)\,dt \\
 &=& \int_\alpha^\beta f\left(s_\frac{1}{\tau} h_t AG_q\right)\,dt + o(1) \\
 &\to& (b - a) \mu_q(f)
\end{eqnarray*}
as $\tau \to \infty$ (with the convergence of the measures supported on horocycles following from, for example, \cite[2.2.1]{Kleinbock1996-ag}). This proves the weak convergence $\sigma_{g\Lambda_q, I, \tau}^{R_q} \rightharpoonup |I| \mu_q$. Denoting by $\pi_q : S_{R_q}\mathscr{T}^q \to \mathscr{T}^q$ the projection map $\left(\left((a, b), s\right) \in S_{R_q}\mathscr{T}^q\right) \mapsto \left((a, b) \in \mathscr{T}^q\right)$, we thus have
\begin{equation*}
\sigma_{g\Lambda_q, I, \tau} = \frac{1}{R_q} (\pi_q)_\ast \sigma_{g\Lambda_q, I, \tau}^{R_q} \rightharpoonup \frac{|I|}{R_q} (\pi_q)_\ast \mu_q = \frac{|I|}{m_q(R_q)} m_q.
\end{equation*}
From $\rho_{A\Lambda_q, I, \tau}(\mathscr{T}^q) = 1$, we get
\begin{equation*}
\lim_{\tau \to \infty} \frac{N_I(A\Lambda_q, \tau)}{\tau^2} = \lim_{\tau \to \infty} \sigma_{A\Lambda_q, I, \tau}(\mathscr{T}^q) = \frac{|I|}{m_q(R_q)} m_q(\mathscr{T}^q) = \frac{|I|}{m_q(R_q)},
\end{equation*}
which is the asymptotic growth from \cref{theorem: weak limit and asymptotic growth}. This also gives the weak limit $\rho_{A\Lambda_q, I, \tau} \rightharpoonup m_q$.

As for \cref{corollary: equidistribution of slopes}, if $\emptyset \neq J \subseteq I$ is any non-empty subinterval of $I$, we have
\begin{equation*}
\lim_{\tau \to \infty} \frac{N_J(A\Lambda_q, \tau)}{N_I(A\Lambda_q, \tau)} = \frac{|J|}{|I|},
\end{equation*}
which proves the sought for equidistribution.
\end{proof}

\section{Applications}
\label{section: applications}

In this section, we give a few applications of the $G_q$-BCZ maps to the statistics of subsets of $\Lambda_q$. In \cref{subsection: counting in triangles}, we derive the main asymptotic term for the number of vectors of $\Lambda_q$ in homothetic dilations of triangles. In \cref{subsubsection: slopegap distribution}, we derive the distribution of the slope gaps of $\Lambda_q$. Finally, in \cref{subsubsection: centdist distribution}, we derive the distribution of the Euclidean distances between the centers of $G_q$
-Ford circles. Several other applications of the $G_3$-BCZ map to the statistics of the visible lattice points $\Lambda_3 = \ZZ_\text{prim}^2 = \{(x, y) \in \ZZ^2 \mid \gcd(x, y) = 1\}$ can be similarly extended--almost verbatim--to general $\Lambda_q$. This list includes, but is not limited to, an old Diophantine approximation problem of \cite{Erdos1958-xi} Erd\"{o}s, P., Sz\"{u}sz, P., \& Tur\'{a}n solved independently by Xiong and Zaharescu \cite{Xiong2006-ef}, and Boca \cite{Boca2008-bu} for $G_3 = \SL(2, \ZZ)$, and Heersink \cite{Heersink2016-hg} for finite index subgroups of $G_3 = \SL(2, \ZZ)$; the average depth of cusp excursions of the horocycle flow on $X_2 = \SL(2, \RR)/G_3$ by Athreya and Cheung \cite{Athreya2013-ql}; and the statistics of weighted Farey sequences by Panti \cite{Panti2015-jz}.

\subsection{Asymptotic Growth of the Number of Elements of $\Lambda_q$ in Homothetic Dilations of Triangles}
\label{subsection: counting in triangles}

For any $A \in \SL(2, \RR)$, $\tau > 0$, and finite interval $I \subset \RR$, the set $\mathcal{F}_I(A\Lambda_q, \tau)$ introduced in \cref{subsection: limiting distributions of FTR} is the collection of points of $A\Lambda_q$ which belong to the triangle $\{(x, y)^T \in \RR^2 \mid y/x \in I, 0 < x \leq \tau\}$. We have the main term for the asymptotic growth rate of the number of aforementioned vectors $N_I(A\Lambda_q, \tau)$ as $\tau \to \infty$, which can be immediately interpreted as a statement on the asymptotic growth of the number of vectors of $A\Lambda_q$ in homothetic dilations of triangles that have a vertex at the origin as we do in \cref{proposition: counting in triangles}. In \cref{corollary: equidistribution in the square}, we show the equidistribution of the homothetic dilations $\frac{1}{\tau}\Lambda_q$ in the square $[-1, 1]^2$ as $\tau \to \infty$. In what follows, we write $f(\tau) \sim g(\tau)$ as $\tau \to \infty$ for any two functions $f, g$ to indicate that $\lim_{\tau \to \infty} f(\tau)/g(\tau) = 1$.

\begin{proposition}
\label{proposition: counting in triangles}
Let $\Delta$ be a triangle in the plane $\RR^2$ with one vertex at the origin. Then for any $A \in \SL(2, \RR)$, and any $\tau > 0$ the number of elements $\#\left(A\Lambda_q \cap \tau \Delta \right)$ has the asymptotic growth rate
\begin{equation*}
\#\left(A\Lambda_q \cap \tau \Delta\right) \sim \left(\frac{2}{m_q(R_q)} \area(\Delta)\right) \tau^2
\end{equation*}
as $\tau \to \infty$.
\end{proposition}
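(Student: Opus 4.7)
The plan is to reduce a general triangle $\Delta$ with vertex at the origin to a canonical \emph{wedge triangle} $W(c, \alpha, \beta)$---by which I mean the closed triangle with vertices $O$, $(c, \alpha c)^T$, $(c, \beta c)^T$ for some $c > 0$ and $\alpha < \beta$---and then to read off the asymptotic directly from \cref{theorem: weak limit and asymptotic growth}. I would first handle the wedge case, which is essentially built into that theorem: since $O \notin A\Lambda_q$ and $\tau W(c, \alpha, \beta)$ meets the $y$-axis only at the origin, one has the set-theoretic identity $A\Lambda_q \cap \tau W(c, \alpha, \beta) = \mathcal{F}_{[\alpha, \beta]}(A\Lambda_q, c\tau)$. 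Together with $\area(W(c, \alpha, \beta)) = \tfrac{1}{2} c^2 (\beta - \alpha)$, \cref{theorem: weak limit and asymptotic growth} immediately yields
\begin{equation*}
\#(A\Lambda_q \cap \tau W(c, \alpha, \beta)) = N_{[\alpha, \beta]}(A\Lambda_q, c\tau) \sim \frac{\beta - \alpha}{m_q(R_q)}\,c^2\,\tau^2 = \frac{2\,\area(W(c, \alpha, \beta))}{m_q(R_q)}\,\tau^2.
\end{equation*}

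Next I would handle a general $\Delta = OV_1V_2$ by bringing it to wedge form via an element of $\SL(2, \RR)$. After permuting $V_1$ and $V_2$ if necessary, I may assume $V_1 \wedge V_2 > 0$. Choosing any $c > 0$ and $\alpha < \beta$ with $c^2(\beta - \alpha) = V_1 \wedge V_2$, I set $V_1' := (c, \alpha c)^T$ and $V_2' := (c, \beta c)^T$ and define the linear map
\begin{equation*}
B := [V_1'\ V_2']\,[V_1\ V_2]^{-1}.
\end{equation*}
By construction $B V_i = V_i'$ for $i = 1, 2$ and $\det B = (V_1' \wedge V_2')/(V_1 \wedge V_2) = 1$, so $B \in \SL(2, \RR)$. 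Consequently $B\Delta = W(c, \alpha, \beta)$, $\area(B\Delta) = \area(\Delta)$, and setting $A' := BA \in \SL(2, \RR)$ gives
\begin{equation*}
\#(A\Lambda_q \cap \tau\Delta) = \#(A'\Lambda_q \cap \tau W(c, \alpha, \beta)),
\end{equation*}
which reduces the general case to the wedge case already handled.

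The main step---if one insists on calling it an obstacle---is really the linear-algebraic observation that $\SL(2, \RR)$ acts transitively on pairs of vectors with a fixed positive wedge product, which is what lets us transport any triangle with a vertex at the origin to a wedge triangle without paying a Jacobian. I do not anticipate any serious boundary accounting either: the strict inequality $0 < x$ in the definition of $\mathcal{F}_{[\alpha, \beta]}$ omits only the origin (which is not in $A\Lambda_q$), and the closed slope interval $[\alpha, \beta]$ already captures both slanted edges of the wedge, so the set-theoretic identity above holds exactly rather than up to a negligible error term.
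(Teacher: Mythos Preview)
Your proof is correct and takes essentially the same approach as the paper: both reduce to a triangle whose side opposite the origin is vertical via an $\SL(2,\RR)$ transformation (the paper uses a rotation, you a general element constructed from the vertices) and then invoke \cref{theorem: weak limit and asymptotic growth}. The paper additionally verifies that including or excluding boundary edges does not change the main term---a point you dismiss informally, but which is indeed routine once the closed case is settled.
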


We also get the following.

\begin{corollary}
\label{corollary: equidistribution in the square}
For any $A \in \SL(2, \RR)$, and $\tau \geq 1$, let $\lambda_\tau^{A\Lambda_q}$ be the probability measure defined for any Borel subset $C$ of the square $[-1, 1]^2$ by
\begin{equation*}
\lambda_\tau^{A\Lambda_q}(C) = \frac{\#(\frac{1}{\tau}A\Lambda_q \cap C)}{\#\left(\frac{1}{\tau}A\Lambda_q \cap [-1, 1]^2\right)}.
\end{equation*}
Then the measures $\lambda_\tau^{A\Lambda_q}$ converge weakly to the Lebesgue probability measure $\Unif_{[-1, 1]^2}$ on $[-1, 1]^2$ as $\tau \to \infty$.
\end{corollary}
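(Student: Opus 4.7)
The plan is to reduce the claim to \cref{proposition: counting in triangles} by an inclusion-exclusion argument. By the Portmanteau theorem, the weak convergence $\lambda_\tau^{A\Lambda_q} \rightharpoonup \Unif_{[-1,1]^2}$ will follow once I show that $\lambda_\tau^{A\Lambda_q}(R) \to \area(R)/4$ for every axis-aligned rectangle $R \subset [-1,1]^2$, since such rectangles form a $\pi$-system generating the Borel $\sigma$-algebra on $[-1,1]^2$ and all have Lebesgue-null boundary.

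The first step is to extend \cref{proposition: counting in triangles} from triangles with a vertex at the origin to arbitrary polygons. Given a simple polygon $P \subset \RR^2$ with vertices $v_1, \ldots, v_n$ listed counter-clockwise, the signed fan decomposition from the origin writes
\begin{equation*}
\mathbf{1}_P = \sum_{i=1}^n \epsilon_i \mathbf{1}_{T_i}
\end{equation*}
up to a Lebesgue-null set, where $T_i$ is the triangle with vertices $0, v_i, v_{i+1}$ and $\epsilon_i \in \{-1,+1\}$ is determined by whether the directed edge $(v_i, v_{i+1})$ is seen counter-clockwise from the origin. Summing the corresponding lattice-point counts, I apply \cref{proposition: counting in triangles} to each $T_i$, noting that the total contribution from points lying on the boundaries of the $\tau T_i$ is $O(\tau)$ (each boundary segment has length $O(\tau)$, and the discreteness of $A\Lambda_q$ together with the linear density of vectors along any fixed line gives $O(\tau)$ points per segment). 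This yields
\begin{equation*}
\#(A\Lambda_q \cap \tau P) \sim \frac{2}{m_q(R_q)} \area(P)\, \tau^2.
\end{equation*}
Applying this with $P = [-1,1]^2$ (area $4$) for the denominator and with $P = R$ for the numerator, and taking the ratio, gives $\lambda_\tau^{A\Lambda_q}(R) \to \area(R)/4 = \Unif_{[-1,1]^2}(R)$, which is the sought equidistribution.

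The main obstacle is the careful treatment of boundary points in the polygon extension: I need to know that the erroneous (multiple) counts along the shared edges of the $\tau T_i$ are asymptotically negligible compared to the main term. This comes down to bounding, for any fixed line $\ell \subset \RR^2$, the number of points of $A\Lambda_q$ lying on $\ell \cap B(0, \tau)$ by $O(\tau)$. Such a bound follows either from the discreteness of $\Lambda_q$ along the line $A^{-1}\ell$ (together with an equidistribution-of-slopes argument as in \cref{corollary: equidistribution of slopes}) or, more slickly, by applying \cref{theorem: weak limit and asymptotic growth} to a family of arbitrarily thin triangular sectors sandwiching $\ell$. Once this linear boundary bound is in hand, the rest of the argument is the routine bookkeeping outlined above.
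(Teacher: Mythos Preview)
Your proposal is correct and essentially matches the paper's proof: both decompose axis-aligned rectangles as signed sums of triangles with a vertex at the origin, apply \cref{proposition: counting in triangles}, and then pass to weak convergence (the paper via an explicit Riemann-sum approximation of continuous functions over a fine rectangular partition, you via Portmanteau). The boundary-segment estimate you worry about is already handled inside the proof of \cref{proposition: counting in triangles}, which shows $\#(A\Lambda_q \cap \tau L) = o(\tau^2)$ for any line segment $L$ by sandwiching it between thin triangles, so your separate $O(\tau)$ bound is stronger than needed and you can simply cite that step.
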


\begin{proof}[Proof of \cref{proposition: counting in triangles}]
We first prove the theorem assuming that the side $L$ of the triangle $\Delta$ opposite to the origin is included in $\Delta$. Let $\operatorname{rot}_\Delta \in \SL(2, \RR)$ be the rotation that rotates the side of $\Delta$ opposite to the vertex at the origin onto a vertical line segment. (That is, the side of $\operatorname{rot}_\Delta \Delta$ opposite to the vertex at the origin is vertical.) Denote by $d_\Delta > 0$ the perpendicular distance from the vertex at the origin to the side of $\operatorname{rot}_\Delta \Delta$ opposite to the aforementioned vertex, and by $I_\Delta \subset \RR$ the interval of slopes of the points in $\operatorname{rot}_\Delta \Delta$. For any $\tau > 0$, we have that $\tau (\operatorname{rot}_\Delta \Delta) \cap (\operatorname{rot}_\Delta A \Lambda_q) = \mathcal{F}_I(\operatorname{rot}_\Delta A \Lambda_q, \tau d)$, and that the rotation $\operatorname{rot}_\Delta$ is a bijection from $\tau \Delta \cap A \Lambda_q$ to $\tau (\operatorname{rot}_\Delta \Delta) \cap (\operatorname{rot}_\Delta A \Lambda_q)$. From this and \cref{theorem: weak limit and asymptotic growth} follows that
\begin{equation*}
\#\left(A \Lambda_q \cap \tau \Delta\right) = N_{I_\Delta}(\operatorname{rot}_\Delta A \Lambda_q, \tau d) \sim \frac{|I_\Delta|}{m_q(R_q)} (\tau d_\Delta)^2 = \frac{2}{m_q(R_q)} \area(\Delta) \tau^2
\end{equation*}
which proves the claim.

Including or excluding any of the two sides of the triangle $\Delta$ that pass through the origin does not change $|I_\Delta|$, and hence the main term for the asymptotic growth in question remains the same. We now show that the main term does not change when the side $L$ of $\Delta$ opposite to the origin  is removed as well. For any $\delta > 0$, denote by $\Delta^\prime = \Delta^\prime(\Delta, \delta)$ the homothetic dilation of $\Delta$ such that $0 < \area(\Delta) - \area(\Delta^\prime) \leq \delta$. The line segment $L$ belongs to $\Delta \setminus \Delta^\prime$. By the above, $\lim_{\tau \to \infty} \left(\#(A\Lambda_q \cap \tau \Delta) - \#(A\Lambda_q \cap \tau \Delta^\prime)\right)/\tau^2 = 2\left(\area(\Delta) - \area(\Delta^\prime)\right)/m_q(R_q) \leq 2\delta/m_q(R_q)$. It thus follows that for all $\epsilon > 0$, there exists $\tau_0 = \tau_0(A\Lambda_q, \Delta, \delta, \epsilon)$ such that for all $\tau > \tau_0$ we have
\begin{equation*}
\frac{\#\left(A\Lambda_q \cap \tau L\right)}{\tau^2} \leq \frac{\#(A\Lambda_q \cap \tau \Delta) - \#(A\Lambda_q \cap \tau \Delta^\prime)}{\tau^2} \leq \frac{2\delta}{m_q(R_q)} + \epsilon.
\end{equation*}
By the arbitrariness of $\delta$ and $\epsilon$, we get $\lim_{\tau \to \infty} \frac{\#\left(A\Lambda_q \cap \tau L\right)}{\tau^2} = 0$. This proves that adding or removing a finite number of line segments does not affect the main term for the asymptotic growth of the number of elements of $A\Lambda_q$ in homothetic dilations of triangles.
\end{proof}

\begin{proof}[Proof of \cref{corollary: equidistribution in the square}]
That the set functions $\lambda_\tau^{A\Lambda_q}$ are probability measures on $[-1, 1]^2$ is clear. We proceed to prove that they converge weakly to $\Unif_{[-1, 1]}$.

First, we note that given any rectangle $\mathcal{R}$ in the plane, we can express $\mathcal{R}$ using the union and/or difference of four triangles each having a vertex at the origin. From this follows that $\lim_{\tau \to \infty} \frac{\#(A\Lambda_q \cap \tau \mathcal{R})}{\tau^2} = \frac{2}{m_q(R_q)} \area(\mathcal{R})$. Consequently, if $\mathcal{R}$ belongs to $[-1,1]^2$, then $\lim_{\tau \to \infty} \lambda_\tau^{A\Lambda_q}(\mathcal{R}) = \Unif_{[-1, 1]^2}(\mathcal{R})$.

Fix a continuous function $f : [-1, 1]^2 \to \RR$. Given a $\delta > 0$, there exists a finite partition $\mathscr{P} = \mathscr{P}(f, \delta)$ of the square $[-1, 1]^2$ into rectangles such that the difference between the supremum and infimum of $f$ over each of the rectangles in the partition does not exceed $\delta$. That is, $\sup_\mathcal{R}(f) \leq \inf_\mathcal{R}(f) + \delta$ for all $\mathcal{R} \in \mathscr{P}$. (This is possible by the uniform continuity of $f$ over $[-1, 1]^2$.) Given $\epsilon > 0$, there exists $\tau_0 = \tau_0(A\Lambda_q, \mathscr{P}(f, \delta), \epsilon)$ such that $\left|\lambda_\tau^{A\Lambda_q}(\mathcal{R}) - \Unif_{[-1, 1]^2}(\mathcal{R})\right| \leq \epsilon$ for all $\tau > \tau_0$, and all $\mathcal{R} \in \mathscr{P}$. We thus have
\begin{eqnarray*}
\lambda_\tau^{A\Lambda_q}(f) &\leq& \sum_{\mathcal{R} \in \mathscr{P}} \sup_\mathcal{R}(f)\ \lambda_\tau^{A\Lambda_q}(\mathcal{R}) \\
 &\leq& \epsilon \max_{[-1,1]^2}(|f|) \#\mathscr{P} + \sum_{\mathcal{R} \in \mathscr{P}} \sup_{\mathcal{R}}(f) \Unif_{[-1, 1]^2}(\mathcal{R}) \\
 &\leq& \epsilon \max_{[-1,1]^2}(|f|) \#\mathscr{P} + \delta + \sum_{\mathcal{R} \in \mathscr{P}} \inf_\mathcal{R}(f) \Unif_{[-1, 1]^2}(\mathcal{R}) \\
 &\leq& \epsilon \max_{[-1,1]^2}(|f|) \#\mathscr{P}(f, \delta) + \delta + \Unif_{[-1, 1]^2}(f).
\end{eqnarray*}
Similarly
\begin{eqnarray*}
\lambda_\tau^{A\Lambda_q}(f) &\geq& \sum_{\mathcal{R} \in \mathscr{P}} \inf_\mathcal{R}(f)\ \lambda_\tau^{A\Lambda_q}(\mathcal{R}) \\
 &\geq& -\epsilon \max_{[-1,1]^2}(|f|) \#\mathscr{P} + \sum_{\mathcal{R} \in \mathscr{P}} \inf_\mathcal{R}(f)\Unif_{[-1,1]^2}(\mathcal{R}) \\
 &\geq& -\epsilon \max_{[-1,1]^2}(|f|) \#\mathscr{P} - \delta + \sum_{\mathcal{R} \in \mathscr{P}} \sup_{\mathcal{R}}(f)\Unif_{[-1,1]^2}(\mathcal{R}) \\
 &\geq& -\epsilon \max_{[-1,1]^2}(|f|) \#\mathscr{P}(f, \delta) - \delta + \Unif_{[-1, 1]^2}(f).
\end{eqnarray*}
That is, $\left|\lambda_\tau^{A\Lambda_q}(f) - \Unif_{[-1,1]^2}(f)\right| \leq \epsilon \max_{[-1,1]^2}(|f|) \#\mathscr{P}(f, \delta) + \delta$. By the arbitrariness of $\delta$ and $\epsilon$, we get $\lim_{\tau \to \infty} \lambda_\tau^{A\Lambda_q}(f) = \Unif_{[-1, 1]^2}(f)$. This proves the claim.
\end{proof}

\subsection{$G_q$-Farey Statistics}
\label{subsection: G_q Farey statistics}

In \cref{proposition: limiting distributions of Farey statistics} below we derive the limiting distribution of quantities that can be expressed as functions in the $G_q$-Farey triangle representatives (\cref{subsection: G_q next term algorithm}) of the elements of the sets $\mathcal{F}_I(A\Lambda_q, \tau)$ (\cref{subsection: limiting distributions of FTR}) as $\tau \to \infty$. As examples of said distributions, we consider the slope gap distribution of $\Lambda_q$ in \cref{subsubsection: slopegap distribution}, and the distribution of the Euclidean distance between $G_q$-Ford circles in \cref{subsubsection: centdist distribution}.

\begin{proposition}
\label{proposition: limiting distributions of Farey statistics}
Let $F : \mathscr{T}^q \to \RR$ be a function, continuous on the Farey triangle $\mathscr{T}^q$ except perhaps on the image of finitely many $C^1$ curves $\{c_i : I_i \to \mathscr{T}^q\}_{i=1}^m$, with $I_i \subset \RR$ being finite, closed intervals of $\RR$. For any $A \in \SL(2, \RR)$, and any finite interval $I \subset \RR$, the limit of the distribution
\begin{equation*}
\FareyStat_{F,\tau}^{A\Lambda_q,I}(t) := \frac{\# \left\{\mathbf{u} \in \mathcal{F}_I(A\Lambda_q, \tau) \mid F\left(\FTR_q(A \Lambda_q, \tau, \mathbf{u})\right) \geq t\right\}}{N_I(A \Lambda_q, \tau)}
\end{equation*}
as $\tau \to \infty$ exists for all $t \in \RR$, and is equal to
\begin{equation*}
\FareyStat_F(t) = m_q\left(\mathbbm{1}_{F \geq t}\right),
\end{equation*}
where $\mathbbm{1}_{F \geq t}$ is the indicator function of the subset
\begin{equation*}
\left\{(a, b) \in \mathscr{T}^q \mid F(a,b) \geq t\right\}
\end{equation*}
of $\mathscr{T}^q$, and $m_q$ is the Lebesgue probability measure $dm_q = \frac{2}{\lambda_q} dadb$ on $\mathscr{T}^q$.
\end{proposition}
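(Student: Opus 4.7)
The plan is to realize $\FareyStat_{F,\tau}^{A\Lambda_q, I}(t)$ as the integral of the indicator $\mathbbm{1}_{F \geq t}$ against the empirical measure $\rho_{A\Lambda_q, I, \tau}$ defined in \cref{subsection: limiting distributions of FTR}, and then transport the limit using the weak convergence $\rho_{A\Lambda_q, I, \tau} \rightharpoonup m_q$ from \cref{theorem: weak limit and asymptotic growth}. Unpacking definitions, with $\mathcal{F}_I(A\Lambda_q, \tau) = \{\mathbf{u}_i\}_{i=0}^{N_I(A\Lambda_q, \tau) - 1}$, one has
\[
\FareyStat_{F, \tau}^{A\Lambda_q, I}(t) = \frac{1}{N_I(A\Lambda_q, \tau)} \sum_{i=0}^{N_I(A\Lambda_q, \tau)-1} \mathbbm{1}_{F \geq t}\bigl(\FTR_q(A, \tau, \mathbf{u}_i)\bigr) = \rho_{A\Lambda_q, I, \tau}\bigl(\mathbbm{1}_{F \geq t}\bigr),
\]
while the target value is $\FareyStat_F(t) = m_q\bigl(\mathbbm{1}_{F \geq t}\bigr)$.

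The main obstacle is that $\mathbbm{1}_{F \geq t}$ is not continuous, so weak convergence does not transfer the integrals directly. I would invoke the Portmanteau theorem, which implies that $\lim_{\tau \to \infty} \rho_{A\Lambda_q, I, \tau}(\mathbbm{1}_{E}) = m_q(\mathbbm{1}_E)$ for every Borel set $E \subseteq \mathscr{T}^q$ whose topological boundary $\partial E$ is $m_q$-null. Setting $E_t := \{(a, b) \in \mathscr{T}^q \mid F(a, b) \geq t\}$, the boundary $\partial E_t$ is contained in the union of the discontinuity locus $D_F := \bigcup_{i=1}^m c_i(I_i)$ of $F$ with the level set $\{F = t\}$. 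Each $c_i(I_i)$ is the $C^1$ image of a compact interval into $\RR^2$, hence $1$-rectifiable with two-dimensional Lebesgue measure zero, so $m_q(D_F) = 0$.

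It remains to handle the level set $\{F = t\}$. The function $\FareyStat_F : \RR \to [0, 1]$ is monotone non-increasing in $t$, so it has at most countably many discontinuities, and $m_q(\{F = t\}) = 0$ at every continuity point of $\FareyStat_F$; at every such $t$ the Portmanteau argument above applies and delivers the claim. At the remaining countably many jump points, I would use the monotonicity of both $\FareyStat_{F,\tau}^{A\Lambda_q, I}$ and $\FareyStat_F$ in $t$ to sandwich $\FareyStat_{F,\tau}^{A\Lambda_q, I}(t)$ between values evaluated at sequences of continuity points approaching $t$ from each side, and take limits to recover the asserted value up to the one-sided convention defining $\FareyStat_F$. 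In the applications that follow (the slope gap and $G_q$-Ford-circle distance distributions) the level sets $\{F = t\}$ are themselves $1$-rectifiable curves, so the boundary condition is met for every $t \in \RR$ and the convergence holds as stated.
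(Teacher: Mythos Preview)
Your approach is correct and close in spirit to the paper's: both identify $\FareyStat_{F,\tau}^{A\Lambda_q, I}(t) = \rho_{A\Lambda_q, I, \tau}(\mathbbm{1}_{F\geq t})$ and pass to the limit via the weak convergence of \cref{theorem: weak limit and asymptotic growth}. The Portmanteau mechanism differs slightly. The paper sets $C=\bigcup_i c_i(I_i)$ and $B_t=\{F\geq t\}\cup C$, notes that $B_t$ and $C$ are closed (so their indicators are bounded upper semicontinuous), uses $m_q(C)=0$ to get $\lim_\tau\rho_{A\Lambda_q,I,\tau}(\mathbbm{1}_C)=0$, and then concludes from $|\mathbbm{1}_{\{F\geq t\}}-\mathbbm{1}_{B_t}|\leq\mathbbm{1}_C$. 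You instead invoke the continuity-set form of Portmanteau directly, via the inclusion $\partial\{F\geq t\}\subseteq C\cup\{F=t\}$.

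Your explicit attention to the level set $\{F=t\}$ is well placed: a clean two-sided limit really does require $m_q(\{F=t\})=0$, and the paper's argument tacitly needs this too, since upper semicontinuity of $\mathbbm{1}_{B_t}$ alone yields only $\limsup_\tau\rho_{A\Lambda_q,I,\tau}(\mathbbm{1}_{B_t})\leq m_q(\mathbbm{1}_{B_t})$; the matching lower bound comes from the open set $\{F>t\}\setminus C$ and hence depends on $m_q(\{F=t\})=0$. Your monotone sandwich at the (at most countably many) exceptional $t$ only pins the limit between $m_q(\{F>t\})$ and $m_q(\{F\geq t\})$, so it does not by itself recover the exact stated value there; but your closing remark---that for $R_q$ and the Ford-circle distance function the level sets are themselves finite unions of $C^1$ arcs, hence $m_q$-null for every $t$---is exactly the right way to close the gap in the applications.
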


\begin{proof}
Fix $t \in \RR$. We then have
\begin{eqnarray*}
\FareyStat_{F,\tau}^{A\Lambda_q,I}(t)
 &=& \frac{\# \left\{\mathbf{u} \in \mathcal{F}_I(A \Lambda_q, \tau) \mid F \left(\FTR_q(A, \tau, \mathbf{u})\right) \geq t\right\}}{N_I(A\Lambda_q, \tau)} \\
 &=& \frac{1}{N_I(A \Lambda_q, \tau)} \sum_{i=0}^{N_I(A \Lambda_q, \tau) - 1} \mathbbm{1}_{F \geq t}\left(\FTR_q(A, \tau, \mathbf{u}_i\right) \\
 &=& \rho_{A \Lambda_q, I, \tau}\left(\mathbbm{1}_{F \geq t}\right),
\end{eqnarray*}
and so we proceed to show that $\lim_{\tau \to \infty} \rho_{A \Lambda_q, I, \tau} \left(\mathbbm{1}_{F \geq t}\right) = m_q\left(\mathbbm{1}_{F \geq t}\right)$.

Consider the following sets
\begin{eqnarray*}
A_t &=& \{(a, b) \in \mathscr{T}^q \mid F(a,b) \geq t\}, \\
B_t &=& \{(a, b) \in \mathscr{T}^q \mid F(a, b) \geq t\} \cup \bigcup_{i=1}^m c_i(I_i), \text{ and} \\
C &=& \bigcup_{i=1}^m c_i(I_i).
\end{eqnarray*}
The set $C_t$ is null with respect to the measure $m_q$, and $A_t \Delta B_t \subseteq C$, and so $m_q(\mathbbm{1}_{A_t}) = m_q(\mathbbm{1}_{B_t})$. The sets $B_t$ and $C$ are closed, and so their indicator functions $\mathbbm{1}_{B_t}$ and $\mathbbm{1}_C$ are bounded, and upper semi-continuous. Theorem \ref{theorem: weak limit and asymptotic growth} gives $\lim_{\tau \to \infty} \rho_{A\Lambda_q, I, \tau}(\mathbbm{1}_{B_t}) = m_q(\mathbbm{1}_{B_t})$, and $\lim_{\tau \to \infty} \rho_{A\Lambda_q, I, \tau}(\mathbbm{1}_{C_t}) = m_q(\mathbbm{1}_{C_t}) = 0$. Since $|\mathbbm{1}_{A_t} - \mathbbm{1}_{B_t}| \leq \mathbbm{1}_C$ on all of $\mathscr{T}^q$, we get $\lim_{\tau \to \infty} \rho_{A\Lambda_q, I, \tau}(\mathbbm{1}_{A_t}) = m_q(\mathbbm{1}_{A_t})$.
\end{proof}

\subsubsection{Slope Gap Distribution}
\label{subsubsection: slopegap distribution}

Let $A \in \SL(2, \RR)$, and $\tau > 0$ be such that $\mathcal{F}(A\Lambda_q, \tau) \neq \emptyset$. Given two vectors $\mathbf{u}_0, \mathbf{u}_1 \in \mathcal{F}(A\Lambda_q, \tau)$ with consecutive slopes, we denote the difference between the slopes of $\mathbf{u}_0$ and $\mathbf{u}_1$ by $\slopegap(A\Lambda_q, \tau, \mathbf{u}_0) = \slope(\mathbf{u}_1) - \slope(\mathbf{u}_0)$. We have the following on the limiting distribution of $\slopegap$.

\begin{corollary}
\label{corollary: limiting distribution of slopegap}
Let $A \in \SL(2, \RR)$, $I \subset \RR$ be a finite interval. The limit of
\begin{equation*}
\SlopeGap_\tau^{A\Lambda_q, I}(t) := \frac{\# \left\{\mathbf{u} \in \mathcal{F}_I(A\Lambda_q, \tau) \mid \tau^2 \slopegap(A\Lambda_q, \tau, \mathbf{u}) \geq t\right\}}{N_I(A\Lambda_q, \tau)}
\end{equation*}
as $\tau \to \infty$ exists for all $t \in \RR$, and is equal to $m_q(\mathbbm{1}_{R_q \geq t})$, where $m_q = \frac{2}{\lambda_q} da db$ is the Lebesgue probability measure on the $G_q$-Farey triangle $\mathscr{T}^q$.
\end{corollary}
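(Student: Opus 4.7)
My plan is to realize $\SlopeGap$ as an instance of the $\FareyStat$ machinery from \cref{proposition: limiting distributions of Farey statistics}, with $F = R_q$, by means of the identity supplied by the next-term algorithm. Concretely, I would enumerate $\mathcal{F}(A\Lambda_q,\tau) = \{\mathbf{u}_n\}$ in increasing order of slope and invoke \cref{theorem: next term algorithm}, which in the course of its proof establishes that for consecutive $\mathbf{u}_n, \mathbf{u}_{n+1}$,
\begin{equation*}
R_q\bigl(\FTR_q(A,\tau,\mathbf{u}_n)\bigr) \;=\; \tau^2\bigl(\slope(\mathbf{u}_{n+1}) - \slope(\mathbf{u}_n)\bigr) \;=\; \tau^2\,\slopegap(A\Lambda_q,\tau,\mathbf{u}_n).
\end{equation*}
Plugging this into the definition of $\SlopeGap_\tau^{A\Lambda_q, I}(t)$ immediately gives
\begin{equation*}
\SlopeGap_\tau^{A\Lambda_q, I}(t) \;=\; \FareyStat_{R_q,\tau}^{A\Lambda_q,I}(t).
\end{equation*}

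Next, I would verify that $R_q$ satisfies the hypothesis of \cref{proposition: limiting distributions of Farey statistics}. From the explicit formula in \cref{theorem: G_q BCZ maps}, on each subregion $\mathscr{T}_i^q$ the roof function $R_q(a,b) = y_i^q / \bigl(a\cdot (a,b)^T\!\cdot\mathfrak{w}_i^q\bigr)$ is a rational function in $(a,b)$ with denominator bounded away from $0$ on $\mathscr{T}_i^q$, hence continuous there. The subregions $\mathscr{T}_i^q$, $i = 2,\ldots,q-1$, partition $\mathscr{T}^q$ with boundaries lying on the finitely many line segments $\mathcal{L}_i^{q,\text{top}} \cap \mathscr{T}^q$, each a $C^1$ curve. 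Thus $R_q$ is continuous on $\mathscr{T}^q$ except on the image of finitely many $C^1$ curves.

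With the hypothesis verified, \cref{proposition: limiting distributions of Farey statistics} applies and yields
\begin{equation*}
\lim_{\tau \to \infty} \SlopeGap_\tau^{A\Lambda_q,I}(t) \;=\; \lim_{\tau \to \infty} \FareyStat_{R_q,\tau}^{A\Lambda_q,I}(t) \;=\; m_q\bigl(\mathbbm{1}_{R_q \geq t}\bigr),
\end{equation*}
which is precisely the asserted identity. The main obstacle is really book-keeping rather than conceptual: I have to be careful that the "next" vector producing the slope gap is taken in all of $\mathcal{F}(A\Lambda_q,\tau)$ (as the definition of $\slopegap$ requires), not merely in $\mathcal{F}_I(A\Lambda_q,\tau)$, so that the next-term identity above applies without modification. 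Since \cref{theorem: next term algorithm} is formulated globally over successive slopes in $A\Lambda_q \cap S_\tau$, this is automatic, and the at-most-one endpoint anomaly at the right end of $I$ contributes a single term that is absorbed into the $o(N_I(A\Lambda_q,\tau))$ error.
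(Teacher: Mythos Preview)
Your approach is essentially identical to the paper's: both reduce to $\SlopeGap_\tau^{A\Lambda_q,I}(t) = \FareyStat_{R_q,\tau}^{A\Lambda_q,I}(t)$ via the next-term identity $\tau^2\,\slopegap(A\Lambda_q,\tau,\mathbf{u}_n) = R_q(\FTR_q(A,\tau,\mathbf{u}_n))$, and then invoke \cref{proposition: limiting distributions of Farey statistics}. Your version is in fact slightly more thorough---you explicitly verify the piecewise-continuity hypothesis on $R_q$ and flag the endpoint bookkeeping---whereas the paper simply asserts the identity and applies the proposition without further comment.
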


\begin{proof}
Let $\tau > 0$ be such that $\mathcal{F}_I(A\Lambda_q, \tau) = \{\mathbf{u}_n = (q_n, a_n)^T\}_{n=0}^{N_I(A\Lambda_q, \tau) - 1} \neq \emptyset$. For any $0 \leq n \leq N_I(A\Lambda_q, \tau) - 2$, we have by \cref{theorem: next term algorithm} that
\begin{equation*}
\slopegap(A\Lambda_q, \tau, \mathbf{u}_n) = \frac{a_{n+1}}{q_{n+1}} - \frac{a_n}{q_n} = \frac{1}{\tau^2} R_q(\FTR_q(A\Lambda_q, \tau, \mathbf{u}_n)).
\end{equation*}
This implies that $\SlopeGap_\tau^{A\Lambda_q,I}(t) = \FareyStat_{R_q,\tau}^{A\Lambda_q,I}(t)$, and the proposition then follows from \cref{proposition: limiting distributions of Farey statistics}.
\end{proof}

\subsubsection{The $G_q$-Ford Circles, and Their Geometric Statistics}
\label{subsubsection: centdist distribution}

\begin{figure}
\centering
\includegraphics[scale=0.45]{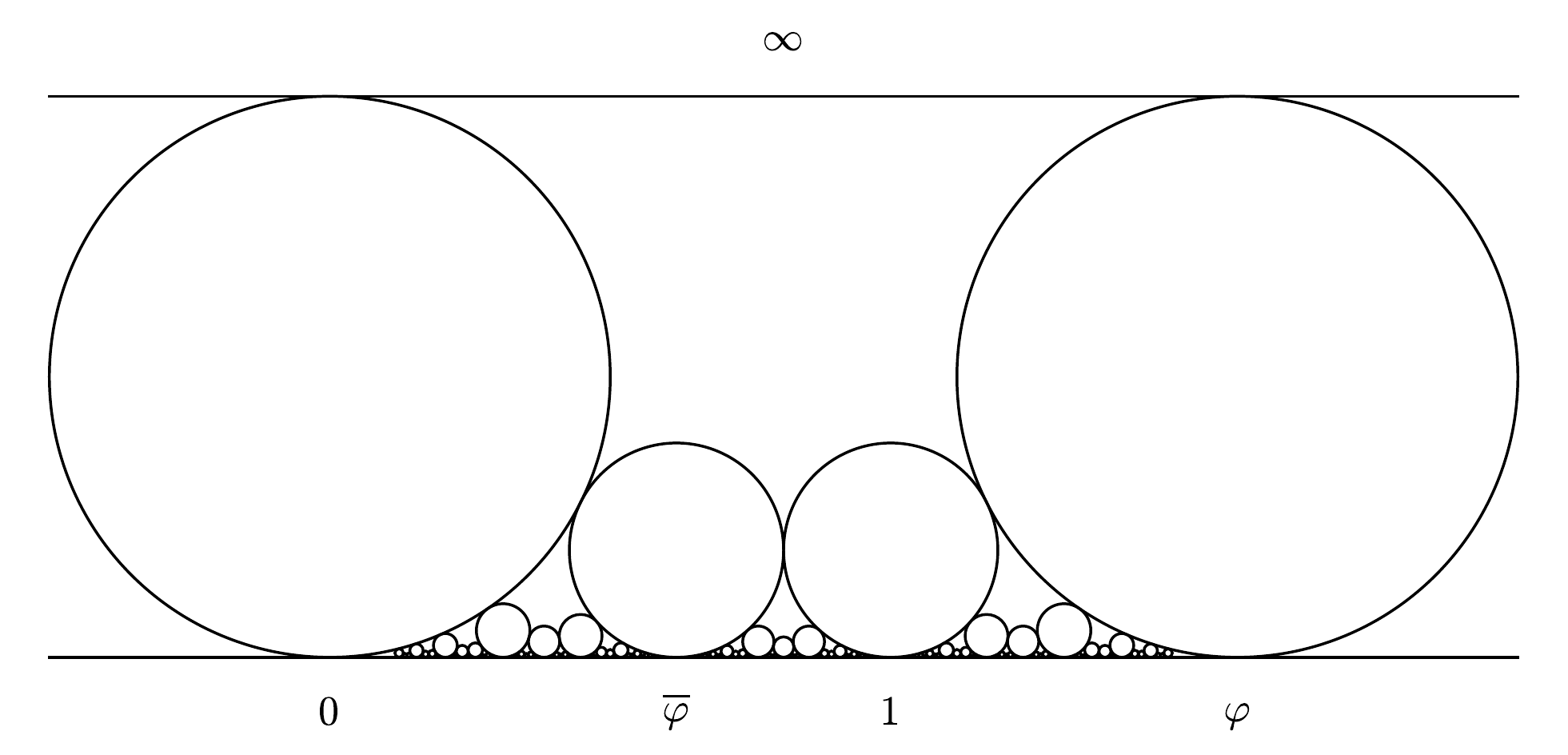}
\caption{The $G_5$-Ford circles corresponding to the vectors in $\mathcal{F}_{[0, \varphi]}(\Lambda_5, \infty)$ along with the circle at infinity. The circles tangent to the line $y=0$ at $\overline{\varphi}$, $1$, and $\varphi$ are the $G_5$-Stern-Brocot children of the circles at $0$ and infinity.}
\label{figure: G_5 Ford circles}
\end{figure}

For any point $\mathbf{w} = (r,s)^T \in \RR^2$, the \emph{Ford circle} $C[\mathbf{w}]$ \cite{Ford1938-ya} corresponding to $\mathbf{w}$ is defined to be either
\begin{itemize}
\item the circle with radius $\frac{1}{2r^2}$, and center at $\left(\frac{s}{r}, \frac{1}{2r^2}\right)$, if $r \neq 0$, or
\item the straight line $y = s^2$, if $r = 0$.
\end{itemize}
It is well-known that for any two vectors $\mathbf{w}_1, \mathbf{w}_2 \in \RR^2$, the Ford circles $C[\mathbf{w}_1]$ and $C[\mathbf{w}_2]$ intersect if $|\mathbf{w}_0 \wedge \mathbf{w}_1| < 1$, are tangent if $|\mathbf{w}_0 \wedge \mathbf{w}_1| = 1$, and are wholly external if $|\mathbf{w}_0 \wedge \mathbf{w}_1| > 1$.

It follows from \cref{theorem: G_q Stern Brocot process is well-defined and exhaustive} and \cref{corollary: odds and ends} that for any $A \in \SL(2, \RR)$, the Ford circles corresponding to any two distinct elements of $A\Lambda_q \cap S_\infty$ are either tangent or wholly external, and that the $G_q$-Stern-Brocot children of any two unimodular vectors of $A\Lambda_q \cap S_\infty$ correspond to a chain of $q - 2$ tangent circles between the two circles corresponding to the ``parents''.

Let $A \in SL(2, \RR)$, and $\tau > 0$ be such that $\mathcal{F}(A \Lambda_q, \tau) \neq \emptyset$. Given two vectors $\mathbf{u}_0, \mathbf{u}_1 \in \mathcal{F}(A \Lambda_q, \tau)$ with consecutive slopes, we denote the distance between the centers of $C[\mathbf{u}_0]$ and $C[\mathbf{u}_1]$ by $\centdist(A \Lambda_q, \tau, \mathbf{u}_0)$. We have the following on the limiting distribution of $\centdist$, extending a result from \cite{Athreya2015-gk} for $G_3$-Ford circles.

\begin{corollary}
\label{corollary: limiting distribution of centdist}
Let $A \in \SL(2, \RR)$, and $I \subset \RR$ be a finite interval. The limit of
\begin{equation*}
\CentDist_\tau^{A\Lambda_q,I}(t) := \frac{\# \left\{\mathbf{u} \in \mathcal{F}_I(A\Lambda_q, \tau) \mid \tau^2 \centdist(A\Lambda_q, \tau, \mathbf{u}) \geq t\right\}}{N_I(A\Lambda_q, \tau)}
\end{equation*}
as $\tau \to \infty$ exists for all $t \in \RR$, and is equal to $m_q(\mathbbm{1}_{F_q \geq t})$, where $m_q = \frac{2}{\lambda_q}dadb$ is the Lebesgue probability measure on the $G_q$-Farey triangle $\mathscr{T}^q$, and $F_q : \mathscr{T}^q \to \RR$ is the function defined by
\begin{equation*}
F_q(a, b) = \sqrt{R_q(a, b)^2 + \frac{1}{4} \left(\frac{1}{L_1^q(a, b)^2} - \frac{1}{L_0^q(a, b)^2}\right)^2},
\end{equation*}
where $L_0^q$ and $L_1^q$ are as in \cref{theorem: next term algorithm}.
\end{corollary}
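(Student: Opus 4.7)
The plan is to reduce the statement to \cref{proposition: limiting distributions of Farey statistics} by establishing the identity
\begin{equation*}
\tau^2 \centdist(A\Lambda_q, \tau, \mathbf{u}_n) = F_q\bigl(\FTR_q(A, \tau, \mathbf{u}_n)\bigr)
\end{equation*}
for every $\mathbf{u}_n \in \mathcal{F}_I(A\Lambda_q, \tau)$ that admits a successor $\mathbf{u}_{n+1}$, and by checking the regularity hypothesis on $F_q$ required by that proposition. Once those two steps are in place, $\CentDist_\tau^{A\Lambda_q, I}(t)$ and $\FareyStat_{F_q, \tau}^{A\Lambda_q, I}(t)$ agree up to the single terminal vector of $\mathcal{F}_I(A\Lambda_q, \tau)$ having no successor, whose contribution is $O(1/N_I(A\Lambda_q, \tau)) = o(1)$ by \cref{theorem: weak limit and asymptotic growth}.

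First I would expand the Euclidean distance between the Ford circle centers $(a_n/q_n, 1/(2q_n^2))$ and $(a_{n+1}/q_{n+1}, 1/(2q_{n+1}^2))$ as
\begin{equation*}
\centdist(A\Lambda_q, \tau, \mathbf{u}_n) = \sqrt{\bigl(\slope(\mathbf{u}_{n+1}) - \slope(\mathbf{u}_n)\bigr)^2 + \left(\frac{1}{2q_{n+1}^2} - \frac{1}{2q_n^2}\right)^2}.
\end{equation*}
I would then invoke \cref{theorem: next term algorithm} to translate each ingredient into a quantity on $\mathscr{T}^q$ evaluated at the base point $\FTR_q(A, \tau, \mathbf{u}_n)$: the horizontal gap equals $R_q/\tau^2$ (the same identity that drives \cref{corollary: limiting distribution of slopegap}), while $q_n = \tau L_0^q$ and $q_{n+1} = \tau L_1^q$. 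Substituting these three identities and pulling a factor of $1/\tau^4$ out of both summands produces exactly the claimed expression for $F_q$.

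Next I would verify that $F_q$ is continuous on $\mathscr{T}^q$ off a finite union of $C^1$ curves. By the explicit formulas in \cref{theorem: G_q BCZ maps}, on each subregion $\mathscr{T}_i^q$ with $i = 2, 3, \ldots, q - 1$, the roof $R_q$ and the first coordinate $L_1^q$ of $\BCZ_q$ are smooth rational functions of $(a, b)$, with denominators $a$ and $(a, b)^T \cdot \mathfrak{w}_i^q$ bounded away from $0$ on the closure of each subregion; the remaining denominator $L_0^q(a, b) = a$ is likewise bounded below throughout $\mathscr{T}^q$. Hence $F_q$ is smooth on the interior of each $\mathscr{T}_i^q$, and its discontinuity set is contained in the finite union of line segments $\bigcup_{i = 1}^{q - 2} \mathcal{L}_i^{q, \text{top}} \cap \mathscr{T}^q$ that partition $\mathscr{T}^q$ into the pieces $\mathscr{T}_i^q$. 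This is exactly the hypothesis of \cref{proposition: limiting distributions of Farey statistics}, and applying that proposition to $F_q$ delivers the stated limit $m_q(\mathbbm{1}_{F_q \geq t})$.

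The main obstacle, such as it is, is the bookkeeping in the identification step: the two Ford circles attached to $\mathbf{u}_n$ and $\mathbf{u}_{n+1}$ are encoded via the $0$th and $1$st $\BCZ_q$-iterates of the same base point $\FTR_q(A, \tau, \mathbf{u}_n)$, and one has to keep the powers of $\tau$ straight so that the rescaling $\tau^2 \centdist$ collapses to a function of $(a, b) \in \mathscr{T}^q$ alone. The regularity check and the final appeal to \cref{proposition: limiting distributions of Farey statistics} are then mechanical.
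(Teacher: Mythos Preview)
Your proposal is correct and follows essentially the same route as the paper: expand $\centdist$ via the Ford-circle centers, use \cref{theorem: next term algorithm} to rewrite the slope gap as $R_q/\tau^2$ and the $x$-components as $\tau L_0^q, \tau L_1^q$, conclude $\tau^2\centdist = F_q\circ\FTR_q$, and then invoke \cref{proposition: limiting distributions of Farey statistics}. You are in fact slightly more careful than the paper, which neither spells out the regularity check on $F_q$ nor mentions the $o(1)$ discrepancy from the terminal vector.
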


As an immediate consequence of the second claim in \cref{corollary: odds and ends}, we get the following weak form of Dirichelet's approximation theorem for $\Lambda_q$.

\begin{proposition}
\label{proposition: weak Dirichelet approximation}
Let $A \in \SL(2, \RR)$, and $\alpha \in \RR$. The line $x = \alpha$ either passes through the center of a $G_q$-Ford circle corresponding to a vector in $A \Lambda_q$, or there exist infinitely many vectors in $A \Lambda_q$ whose Ford circles intersect $x = \alpha$. In particular, $\alpha$ is either the slope of a vector in $A \Lambda_q$, or there exist infinitely many $(q, a)^T \in A \Lambda_q$ such that
\begin{equation*}
\left|\alpha - \frac{a}{q}\right| \leq \frac{1}{2q^2}.
\end{equation*}
\end{proposition}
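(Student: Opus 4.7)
The plan is to reduce to a bracketing statement on $\Lambda_q$, iterate the second claim of \cref{corollary: odds and ends}, and then read the inequality off from the geometry of tangent $G_q$-Ford circles. Set $\mathbf{v} = (1, \alpha)^T$. If $\mathbf{v}$ is parallel to some $(q, a)^T \in A\Lambda_q$ (taking $q > 0$ after a sign flip, by the $(x, y) \mapsto (-x, -y)$ symmetry of $\Lambda_q$), then $\alpha = a/q$ and the line $x = \alpha$ already passes through the center $(a/q, 1/(2q^2))$ of the corresponding $G_q$-Ford circle; this is the first alternative. So I assume henceforth that $\mathbf{v}$ is parallel to no vector in $A\Lambda_q$, equivalently that $A^{-1}\mathbf{v}$ is parallel to no vector in $\Lambda_q$.

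Next I would produce infinitely many unimodular pairs in $A\Lambda_q$ bracketing $\mathbf{v}$ along which the norms diverge. Since $A \in \SL(2, \RR)$ preserves wedge products, it suffices to carry this out in $\Lambda_q$ with $A^{-1}\mathbf{v}$ and push the resulting pairs forward by $A$. The $2q$ open sectors cut out by consecutive $\mathfrak{w}_i^q$ partition $\RR^2 \setminus \{0\}$, and consecutive pairs are unimodular by \cref{proposition: column - unimodular pair identification} together with the $U_q$-equivariance of the wedge product, so an initial unimodular pair whose open sector contains $A^{-1}\mathbf{v}$ exists. The second claim of \cref{corollary: odds and ends} then lets me replace any bracketing pair by a strictly deeper pair of $G_q$-Stern-Brocot grandchildren that still brackets $A^{-1}\mathbf{v}$; iterating yields an infinite sequence of bracketing pairs, and the third claim of the same corollary guarantees that both components of the $n$th pair have norm tending to infinity.

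The last step is to convert each bracketing pair $(q_0, a_0)^T, (q_1, a_1)^T \in A\Lambda_q$ (normalized to $q_0, q_1 > 0$) into a $G_q$-Ford circle meeting $x = \alpha$. Unimodularity forces the slope gap $|a_1/q_1 - a_0/q_0| = 1/(q_0 q_1)$, while the horizontal projections of the two Ford circles onto the $x$-axis are the closed intervals of radius $1/(2 q_j^2)$ about $a_j/q_j$. The AM--GM inequality $1/(2 q_0^2) + 1/(2 q_1^2) \geq 1/(q_0 q_1)$ shows that these two intervals together cover the slope gap, so $\alpha$ lies within $1/(2 q_j^2)$ of $a_j/q_j$ for at least one $j \in \{0, 1\}$; this is precisely the Ford circle at $(q_j, a_j)^T$ meeting $x = \alpha$, together with the desired Dirichlet-type inequality $|\alpha - a_j/q_j| \leq 1/(2 q_j^2)$, and diverging norms deliver infinitely many distinct such approximants. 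The substantive ingredient is the cited second claim of \cref{corollary: odds and ends}, which I am free to invoke; beyond that the main computational ``obstacle'' is the trivial AM--GM estimate showing that the horizontal projections of two tangent $G_q$-Ford circles cover the interval between their tangency points on the $x$-axis.
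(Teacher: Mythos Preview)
Your proof is correct and follows exactly the route the paper indicates: the paper offers no detailed argument, merely declaring the proposition an ``immediate consequence of the second claim in \cref{corollary: odds and ends}'', and your iteration of that claim (together with the third claim for diverging norms) followed by the AM--GM covering argument for tangent Ford circles is precisely the intended elaboration. The only cosmetic point is that the normalization $q_0, q_1 > 0$ is not literally a sign flip but rather holds automatically once the bracketing sector has narrowed enough around $(1,\alpha)^T$, which happens after finitely many iterations since the angular width shrinks to zero.
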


\begin{figure}
\label{figure: Euclidean distance distribution}
\centering
\includegraphics[scale=0.75]{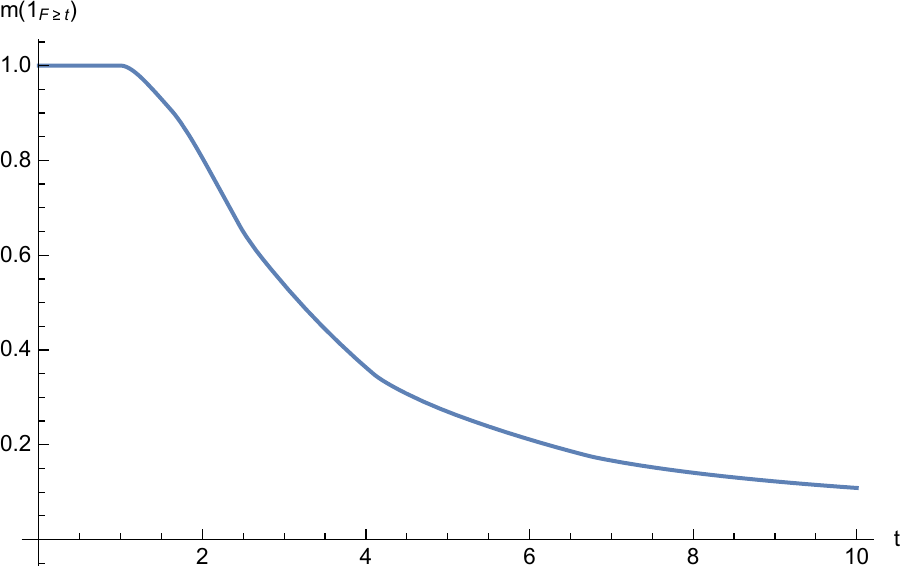}
\caption{The graph of the limiting distribution $\lim_{t \to \infty} \CentDist_\tau^{\Lambda_5,I}(t) = m_5(\mathbbm{1}_{F_5 \geq t})$ of the Euclidean distance between successive $G_5$-Ford circles from \cref{corollary: limiting distribution of centdist}.}
\end{figure}

\begin{proof}
Let $\tau > 0$ be such that $\mathcal{F}_I(A\Lambda_q, \tau) = \{\mathbf{u}_n = (q_n, a_n)^T\}_{n=0}^{N_I(A\Lambda_q, \tau) - 1} \neq \emptyset$. For any $0 \leq n \leq N_I(A\Lambda_q, \tau) - 2$, we have by \cref{theorem: next term algorithm} that
\begin{equation*}
\frac{a_{n+1}}{q_{n+1}} - \frac{a_n}{q_n} = \frac{1}{\tau^2} R_q(\FTR_q(A\Lambda_q, \tau, \mathbf{u}_n)),
\end{equation*}
and
\begin{equation*}
\frac{1}{2q_{n+1}^2} - \frac{1}{2q_n^2} = \frac{1}{2 \tau^2 L_1^q(\FTR_q(A\Lambda_q, \tau, \mathbf{u}_n))^2} - \frac{1}{2 \tau^2 L_0^q(\FTR_q(A\Lambda_q, \tau, \mathbf{u}_n))^2}.
\end{equation*}
From this follows that the distance between the centers of $C[\mathbf{u}_n]$ and $C[\mathbf{u}_{n+1}]$ is given by
\begin{eqnarray*}
\centdist(A\Lambda_q, \tau, \mathbf{u}_n) &=& \sqrt{\left(\frac{a_{n+1}}{q_{n+1}} - \frac{a_n}{q_n}\right)^2 + \left(\frac{1}{2q_{n+1}^2} - \frac{1}{2q_n^2}\right)^2} \\
 &=& \frac{1}{\tau^2} F_q(\FTR_q(A\Lambda_q, \tau, \mathbf{u}_n)).
\end{eqnarray*}
This implies that $\CentDist_\tau^{A\Lambda_q,I}(t) = \FareyStat_{F_q,\tau}^{A\Lambda_q,I}(t)$, and the proposition then follows from \cref{proposition: limiting distributions of Farey statistics}.
\end{proof}

\end{document}